\documentclass[12pt]{amsart}

\usepackage{amsmath}
\usepackage{amsfonts}
\usepackage{latexsym}
\usepackage{graphicx}
\usepackage{amssymb}
\usepackage{amsthm}
\usepackage[margin=2cm]{geometry}
\usepackage{url}
\usepackage{color}
\usepackage{enumerate}
\usepackage[shortlabels]{enumitem} 
\usepackage[small]{caption}
\usepackage{cite}       
\usepackage{pgfplots}
\usepackage{longtable}
\usepackage{stmaryrd}



\usepackage{algorithm}     
\usepackage[noend]{algpseudocode}
\algrenewcommand\algorithmicrequire{\textbf{Precondition:}}
\algrenewcommand\algorithmicensure{\textbf{Postcondition:}}

\usepackage{todonotes}


\usepackage{hyperref}

\usetikzlibrary{patterns}
\usetikzlibrary{arrows}

\usepackage[T1]{fontenc}     
\usepackage{lmodern}         
\usepackage[utf8]{inputenc}  


\newtheorem{definition}{Definition}
\newtheorem{lemma}[definition]{Lemma}
\newtheorem{proposition}[definition]{Proposition}

\newtheorem{corollary}[definition]{Corollary}

\newtheorem{theorem}[definition]{Theorem}
\newtheorem{remark}[definition]{Remark}


\newcommand{\N}{\mathbb{N}}
\newcommand{\Z}{\mathbb{Z}}

\newcommand{\R}{\mathbb{R}}

\newcommand{\A}{\mathcal{A}}
\newcommand{\B}{\mathcal{B}}

\newcommand{\F}{\mathcal{F}}

\renewcommand{\S}{\mathcal{S}}
\newcommand{\T}{\mathcal{T}}
\newcommand{\U}{\mathcal{U}}
\newcommand{\V}{\mathcal{V}}
\newcommand{\W}{\mathcal{W}}
\newcommand{\X}{\mathcal{X}}
\renewcommand{\L}{\mathcal{L}}
\newcommand{\ba}{\mathbf{a}}
\newcommand{\be}{\mathbf{e}}
\newcommand{\bk}{\mathbf{k}}
\newcommand{\bn}{\mathbf{n}}
\newcommand{\bm}{\mathbf{m}}
\newcommand{\bp}{\mathbf{p}}
\newcommand{\bu}{\mathbf{u}}
\newcommand{\bx}{\mathbf{x}}
\newcommand{\zero}{\mathbf{0}}

\newcommand{\shape}{\textsc{shape}}
\newcommand{\SFT}{\textsc{SFT}}
\newcommand{\sctop}{\textsc{top}}
\newcommand{\scbottom}{\textsc{bottom}}
\newcommand{\scleft}{\textsc{left}}
\newcommand{\scright}{\textsc{right}}

\newcommand\tile[4]{
    \raisebox{-3mm}{
\begin{tikzpicture}[scale=0.9]
\draw (0, 0) -- (1, 0);
\draw (0, 0) -- (0, 1);
\draw (1, 1) -- (1, 0);
\draw (1, 1) -- (0, 1);
\node[rotate=0,font=\footnotesize] at (0.8, 0.5) {#1};
\node[rotate=0,font=\footnotesize] at (0.5, 0.8) {#2};
\node[rotate=0,font=\footnotesize] at (0.2, 0.5) {#3};
\node[rotate=0,font=\footnotesize] at (0.5, 0.2) {#4};
\end{tikzpicture}}}

\newcommand\unitsquare[1]{
\begin{tikzpicture}[scale=.2]
\draw[fill=#1] (0,0) rectangle (1,1);
\end{tikzpicture}}

\newcommand\dominoV[3]{
\begin{tikzpicture}[scale=.2]
\draw[fill=#1] (0,0) rectangle (1,1);
\draw[fill=#2] (0,1) rectangle (1,2);
\foreach \x/\y in {#3}
{\draw[thick] (\x,\y) -- (\x+1,\y+1);
 \draw[thick] (\x,\y+1) -- (\x+1,\y);}
\end{tikzpicture}}
\newcommand\dominoH[3]{
\begin{tikzpicture}[scale=.2]
\draw[fill=#1] (0,0) rectangle (1,1);
\draw[fill=#2] (1,0) rectangle (2,1);
\foreach \x/\y in {#3}
{\draw[thick] (\x,\y) -- (\x+1,\y+1);
 \draw[thick] (\x,\y+1) -- (\x+1,\y);}
\end{tikzpicture}}

\def\p{1.61803398874989}   


\keywords{Wang tiles \and tilings \and aperiodic \and self-similar \and
recognizability}
\subjclass[2010]{Primary 52C23; Secondary 37B50}


\begin{document}

\title{A self-similar aperiodic set of 19 Wang tiles}
\author[S.~Labb\'e]{S\'ebastien Labb\'e}
\address[S.~Labb\'e]{CNRS, LaBRI, UMR 5800, F-33400 Talence, France}
\email{sebastien.labbe@labri.fr}

\date{\today}
\begin{abstract}
We define a Wang tile set $\mathcal{U}$ of cardinality 19 and show that the set $\Omega_\mathcal{U}$ of all valid Wang tilings $\mathbb{Z}^2\to\mathcal{U}$ is self-similar, aperiodic and is a minimal subshift of $\mathcal{U}^{\mathbb{Z}^2}$. Thus $\mathcal{U}$ is the second smallest self-similar aperiodic Wang tile set known after Ammann's set of 16 Wang tiles.  The proof is based on the unique composition property.  We prove the existence of an expansive, primitive and recognizable $2$-dimensional morphism $\omega:\Omega_\mathcal{U}\to\Omega_\mathcal{U}$ that is onto up to a shift. The proof of recognizability is done in two steps using at each step the same criteria (the existence of marker tiles) for proving the existence of a recognizable one-dimensional substitution that sends each tile either on a single tile or on a domino of two tiles.
\end{abstract}

\maketitle

\section{Introduction}

\emph{Wang tiles} are unit square tiles with colored edges
as in Figure~\ref{fig:the-tile-set-U}.
Given a finite set
of Wang tiles, we consider \emph{tilings} of the Euclidean plane using
arbitrarily many copies of the tiles. Tiles are placed on the integer lattice
points of the plane with their edges oriented horizontally and vertically. The
tiles may not be rotated. The tiling is \emph{valid} if every
contiguous edges have the same color.
Given a finite set $\T$ of Wang tiles, we denote by $\Omega_\T$ the set of all
valid tilings $f:\Z^2\to\T$. The set
$\Omega_\T$ is a $2$-dimensional \emph{subshift} as it
is invariant under translations and closed under taking limits.
Hence $\Omega_\T$ is called the \emph{Wang shift} of $\T$.
A nonempty Wang shift $\Omega_\T$ is \emph{aperiodic} if none of the tilings in
$\Omega_\T$ have a nontrivial period.
Chapters 10 and 11 of \cite{MR857454} and the more recent
book on aperiodic order \cite{MR3136260} give an excellent overview of what is
known on aperiodic tilings (in $\Z^d$ as in $\R^d$) together with their
applications to physics and crystallography.

\begin{figure}[h]
\begin{center}
\input{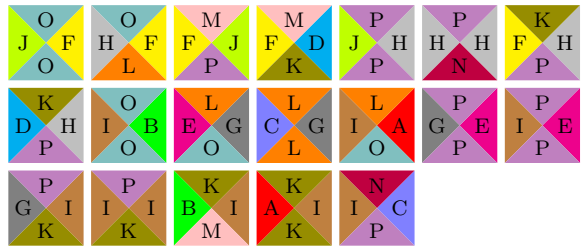}
\end{center}
    \caption{The set $\U$ of 19 Wang tiles.}
    \label{fig:the-tile-set-U}
\end{figure}

A nonexhaustive list of aperiodic Wang tile sets is shown in
Table~\ref{tab:list-aperiodic-sets}.  Examples of small aperiodic Wang tile
sets include Ammann's 16 tiles \cite[p. 595]{MR857454}, Kari's 14 tiles \cite{MR1417578}  and Culik's 13 tiles \cite{MR1417576}.  The
question of finding the smallest aperiodic set of Wang tiles was open until
Jeandel and Rao proved 
\cite{jeandel_aperiodic_2015}
the existence of an aperiodic set of 11 Wang tiles
and that no set of Wang tiles of cardinality $\leq10$ is aperiodic.
The proofs of their two results is based on transducers, which has proven to be
an excellent approach for the search of the smallest aperiodic Wang tile set.
A transducer is a finite-state machine with two memory tapes:
an input tape and an output tape. There is a natural way to construct a
transducer from a set of Wang tiles (see Figure~\ref{fig:T7transducer})
which computes some row of tiles that can be placed above some other row of
tiles.


\newcommand\UTileO{
\begin{tikzpicture}
[scale=0.900000000000000]
\tikzstyle{every node}=[font=\tiny]
\node at (0.5, 0.5) {0};
\draw (0, 0) -- ++ (0,1);
\draw (0, 0) -- ++ (1,0);
\draw (1, 0) -- ++ (0,1);
\draw (0, 1) -- ++ (1,0);
\node[rotate=0] at (0.800000000000000, 0.5) {F};
\node[rotate=0] at (0.5, 0.800000000000000) {O};
\node[rotate=0] at (0.200000000000000, 0.5) {J};
\node[rotate=0] at (0.5, 0.200000000000000) {O};
\end{tikzpicture}
} 
\newcommand\UTileI{
\begin{tikzpicture}
[scale=0.900000000000000]
\tikzstyle{every node}=[font=\tiny]
\node at (0.5, 0.5) {1};
\draw (0, 0) -- ++ (0,1);
\draw (0, 0) -- ++ (1,0);
\draw (1, 0) -- ++ (0,1);
\draw (0, 1) -- ++ (1,0);
\node[rotate=0] at (0.800000000000000, 0.5) {F};
\node[rotate=0] at (0.5, 0.800000000000000) {O};
\node[rotate=0] at (0.200000000000000, 0.5) {H};
\node[rotate=0] at (0.5, 0.200000000000000) {L};
\end{tikzpicture}
} 
\newcommand\UTileII{
\begin{tikzpicture}
[scale=0.900000000000000]
\tikzstyle{every node}=[font=\tiny]
\node at (0.5, 0.5) {2};
\draw (0, 0) -- ++ (0,1);
\draw (0, 0) -- ++ (1,0);
\draw (1, 0) -- ++ (0,1);
\draw (0, 1) -- ++ (1,0);
\node[rotate=0] at (0.800000000000000, 0.5) {J};
\node[rotate=0] at (0.5, 0.800000000000000) {M};
\node[rotate=0] at (0.200000000000000, 0.5) {F};
\node[rotate=0] at (0.5, 0.200000000000000) {P};
\end{tikzpicture}
} 
\newcommand\UTileIII{
\begin{tikzpicture}
[scale=0.900000000000000]
\tikzstyle{every node}=[font=\tiny]
\node at (0.5, 0.5) {3};
\draw (0, 0) -- ++ (0,1);
\draw (0, 0) -- ++ (1,0);
\draw (1, 0) -- ++ (0,1);
\draw (0, 1) -- ++ (1,0);
\node[rotate=0] at (0.800000000000000, 0.5) {D};
\node[rotate=0] at (0.5, 0.800000000000000) {M};
\node[rotate=0] at (0.200000000000000, 0.5) {F};
\node[rotate=0] at (0.5, 0.200000000000000) {K};
\end{tikzpicture}
} 
\newcommand\UTileIV{
\begin{tikzpicture}
[scale=0.900000000000000]
\tikzstyle{every node}=[font=\tiny]
\node at (0.5, 0.5) {4};
\draw (0, 0) -- ++ (0,1);
\draw (0, 0) -- ++ (1,0);
\draw (1, 0) -- ++ (0,1);
\draw (0, 1) -- ++ (1,0);
\node[rotate=0] at (0.800000000000000, 0.5) {H};
\node[rotate=0] at (0.5, 0.800000000000000) {P};
\node[rotate=0] at (0.200000000000000, 0.5) {J};
\node[rotate=0] at (0.5, 0.200000000000000) {P};
\end{tikzpicture}
} 
\newcommand\UTileV{
\begin{tikzpicture}
[scale=0.900000000000000]
\tikzstyle{every node}=[font=\tiny]
\node at (0.5, 0.5) {5};
\draw (0, 0) -- ++ (0,1);
\draw (0, 0) -- ++ (1,0);
\draw (1, 0) -- ++ (0,1);
\draw (0, 1) -- ++ (1,0);
\node[rotate=0] at (0.800000000000000, 0.5) {H};
\node[rotate=0] at (0.5, 0.800000000000000) {P};
\node[rotate=0] at (0.200000000000000, 0.5) {H};
\node[rotate=0] at (0.5, 0.200000000000000) {N};
\end{tikzpicture}
} 
\newcommand\UTileVI{
\begin{tikzpicture}
[scale=0.900000000000000]
\tikzstyle{every node}=[font=\tiny]
\node at (0.5, 0.5) {6};
\draw (0, 0) -- ++ (0,1);
\draw (0, 0) -- ++ (1,0);
\draw (1, 0) -- ++ (0,1);
\draw (0, 1) -- ++ (1,0);
\node[rotate=0] at (0.800000000000000, 0.5) {H};
\node[rotate=0] at (0.5, 0.800000000000000) {K};
\node[rotate=0] at (0.200000000000000, 0.5) {F};
\node[rotate=0] at (0.5, 0.200000000000000) {P};
\end{tikzpicture}
} 
\newcommand\UTileVII{
\begin{tikzpicture}
[scale=0.900000000000000]
\tikzstyle{every node}=[font=\tiny]
\node at (0.5, 0.5) {7};
\draw (0, 0) -- ++ (0,1);
\draw (0, 0) -- ++ (1,0);
\draw (1, 0) -- ++ (0,1);
\draw (0, 1) -- ++ (1,0);
\node[rotate=0] at (0.800000000000000, 0.5) {H};
\node[rotate=0] at (0.5, 0.800000000000000) {K};
\node[rotate=0] at (0.200000000000000, 0.5) {D};
\node[rotate=0] at (0.5, 0.200000000000000) {P};
\end{tikzpicture}
} 
\newcommand\UTileVIII{
\begin{tikzpicture}
[scale=0.900000000000000]
\tikzstyle{every node}=[font=\tiny]
\node at (0.5, 0.5) {8};
\draw (0, 0) -- ++ (0,1);
\draw (0, 0) -- ++ (1,0);
\draw (1, 0) -- ++ (0,1);
\draw (0, 1) -- ++ (1,0);
\node[rotate=0] at (0.800000000000000, 0.5) {B};
\node[rotate=0] at (0.5, 0.800000000000000) {O};
\node[rotate=0] at (0.200000000000000, 0.5) {I};
\node[rotate=0] at (0.5, 0.200000000000000) {O};
\end{tikzpicture}
} 
\newcommand\UTileIX{
\begin{tikzpicture}
[scale=0.900000000000000]
\tikzstyle{every node}=[font=\tiny]
\node at (0.5, 0.5) {9};
\draw (0, 0) -- ++ (0,1);
\draw (0, 0) -- ++ (1,0);
\draw (1, 0) -- ++ (0,1);
\draw (0, 1) -- ++ (1,0);
\node[rotate=0] at (0.800000000000000, 0.5) {G};
\node[rotate=0] at (0.5, 0.800000000000000) {L};
\node[rotate=0] at (0.200000000000000, 0.5) {E};
\node[rotate=0] at (0.5, 0.200000000000000) {O};
\end{tikzpicture}
} 
\newcommand\UTileX{
\begin{tikzpicture}
[scale=0.900000000000000]
\tikzstyle{every node}=[font=\tiny]
\node at (0.5, 0.5) {10};
\draw (0, 0) -- ++ (0,1);
\draw (0, 0) -- ++ (1,0);
\draw (1, 0) -- ++ (0,1);
\draw (0, 1) -- ++ (1,0);
\node[rotate=0] at (0.800000000000000, 0.5) {G};
\node[rotate=0] at (0.5, 0.800000000000000) {L};
\node[rotate=0] at (0.200000000000000, 0.5) {C};
\node[rotate=0] at (0.5, 0.200000000000000) {L};
\end{tikzpicture}
} 
\newcommand\UTileXI{
\begin{tikzpicture}
[scale=0.900000000000000]
\tikzstyle{every node}=[font=\tiny]
\node at (0.5, 0.5) {11};
\draw (0, 0) -- ++ (0,1);
\draw (0, 0) -- ++ (1,0);
\draw (1, 0) -- ++ (0,1);
\draw (0, 1) -- ++ (1,0);
\node[rotate=0] at (0.800000000000000, 0.5) {A};
\node[rotate=0] at (0.5, 0.800000000000000) {L};
\node[rotate=0] at (0.200000000000000, 0.5) {I};
\node[rotate=0] at (0.5, 0.200000000000000) {O};
\end{tikzpicture}
} 
\newcommand\UTileXII{
\begin{tikzpicture}
[scale=0.900000000000000]
\tikzstyle{every node}=[font=\tiny]
\node at (0.5, 0.5) {12};
\draw (0, 0) -- ++ (0,1);
\draw (0, 0) -- ++ (1,0);
\draw (1, 0) -- ++ (0,1);
\draw (0, 1) -- ++ (1,0);
\node[rotate=0] at (0.800000000000000, 0.5) {E};
\node[rotate=0] at (0.5, 0.800000000000000) {P};
\node[rotate=0] at (0.200000000000000, 0.5) {G};
\node[rotate=0] at (0.5, 0.200000000000000) {P};
\end{tikzpicture}
} 
\newcommand\UTileXIII{
\begin{tikzpicture}
[scale=0.900000000000000]
\tikzstyle{every node}=[font=\tiny]
\node at (0.5, 0.5) {13};
\draw (0, 0) -- ++ (0,1);
\draw (0, 0) -- ++ (1,0);
\draw (1, 0) -- ++ (0,1);
\draw (0, 1) -- ++ (1,0);
\node[rotate=0] at (0.800000000000000, 0.5) {E};
\node[rotate=0] at (0.5, 0.800000000000000) {P};
\node[rotate=0] at (0.200000000000000, 0.5) {I};
\node[rotate=0] at (0.5, 0.200000000000000) {P};
\end{tikzpicture}
} 
\newcommand\UTileXIV{
\begin{tikzpicture}
[scale=0.900000000000000]
\tikzstyle{every node}=[font=\tiny]
\node at (0.5, 0.5) {14};
\draw (0, 0) -- ++ (0,1);
\draw (0, 0) -- ++ (1,0);
\draw (1, 0) -- ++ (0,1);
\draw (0, 1) -- ++ (1,0);
\node[rotate=0] at (0.800000000000000, 0.5) {I};
\node[rotate=0] at (0.5, 0.800000000000000) {P};
\node[rotate=0] at (0.200000000000000, 0.5) {G};
\node[rotate=0] at (0.5, 0.200000000000000) {K};
\end{tikzpicture}
} 
\newcommand\UTileXV{
\begin{tikzpicture}
[scale=0.900000000000000]
\tikzstyle{every node}=[font=\tiny]
\node at (0.5, 0.5) {15};
\draw (0, 0) -- ++ (0,1);
\draw (0, 0) -- ++ (1,0);
\draw (1, 0) -- ++ (0,1);
\draw (0, 1) -- ++ (1,0);
\node[rotate=0] at (0.800000000000000, 0.5) {I};
\node[rotate=0] at (0.5, 0.800000000000000) {P};
\node[rotate=0] at (0.200000000000000, 0.5) {I};
\node[rotate=0] at (0.5, 0.200000000000000) {K};
\end{tikzpicture}
} 
\newcommand\UTileXVI{
\begin{tikzpicture}
[scale=0.900000000000000]
\tikzstyle{every node}=[font=\tiny]
\node at (0.5, 0.5) {16};
\draw (0, 0) -- ++ (0,1);
\draw (0, 0) -- ++ (1,0);
\draw (1, 0) -- ++ (0,1);
\draw (0, 1) -- ++ (1,0);
\node[rotate=0] at (0.800000000000000, 0.5) {I};
\node[rotate=0] at (0.5, 0.800000000000000) {K};
\node[rotate=0] at (0.200000000000000, 0.5) {B};
\node[rotate=0] at (0.5, 0.200000000000000) {M};
\end{tikzpicture}
} 
\newcommand\UTileXVII{
\begin{tikzpicture}
[scale=0.900000000000000]
\tikzstyle{every node}=[font=\tiny]
\node at (0.5, 0.5) {17};
\draw (0, 0) -- ++ (0,1);
\draw (0, 0) -- ++ (1,0);
\draw (1, 0) -- ++ (0,1);
\draw (0, 1) -- ++ (1,0);
\node[rotate=0] at (0.800000000000000, 0.5) {I};
\node[rotate=0] at (0.5, 0.800000000000000) {K};
\node[rotate=0] at (0.200000000000000, 0.5) {A};
\node[rotate=0] at (0.5, 0.200000000000000) {K};
\end{tikzpicture}
} 
\newcommand\UTileXVIII{
\begin{tikzpicture}
[scale=0.900000000000000]
\tikzstyle{every node}=[font=\tiny]
\node at (0.5, 0.5) {18};
\draw (0, 0) -- ++ (0,1);
\draw (0, 0) -- ++ (1,0);
\draw (1, 0) -- ++ (0,1);
\draw (0, 1) -- ++ (1,0);
\node[rotate=0] at (0.800000000000000, 0.5) {C};
\node[rotate=0] at (0.5, 0.800000000000000) {N};
\node[rotate=0] at (0.200000000000000, 0.5) {I};
\node[rotate=0] at (0.5, 0.200000000000000) {P};
\end{tikzpicture}
} 

In this contribution we introduce a set $\U$
of 19 Wang tiles (see Figure~\ref{fig:the-tile-set-U}).
The main result of the current contribution is that
$\U$ is aperiodic and that it generates
tilings (Figure~\ref{fig:U_image_5th_of4}) of $\Z^2$ with a self-similar
structure.
With 19 tiles, this makes $\U$ the second smallest self-similar aperiodic
set after Ammann's tile set which has 16 only tiles. 
The tile set $\U$ comes from the study of the structure
of Jeandel-Rao aperiodic tilings. The link with Jeandel-Rao tilings needs more
tools and space and will be done in a forthcoming paper
\cite{labbe_structure_2018}.
In this contribution, we prove the following
result.

\begin{theorem}\label{thm:main}
    The Wang shift $\Omega_\U$ is self-similar, aperiodic and minimal.
\end{theorem}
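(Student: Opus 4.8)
The plan is to produce an explicit $2$-dimensional substitution $\omega$ that realizes $\Omega_\U$ as a rescaled copy of itself, and to establish for $\omega$ the three properties named in the abstract---expansive, primitive, recognizable---together with surjectivity up to a shift; the theorem then follows from general principles about self-similar subshifts (self-similarity being the existence of such an $\omega$, minimality coming from primitivity, and aperiodicity from expansivity plus recognizability).

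\textbf{Step 1: the substitution and its elementary properties.} First I would check that $\Omega_\U\neq\emptyset$, ideally by exhibiting a fixed point of $\omega$, and then define, tile by tile, a finite pattern $\omega(t)$ on a rectangular support for each of the $19$ tiles $t\in\U$, the side lengths of these rectangles being consecutive Fibonacci numbers so that the rescaling factor is the golden mean $\varphi$ (this is where the constants $\p$, $\pp$ enter) and $\omega$ is automatically \emph{expansive}. One then verifies, as a finite computation on the $19$ tiles, that horizontally (resp. vertically) adjacent tiles of $\U$ have horizontally (resp. vertically) compatible images; this shows $\omega(\Omega_\U)\subseteq\Omega_\U$ after the rescaling and that $\omega$ is continuous and equivariant for the relevant lattices. \emph{Primitivity}---that for some $n$ every tile of $\U$ occurs in $\omega^{n}(t)$ for every $t$---is then read off the incidence matrix of $\omega$, again a finite check.

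\textbf{Step 2: recognizability, i.e. the unique composition property.} This is the heart of the argument, and the main obstacle. I would factor $\omega$ as a ``vertical'' one-dimensional substitution followed by a ``horizontal'' one, each sending a tile either to a single tile or to a domino of two tiles, and recognize them one at a time. For such a one-dimensional substitution the criterion is the existence of \emph{marker tiles}: a subset $N\subseteq\U$ such that, in every configuration of the appropriately de-substituted Wang shift, two tiles of $N$ never occur consecutively in the relevant direction while every length-two image carries its distinguished half in $N$; the cutting positions are then forced everywhere, which is precisely recognizability. Applying this twice---with the correct marker set for each of the two directions---yields that every $x\in\Omega_\U$ equals $\sigma^{v}\omega(y)$ for a unique $y\in\Omega_\U$ and a unique $v$ in a fundamental domain of the expansion. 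In particular $\omega$ is onto up to a shift, hence a homeomorphism of $\Omega_\U$ onto itself in the rescaled sense, so $\Omega_\U$ is \emph{self-similar}. The difficulty here is entirely in discovering the right factorization of $\omega$ and the right marker set for each factor, and in proving---by a careful analysis of which tiles of $\U$ can be neighbours---that the markers really do pin down all the cuts without ambiguity.

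\textbf{Step 3: minimality and aperiodicity.} Minimality follows from primitivity together with surjectivity up to a shift: every pattern occurring in a configuration of $\Omega_\U$ occurs in some $\omega^{n}(t)$, hence by primitivity in $\omega^{n+m}(s)$ for every $s$ and a uniform $m$, hence with bounded gaps in every configuration, so $\Omega_\U$ is minimal. For aperiodicity, suppose $x\in\Omega_\U$ had a nonzero period $p$; by uniqueness of the decomposition the set of cutting points of $x$ is $p$-invariant, which forces $p$ to lie in the sublattice expanded by $\omega$, so $\sigma^{p}x=x$ desubstitutes to a configuration of $\Omega_\U$ with a strictly shorter nonzero period, and iterating drives the period below norm $1$---impossible over a finite alphabet. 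Since $\Omega_\U\neq\emptyset$, this proves aperiodicity and, with Steps 1 and 2, the theorem.
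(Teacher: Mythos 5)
Your overall strategy is the paper's own: factor $\omega$ as a composition of two essentially one-dimensional substitutions (one per direction), each sending a tile to a tile or a domino; recognize each factor by exhibiting a set of marker tiles; deduce surjectivity up to a shift, hence self-similarity; and get aperiodicity from expansivity plus recognizability. Steps 1 and 2 match the paper's $\alpha$, $\beta$, $\gamma$ and Theorem~\ref{thm:exist-homeo} almost exactly (the paper works in $\Z^2$ with images of shapes $(1,1)$, $(1,2)$, $(2,1)$, $(2,2)$ rather than with Fibonacci rectangles, but that is cosmetic), and your aperiodicity argument is a standard variant of Proposition~\ref{prop:expansive-recognizable-aperiodic}.

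The gap is in your minimality argument. You assert that ``every pattern occurring in a configuration of $\Omega_\U$ occurs in some $\omega^{n}(t)$'' for a single tile $t$, and then invoke primitivity. But surjectivity up to a shift only gives $x=\sigma^{k}\omega^{n}(y)$ with $y\in\Omega_\U$, so a small pattern of $x$ sits inside $\omega^{n}(u)$ where $u$ is a $2\times 2$ subword of $y$ --- a subword of $\L(\Omega_\U)$, not a priori of the substitutive language $\L_\omega$. Primitivity lets you propagate words that are \emph{already} in $\L_\omega$ into every $\omega^{m}(s)$; it says nothing about an arbitrary $2\times2$ subword of $\Omega_\U$. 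In general a subshift $X$ with $X=\overline{\omega(X)}^{\sigma}$ for a primitive expansive $\omega$ can strictly contain $\X_\omega$ and fail to be minimal; this is precisely the point of Remark~\ref{rmk:special-case} (the example $\mu:a\mapsto ca,\ b\mapsto bc,\ c\mapsto cbac$, where $ab$ belongs to the invariant language but not to $\L_\mu$). Closing the gap requires an extra verification that the $2\times2$ subwords of $\Omega_\U$ coincide with those of $\L_\omega$ --- in the paper this is the separate finite computation of Lemma~\ref{lem:50tiles-2x2-in-OmegaU} --- after which Lemma~\ref{lem:substitutive-equivalent-conditions} yields $\Omega_\U=\X_\omega$ and hence minimality. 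Without that check, your Step 3 proves only that $\Omega_\U$ \emph{contains} the minimal subshift $\X_\omega$, not that it equals it.
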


To prove aperiodicity, we use the \emph{composition-decomposition method}
also called the \emph{unique composition property}
for tilings of $\R^d$ described in
\cite[Thm 10.1.1]{MR857454},
\cite[Sec. 5.7.1]{MR3136260} and \cite[Lemma 2.7]{MR1452190}.
Let us cite \cite{MR1156132} which summarizes informally the
composition-decomposition method as two conditions to be satisfied
for a tile set $\T$: \textit{if}
\begin{enumerate}[(C1)]
\item \label{cond:C1} 
    \textit{in every tiling admitted by $\T$ there is a unique way in which the
        tiles can be grouped into patches which lead to a tiling by supertiles;
        and}
\item \label{cond:C2} 
    \textit{
    the markings on the supertiles, inherited from the original tiles,
imply a matching condition for the supertiles which is exactly equivalent to
        that originally specified for the tiles,}
\end{enumerate}
\textit{then $\T$ is aperiodic.}
Condition \ref{cond:C1} was formalized with many vocabularies in the literature
including the term \emph{unambiguous} in \cite{MR2507046}. In this contribution
(see Proposition~\ref{prop:expansive-recognizable-aperiodic}), we use the
notion of \emph{recognizability}.
This method was used to prove aperiodicity of the
first discovered aperiodic examples (see Table~\ref{tab:list-aperiodic-sets}).
The proof of aperiodicity of 104 Berger's tile set is
obtained by the use of two-by-two substitutions of the form $\square\mapsto\boxplus$, mapping each tile to a two-by-two square of 4 tiles.
The same holds for Knuth's and Robinson's tile sets, their constructions being
obtained as a simplification of Berger's tile set (see also \cite{MR2507046}).
Note that the composition-decomposition method does not apply to all aperiodic
tile set, e.g. aperiodicity of Kari and Culik tile sets
\cite{MR1417578,MR1417576} follows from arithmetic properties.

\begin{table}[h]
\begin{center}
\begin{tabular}{lcccccl}
    Author & Size & Year & Proof of aperiodicity & s.s. &$\lambda$ & References\\
    \hline
    Berger & 104 & 1964 & unique composition property & yes & 2 & \cite{MR2939561,MR2507046}\\
    Knuth & 92 & 1968 & '' & yes & 2 & \cite[2.3.4.3]{MR0286317}\\
    Robinson & 56 & 1971 &  '' & yes & 2 & \cite{MR0297572}\\
    Gr\"unbaum et al. & 24 & 1987 & '' & yes & $\frac{1+\sqrt{5}}{2}$ & \cite{MR857454}\\
    Ammann & 16 & 1971 & '' & yes & $\frac{1+\sqrt{5}}{2}$ & \cite[p.
    595]{MR857454} \\
    Kari & 14 & 1996 & arithmetic properties & no & -- & \cite{MR1417578}  \\
    Culik & 13 & 1996 & '' & no & -- & \cite{MR1417576}  \\
    Jeandel, Rao & 11 & 2015 & transducers and Fibonacci word & no & -- & \cite{jeandel_aperiodic_2015} \\
\end{tabular}
\end{center}
    \caption{A list of small aperiodic sets of Wang tiles. 
    Whether the tile set is self-similar (s.s.) is indicated.
    When applicable, $\lambda$ denotes the expansion factor of the similarity
    involved in the composition (see Section~\ref{sec:stone} on stone
    inflations). Ammann's tile set is the smallest known self-similar aperiodic
    Wang tile set. The history includes many unpublished results and a more
    complete story can be found in \cite[Sec. 1.2]{jeandel_aperiodic_2015}.}
    \label{tab:list-aperiodic-sets}
\end{table}

As it is the case for Ammann set of 16 Wang tiles \cite[p. 595]{MR857454}, the
substitutive structure of tilings in $\Omega_\U$ have the form
$\square\mapsto\square$,
$\square\mapsto\dominoV{none}{none}{}$,
$\square\mapsto\dominoH{none}{none}{}$,
$\square\mapsto\boxplus$, that is, the image of a tile is a tile, a vertical or
horizontal domino of two tiles, or a two-by-two square of 4 tiles.
More precisely, in this contribution, we show that any tiling of $\Z^2$ by the
19 tiles in $\U$ can be decomposed uniquely into a tilings by the supertiles shown in
Figure~\ref{fig:Usupertiles}. It can be seen by comparing
Figure~\ref{fig:the-tile-set-U} and
Figure~\ref{fig:Usupertiles} that the markings on the supertiles imply a
matching condition which is equivalent to those specified for~$\U$ (horizontal
color $KO$ in the supertiles plays the role of color $P$ in the tile set $\U$,
etc.), thus satisfying \ref{cond:C1} and \ref{cond:C2}.

\begin{figure}
\begin{center}
\input{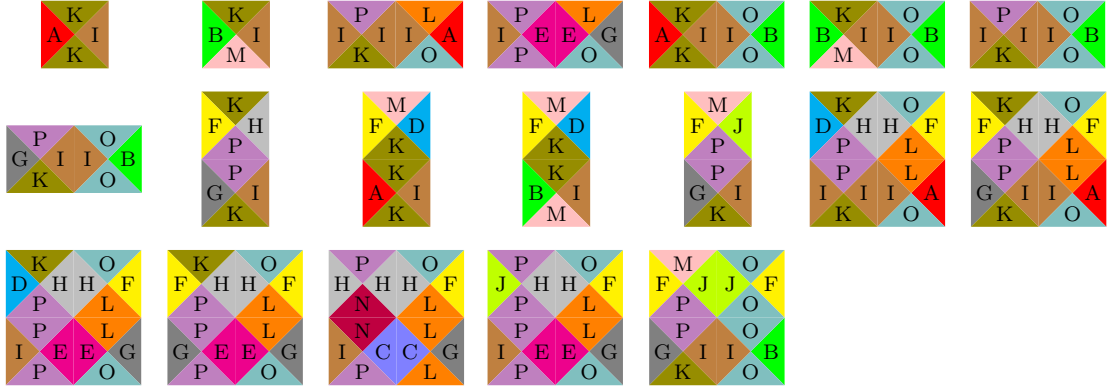}
\end{center}
    \caption{Any tiling in $\Omega_\U$ can be decomposed uniquely into a tiling
    by these 19 supertiles. Each of these supertiles is equivalent to one of the
    tiles in $\U$ which explains the self-similar structure of tilings in
    $\Omega_\U$. It is reminiscent of Figure 11.1.16 in \cite{MR857454}.}
    \label{fig:Usupertiles}
\end{figure}

\begin{figure}[h]
\begin{center}
    \input{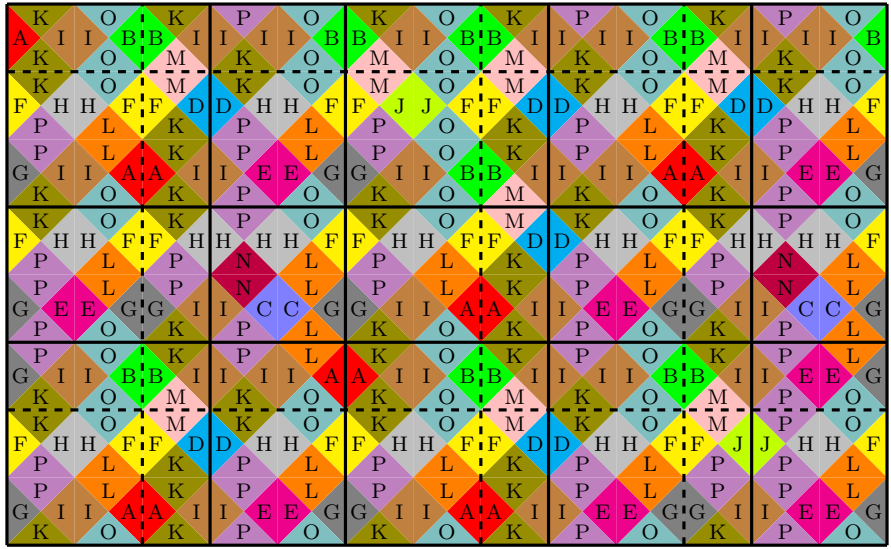}
\end{center}
    \caption{A $13\times 8$ rectangle tiled by tiles from $\U$. 
    It corresponds to $\omega^5(u_4)$ where $\omega=\alpha\beta\gamma$ is
    defined in Section~\ref{sec:aperiodicityofU}.
    Notice that the sequence of colors at the top correspond to the sequence at
    the bottom allowing a periodic tiling of a infinite vertical strip of
    width~13.
    Dashed lines identify the decomposition into supertiles.
    Solid lines identify the decomposition into supertiles of the next level.}
    \label{fig:U_image_5th_of4}
\end{figure}

The particularity and the importance of this contribution stands in breaking
down the proof of unique composition property into smaller steps.
Thus, each step consists in proving recognizability and surjectivity up to a shift for a
substitution of the form $\square\mapsto\square,\square\mapsto\dominoV{none}{none}{}$
or of the form $\square\mapsto\square,\square\mapsto\dominoH{none}{none}{}$.
Each step has the effect of identifying special rows (or columns) made of marker tiles
(see Definition~\ref{def:markers}) that have to be adjacent in one direction but can not
be adjacent in the other direction.
The wider applicability of this method (Theorem~\ref{thm:exist-homeo}) allowing
to be automated by computer check will be used in \cite{labbe_structure_2018}.

Following the theory of $S$-adic systems on $\Z$ \cite{MR3330561} and
hierarchical tilings of $\R^d$ \cite{MR3226791},
we desubstitute tilings in $\Omega_\U$ into tilings in
$\Omega_\V$ by a $2$-dimensional morphism $\alpha:\Omega_\V\to\Omega_\U$ for some Wang tile set $\V$
and we desubstitute tilings of $\Omega_\V$ into tilings in $\Omega_\W$ 
by a $2$-dimensional morphism $\beta:\Omega_\W\to\Omega_\V$ for some Wang tile set $\W$.
Finally we prove that $\alpha$ and $\beta$ are recognizable 
thus satisfying \ref{cond:C1}
and that there is a
letter to letter bijection $\gamma:\Omega_\U\to\Omega_\W$
that satisfies \ref{cond:C2}.
\begin{center}
\begin{tikzpicture}[auto]
    \node (U) at (0,0) {$\Omega_\U$};
    \node (V) at (5,0) {$\Omega_\V$};
    \node (W) at (10,0) {$\Omega_\W$};
    \node (U2) at (13,0) {$\Omega_\U$};
    \draw[to-,very thick] (U) to node {$\alpha:
    \unitsquare{none}\mapsto\unitsquare{none},
    \unitsquare{none}\mapsto\dominoV{none}{none}{}$} (V);
    \draw[to-,very thick] (V) to node {$\beta:
    \unitsquare{none}\mapsto\unitsquare{none},
    \unitsquare{none}\mapsto\dominoH{none}{none}{}$} (W);
    \draw[to-,very thick] (W) to node {$\gamma:
    \unitsquare{none}\mapsto\unitsquare{none}$} (U2);
\end{tikzpicture}
\end{center}
A key result in this contribution is Theorem~\ref{thm:exist-homeo} since
it is used twice to prove that we can desubstitute uniquely.  It can be seen as
a $2$-dimensional generalization to Wang subshifts of the notion of
\emph{derived sequences} introduced in \cite{MR1489074} for $1$-dimensional
substitutive sequences. Remark that another generalization of derived
sequences to tilings of $\R^2$ based on Voronoï tessellations is presented in
\cite{MR1961011}. The advantage of markers and Theorem~\ref{thm:exist-homeo} is
that the rectangular lattice structure of Wang tilings is preserved under each
derivation.

The paper is structured as follows.
In Section~\ref{sec:preliminaries}, we present the necessary definitions and
notations on Wang tiles including self-similarity, recognizability and
aperiodicity.
In Section~\ref{sec:sufficient-condition},
we present a sufficient condition for the existence of
a recognizable $2$-dimensional morphism allowing to desubstitute any tiling of
a Wang shift.
In Section~\ref{sec:desubstitionofU} and Section~\ref{sec:desubstitionofV}, we prove the existence of
tile sets $\V$ and $\W$ and $2$-dimensional morphisms 
$\alpha:\Omega_\V\to\Omega_\U$ and
$\beta:\Omega_\W\to\Omega_\V$
that are recognizable and surjective up to a shift.
In Section~\ref{sec:aperiodicityofU}, we prove that $\U$ and $\W$ are
equivalent tile sets and we prove self-similarity, aperiodicity and minimality
of $\Omega_\U$.
In Section~\ref{sec:stone}, we present a stone inflation version of
$\omega=\alpha\beta\gamma:\Omega_\U\to\Omega_\U$.
A minimal number of lemmas are proved using Sage
and their proofs are provided in Section~\ref{sec:appendix}.

\section{Preliminaries}\label{sec:preliminaries}

In this section, we introduce 
subshifts, shifts of finite type,
Wang tiles,
fusion of Wang tiles,
the transducer representation of Wang tiles,
$d$-dimensional words, morphisms and languages.
We recall the notions of 
self-similarity and
recognizability for proving aperiodicity.

We denote by $\Z=\{\dots,-1,0,1,2,\dots\}$ the integers and
by $\N=\{0,1,2,\dots\}$ the nonnegative integers.
If $d\geq1$ is an integer and $1\leq k\leq d$, we denote by
$\be_k=(0,\dots,0,1,0,\dots,0)\in\Z^d$ the vector of the canonical
basis of $\Z^d$ with a $1$ as position $k$ and $0$ elsewhere.

\subsection{Subshifts and shifts of finite type}


We follow the notations of \cite{MR1861953}.
Let $\A$ be a finite set, $d\geq 1$, and let $\A^{\Z^d}$ be the set of all maps
$x:\Z^d\to\A$, furnished with the compact product topology. We write a typical
point $x\in\A^{\Z^d}$ as $x=(x_\bm)=(x_\bm:\bm\in\Z^d)$, where $x_\bm\in\A$
denotes the value of $x$ at $\bm$. The \emph{shift action} $\sigma:\bn\mapsto
\sigma^\bn$ of $\Z^d$ on $\A^{\Z^d}$ is defined by
\begin{equation}\label{eq:shift-action}
    (\sigma^\bn(x))_\bm = x_{\bm+\bn}
\end{equation}
for every $x=(x_\bm)\in\A^{\Z^d}$ and $\bn\in\Z^d$. A subset $X\subset
\A^{\Z^d}$ is \emph{shift-invariant} if $\sigma^\bn(X)=X$ for every
$\bn\in\Z^d$, and a closed, shift-invariant subset $X\subset\A^{\Z^d}$ is a
\emph{subshift}. 
If $X\subset\A^{\Z^d}$ is a subshift we write
$\sigma=\sigma^X$ for the restriction of the shift-action
\eqref{eq:shift-action} to $X$. 
If $X\subset\A^{\Z^d}$ is a subshift it will sometimes be helpful to specify the
shift-action of $\Z^d$ explicitly and to write $(X,\sigma)$ instead of $X$.
A subshift $(X,\sigma)$ is called \emph{minimal} if $X$ does not
contain any nonempty, proper, closed shift-invariant subset.

For any subset $S\subset\Z^d$ we denote by $\pi_S:\A^{\Z^d}\to\A^S$ the
projection map which restricts every $x\in\A^{\Z^d}$ to $S$. 
A \emph{pattern} is a function $p:S\to\A$ for some finite subset
$S\subset\Z^d$.
A subshift $X\subset\A^{\Z^d}$ is a 
\emph{shift of finite type} (SFT) if there exists a finite set $\F$
of \emph{forbidden} patterns such that
\begin{equation}\label{eq:SFT}
    X = \{x\in\A^{\Z^d} \mid \pi_S\cdot\sigma^\bn(x)\notin\F
    \text{ for every } \bn\in\Z^d \text{ and } S\subset\Z^d\}.
\end{equation}
In this case, we write $X=\SFT(\F)$.

\subsection{Wang tiles}

A \emph{Wang tile} $\tau=\tile{$a$}{$b$}{$c$}{$d$}$ is a unit square with colored edges
formally represented as a tuple of four colors $(a,b,c,d)\in I\times J\times
I\times J$
where $I$, $J$ are
two finite sets (the vertical and horizontal colors respectively). 
For each Wang tile $\tau=(a,b,c,d)$, we denote by
$\scright(\tau)=a$,
$\sctop(\tau)=b$,
$\scleft(\tau)=c$,
$\scbottom(\tau)=d$
the colors of the right, top, left and bottom edges of $\tau$
\cite{wang_proving_1961,MR0297572}.

Let $\T$ be a set of Wang tiles. A \emph{tiling of $\Z^2$ by $\T$} is an assignation $x$
of tiles to each position of $\Z^2$ so that contiguous edges have the same
color, that is, it is a function $x:\Z^2\to\T$ satisfying
\begin{align}
    \scright\circ x(\bn)&=\scleft\circ x(\bn+\be_1)\label{eq:1}\\
    \sctop\circ x(\bn)&=\scbottom\circ x(\bn+\be_2)\label{eq:2}
\end{align}
for every $\bn\in\Z^2$.
We denote by $\Omega_\T\subset\T^{\Z^2}$ the set of all Wang tilings of $\Z^2$
by $\T$ and we call it the \emph{Wang shift} of $\T$. It is a SFT of
the form \eqref{eq:SFT}. 

A set of Wang tiles $\T$ \emph{tiles} the plane if $\Omega_\T\neq\varnothing$
and \emph{does not tile} the plane if $\Omega_\T=\varnothing$.
A tiling $x\in\Omega_\T$ is \emph{periodic} if there is a nonzero period
$\bn\in\Z^2\setminus\{(0,0)\}$ such that $x=\sigma^\bn(x)$
and otherwise it is said \emph{nonperiodic}.
A set of Wang tiles $\T$ is \emph{periodic} if there is a tiling
$x\in\Omega_\T$ which is periodic. A Wang tile set $\T$ is \emph{aperiodic} if
$\Omega_\T\neq\varnothing$ and every tiling $x\in\Omega_\T$ is nonperiodic.
As explained in the first page of \cite{MR0297572} 
(see also \cite[Prop. 5.9]{MR3136260}),
if $\T$ is periodic, then there is a tiling $x$ by $\T$ with two linearly
independent translation vectors (in particular a tiling $x$ with vertical
and horizontal translation vectors).

We say that two Wang tile sets $\T$ and $\S$ are
\emph{equivalent} if there exist two bijections $i:I\to I'$ $j:J\to J'$ such
that
\begin{equation*}
    \S = \{(i(a),j(b),i(c),j(d)) \mid (a,b,c,d) \in \T\}.
\end{equation*}

If $\T$ is a set of Wang tiles, then we define the \emph{dual tile set} $\T^*$
as its image under a reflection through the positive diagonal, i.e.,
\begin{equation*}
    \T^* = \left\{(b,a,d,c)=\tile{$b$}{$a$}{$d$}{$c$}\,\middle|\,
                  (a,b,c,d)=\tile{$a$}{$b$}{$c$}{$d$}\in\T\right\}.
\end{equation*}

\subsection{Fusion of Wang tiles}

Now, we introduce a fusion operation on Wang tiles that can be adjacent in a
tiling.
Let 
$u=\tile{Y}{B}{X}{A}$ and
$v=\tile{Z}{D}{W}{C}$
be two Wang tiles.
For $i\in\{1,2\}$,
we define a binary operation on Wang tiles denoted $\boxminus^i$ as
\begin{equation*}
u\boxminus^1 v=\tile{Z}{BD}{X}{AC}
    \quad
    \text{ if }
    Y = W
    \qquad
    \text{ and }
    \qquad
u\boxminus^2 v=
    \raisebox{-3mm}{
\begin{tikzpicture}[scale=0.9]
\draw (0, 0) -- (1, 0);
\draw (0, 0) -- (0, 1);
\draw (1, 1) -- (1, 0);
\draw (1, 1) -- (0, 1);
\node[rotate=90,font=\footnotesize] at (0.8, 0.5) {YZ};
\node[rotate=0,font=\footnotesize] at (0.5, 0.8) {D};
\node[rotate=90,font=\footnotesize] at (0.2, 0.5) {XW};
\node[rotate=0,font=\footnotesize] at (0.5, 0.2) {A};
\end{tikzpicture}}
    \quad \text{ if } B = C.
\end{equation*}
If $i=1$ and $Y\neq W$ or if
$i=2$ and $B\neq C$, we say that $u\boxminus^i v$ is \emph{not well-defined}.
If $u\boxminus^i v$ is well-defined for some $i\in\{1,2\}$, it means that $u$
and $v$ can appear at position $\bn$ and $\bn+\be_i$ in a tiling for some
$\bn\in\Z^d$. When appropriate, we also use the following more visual notation
for denoting $u\boxminus^i v$:
\begin{equation*}
u\boxminus^1 v = 
\begin{array}{|cc|}\hline u & v\\\hline\end{array}
    \qquad
    \text{ and }
    \qquad
u\boxminus^2 v = 
\begin{array}{|c|}
\hline
v \\
u \\
\hline
\end{array}.
\end{equation*}
For each $i\in\{1,2\}$, one can define a new tile set from 
two Wang tile sets $\T$ and $\S$ as
\begin{equation*}
    \T\boxminus^i\S = \{u\boxminus^i v \text{ well-defined }\mid
                        u\in\T,v\in\S\}.
\end{equation*}
The fusion operation together with taking the dual of a tile set satisfy the
following equations:
\begin{equation*}
    (\T\boxminus^1\S)^* = \T^*\boxminus^2\S^*
    \qquad
    \text{ and }
    \qquad
    (\T\boxminus^2\S)^* = \T^*\boxminus^1\S^*.
\end{equation*}

\subsection{Transducer representation of Wang tiles}

A transducer $M$ is a labeled directed graph whose nodes are called
\emph{states} and edges are called \emph{transitions}. The transitions are
labeled by pairs $a|b$ of letters. The first letter $a$ is the input symbol and
the second letter $b$ is the output symbol. There is no initial nor final state.
A transducer $M$ computes a relation $\rho(M)$ between bi-infinite sequences of
letters. 

As observed in \cite{MR1417578} and extensively used in
\cite{jeandel_aperiodic_2015}, any finite set of Wang tiles may be interpreted
as a transducer. To a given tile set $\T$, the states of the corresponding
transducer $M_\T$ are the colors of the vertical edges. The colors of
horizontal edges are the input and output symbols. There is a transition from
state $s$ to state $t$ with label $a|b$ if and only if there is a tile
$(t,b,s,a)\in\T$ whose left, right, bottom and top edges are colored by $s$,
$t$, $a$ and $b$, respectively:
\begin{equation*}
    M_\T = \left\{ s\xrightarrow{a|b} t : 
           \tile{$t$}{$b$}{$s$}{$a$}=(t,b,s,a) \in \T \right\}.
\end{equation*}
The transducer $M_\U$ for the tile set $\U$ is shown in Figure~\ref{fig:T7transducer}.
\begin{figure}[h]
\begin{center}
    \includegraphics[width=.8\linewidth]{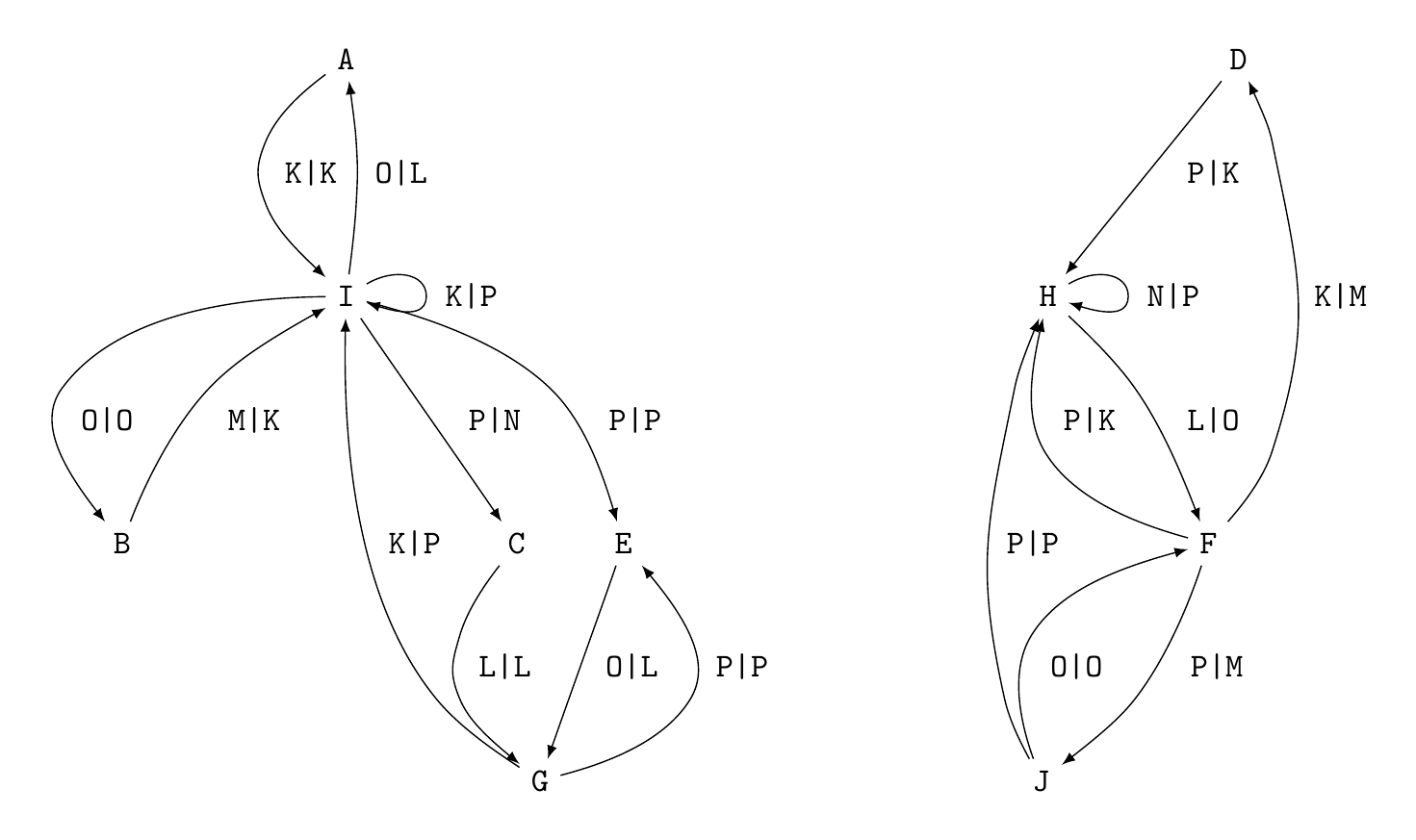}
\end{center}
\caption{The transducer $M_\U$. Each tile of $\U$ corresponds to a transition.
For example, the first tile of $\U$ is associated to $J\xrightarrow{O|O}F$,
    etc.}
\label{fig:T7transducer}
\end{figure}
Sequences $x$ and $y$ are in the relation $\rho(M_\T)$ if and only
if there exists a row of tiles, with matching vertical edges, whose bottom edges
form sequence $x$ and top edges sequence $y$. 
For example, the sequence of bottom colors seen on the
lowest row of the tiling shown at Figure~\ref{fig:U_image_5th_of4}
is $KOKPOKOKPOKPO$.
Starting in state $G$ and reading that word as input in the transducer
$M_\U$ we follow the transitions:
\[
G\xrightarrow{K|P}
I\xrightarrow{O|L}
A\xrightarrow{K|K}
I\xrightarrow{P|P}
E\xrightarrow{O|L}
G\xrightarrow{K|P}
I\xrightarrow{O|L}
A\xrightarrow{K|K}
I\xrightarrow{P|P}
E\xrightarrow{O|L}
G\xrightarrow{K|P}
I\xrightarrow{P|P}
E\xrightarrow{O|L}
G.
\]
We finish in state $G$ and we get $PLKPLPLKPLPPL$ as output which corresponds
to the sequence of top colors of the lowest row of the tiling at
Figure~\ref{fig:U_image_5th_of4}.
Then, starting in state $F$ and reading the previously output word
$PLKPLPLKPLPPL$ as input we compute the second row of the tiling at
Figure~\ref{fig:U_image_5th_of4}, etc. In general, there is a
one-to-one correspondence between valid tilings of the plane, and
the iterated execution of the transducer.

Notice that the result of the usual composition of transducers
corresponds to the transducer of $\T\boxminus^2\S$:
\begin{equation}\label{eq:boxminus_with_transducer}
    M_\T\circ M_{\S} = M_{\T\boxminus^2\S}.
\end{equation}
As done in \cite{jeandel_aperiodic_2015},
the result of the composition can be filtered by 
recursively removing any source or sink state from the transducer
reducing the size of the tile set.
This is very helpful for doing computations
(see the proof Lemma~\ref{lem:50tiles-2x2-in-OmegaU}).

\subsection{$d$-dimensional word}

In this section, we recall the definition of $d$-dimensional word that appeared
in \cite{MR2579856} and we keep the notation $u\odot^i v$ they proposed
for the concatenation. 

If $i\leq j$ are integers, then $\llbracket i, j\rrbracket$ denotes the interval of integers $\{i, i+1, \dots, j\}$.
Let $\bn=(n_1,\dots,n_d)\in\N^d$ and $\A$ be an alphabet.
We denote by $\A^{\bn}$ the set of functions
\begin{equation*}
    u:
\llbracket 0,n_1-1\rrbracket
\times
\cdots
\times
\llbracket 0,n_d-1\rrbracket
\to\A.
\end{equation*}
An element $u\in\A^\bn$ is called a
\emph{$d$-dimensional word $u$ of shape $\bn=(n_1,\dots,n_d)\in\N^d$}
on the alphabet~$\A$.
The set of all finite $d$-dimensional word is 
$\A^{*^d}=\{\A^\bn\mid\bn\in\N^d\}$.
A $d$-dimensional word of shape $\be_k+\sum_{i=1}^d\be_i$ is called a
\emph{domino in the direction $\be_k$}.
When the context is clear, we write $\A$ instead of $\A^{(1,\dots,1)}$.
When $d=2$, we represent a $d$-dimensional word $u$ of shape $(n_1,n_2)$ as a
matrix with Cartesian coordinates:
\begin{equation*}
    u=
    \left(\begin{array}{ccc}
        u_{0,n_2-1} &\dots   & u_{n_1-1,n_2-1} \\
        \dots   &\dots   & \dots \\
        u_{0,0} &\dots   & u_{n_1-1,0}
    \end{array}\right).
\end{equation*}
Let $\bn,\bm\in\N^d$ and $u\in\A^\bn$ and $v\in\A^\bm$.
If there exists an index $i$ such that the
shapes $\bn$ and $\bm$ are equal except at index $i$,
then the \emph{concatenation of $u$ and $v$ in the direction $\be_i$} 
is \emph{well-defined}: it is
the 
$d$-dimensional word $u\odot^i v$ of shape $(n_1,\dots,n_{i-1},n_i+m_i,n_{i+1},\dots,n_d)\in\N^d$
defined as
\begin{equation*}
    (u\odot^i v) (\ba) = 
\begin{cases}
    u(\ba)          & \text{if}\quad 0 \leq a_i < n_i,\\
    v(\ba-n_i\be_i) & \text{if}\quad n_i \leq a_i < n_i+m_i.
\end{cases}
\end{equation*}
If the shapes $\bn$ and $\bm$ are not equal except at
index $i$, we say that the concatenation of $u\in\A^\bn$ and $v\in\A^\bm$ in
the direction $\be_i$ is \emph{not well-defined}.

Let $\bn,\bm\in\N^d$ and $u\in\A^\bn$ and $v\in\A^\bm$.
We say that $u$ \emph{occurs in $v$ at position $\bp\in\N^d$} if
$v$ is large enough, i.e., $\bm-\bp-\bn\in\N^d$ and
\[
    v(\ba+\bp) = u(\ba)
\]
for all $\ba=(a_1,\dots,a_d)\in\N^d$ such that 
$0\leq a_i<n_i$ with $1\leq i\leq d$.
If $u$ occurs in $v$ at some position, then we say that $u$ is a
$d$-dimensional \emph{subword} or \emph{factor} of $v$.

\subsection{$d$-dimensional morphisms}
In this section, we generalize the definition of $d$-dimensional morphisms
\cite{MR2579856} to the case where the domain and codomain are different as for
$S$-adic systems.

Let $\A$ and $\B$ be two alphabets.
Let $X\subseteq\A^{*^d}$.
A function $\omega:X\to\B^{*^d}$ is a \emph{$d$-dimensional
morphism} if for every
$i$ with $1\leq i\leq d$,
and every $u,v\in X$ such that $u\odot^i v\in X$ is well-defined
we have
that the concatenation $\omega(u)\odot^i \omega(v)$
in direction $\be_i$ is well-defined and
\begin{equation*}
    \omega(u\odot^i v) = \omega(u)\odot^i \omega(v).
\end{equation*}
The next lemma can be deduced from the definition.
It says that when $d\geq 2$ every $d$-dimensional morphism
defined on the whole space $X=\A^{*^d}$ is uniform in the sense that it maps
every letter to a $d$-dimensional word of the same shape. These are called block-substitutions in \cite{2017_frank_introduction}.

\begin{lemma}
If $d\geq 2$ and $\omega:\A^{*^d}\to\B^{*^d}$ is a $d$-dimensional morphism,
then there exists a shape $\bn\in\N^d$ such that $\omega(a)\in\A^\bn$ for
every letter $a\in\A$.
\end{lemma}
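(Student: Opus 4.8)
The plan is to extract the common shape directly from the morphism property applied to single-letter concatenations. First I would fix two letters $a,b\in\A$ and a direction index $i$ with $1\leq i\leq d$. Since the domain is the full space $\A^{*^d}$, the concatenation $a\odot^i b$ is well-defined (both $a$ and $b$ have shape $(1,\dots,1)$, which agree in every coordinate other than $i$ trivially), so the morphism property forces $\omega(a)\odot^i\omega(b)$ to be well-defined as well. By the definition of well-defined concatenation in direction $\be_i$, the shapes of $\omega(a)$ and $\omega(b)$ must agree in every coordinate except possibly the $i$-th. Running this over all $i\in\{1,\dots,d\}$ yields: for each $i$, the shapes of $\omega(a)$ and $\omega(b)$ agree off coordinate $i$.

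Now comes the one place where $d\geq 2$ is used. If $d\geq 2$, then for any coordinate $k\in\{1,\dots,d\}$ we may pick an index $i\neq k$; applying the previous paragraph with that $i$, the shapes of $\omega(a)$ and $\omega(b)$ agree in coordinate $k$. Since $k$ was arbitrary, the shapes of $\omega(a)$ and $\omega(b)$ agree in every coordinate, i.e.\ $\omega(a)$ and $\omega(b)$ have the same shape. As $a,b$ were arbitrary, all images $\omega(a)$, $a\in\A$, share a common shape $\bn\in\N^d$, and $\omega(a)\in\A^{\bn}$ — wait, $\omega(a)\in\B^{\bn}$; the statement as written says $\A^{\bn}$ but this is a harmless typo, the content being that the shape is constant. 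Either way, the shape $\bn$ is the desired common shape, which finishes the proof.

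There is essentially no obstacle here: the argument is a two-line diagram chase through the definitions of $d$-dimensional morphism and of the concatenation operation $\odot^i$. The only subtlety worth a sentence in the write-up is to note explicitly \emph{why} $d\geq 2$ is needed — for $d=1$ there is only the single direction $\be_1$, the morphism condition places no constraint relating the lengths of $\omega(a)$ and $\omega(b)$, and indeed ordinary one-dimensional substitutions are typically non-uniform. I would include a one-clause remark to that effect so the hypothesis does not look gratuitous. I would also make sure to state the trivial base case cleanly: if $\A$ is empty or a singleton the statement is vacuous or immediate, so we may assume $|\A|\geq 2$ when invoking the comparison between two distinct letters.
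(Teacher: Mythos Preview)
Your argument is correct and is exactly the deduction from the definition that the paper has in mind; the paper does not spell out a proof beyond the sentence ``The next lemma can be deduced from the definition,'' and your two-step argument (well-definedness of $\omega(a)\odot^i\omega(b)$ forces shape agreement off coordinate $i$, then $d\geq 2$ lets you vary $i$ to cover every coordinate) is the intended one. Your observation that $\A^{\bn}$ should read $\B^{\bn}$ is also right.
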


Therefore, to consider non-uniform $d$-dimensional morphism when $d\geq 2$, we
need to restrict the domain to a strict subset $X\subsetneq\A^{*^d}$.
For example, 
let $d=2$ and $\A=\{a,b,x,y\}$.
The map $\omega:\A\to\A^{*^2}$ defined by
$\omega: a\mapsto aaa, b\mapsto bbb, x\mapsto xx, y\mapsto yy$
is not a well-defined $2$-dimensional morphism $\A^{*^2} \to\A^{*^2}$,
but 
it is a well-defined $2$-dimensional morphism $X\to\A^{*^2}$,
for the strict subset $X\subsetneq\A^{*^2}$ such that every column
of every $w\in X$ is over $\{a,b\}$ or $\{x,y\}$.
For instance, we have
\[
    \omega:
    \begin{array}{c|c|c|c}
        x & a & b & y\\
        y & a & a & y\\
        x & b & a & x\\
        y & b & b & y\\
        y & b & a & x\\
        x & a & b & x\\
    \end{array}
    \mapsto
    \begin{array}{c|c|c|c}
        xx & aaa & bbb & yy\\
        yy & aaa & aaa & yy\\
        xx & bbb & aaa & xx\\
        yy & bbb & bbb & yy\\
        yy & bbb & aaa & xx\\
        xx & aaa & bbb & xx\\
    \end{array}.
\]
A $d$-dimensional morphism 
$\omega:X \to\B^{*^d}$ 
with $X\subseteq\A^{*^d}$
can be extended to a $d$-dimensional morphism
$\omega:Y\to\B^{\Z^d}$
with $Y\subseteq\A^{\Z^d}$.

In \cite{MR2579856} and \cite[p.144]{MR1014984}, they consider the case $\A=\B$
and they restrict the domain of $d$-dimensional morphisms to the language they
generate.

Suppose now that $\A=\B$.
We say that a $d$-dimensional morphism $\omega:\A\to\A^{*^d}$ is
\emph{expansive}
if for every $a\in\A$ and $K\in\N$,
there exists $m\in\N$ such that 
$\min(\shape(\omega^m(a)))>K$.
We say that $\omega$ is \emph{primitive}
if there exists $m\in\N$ such that
for every $a,b\in\A$ the letter $b$ occurs in $\omega^m(a)$.

The definition of prolongable substitutions \cite[Def.
1.2.18--19]{MR2742574} can be adapted in the case of $d$-dimensional morphisms.
Let $d$-dimensional morphism $\omega:X \to\A^{*^d}$ with $X\subseteq\A^{*^d}$.
Let $s\in\{+1,-1\}^d$.
We say that $\omega$ is \emph{prolongable on letter $a\in\A$} in the
hyperoctant of sign $s$ if the letter $a$ appears in the appropriate corner of
its own image $\omega(a)$ more precisely at position
$p=(p_1,\dots,p_d)\in\N^d$ where
\[
p_i = 
\begin{cases}
0       & \text{if}\quad s_i = +1,\\
n_i-1   & \text{if}\quad s_i = -1.
\end{cases}
\]
where $\bn=(n_1,\dots,n_d)\in\N^d$ is the shape of $\omega(a)$.
If $\omega$ is prolongable on letter $a\in\A$ in the hyperoctant of
sign $s$ 
and if
$\lim_{m\to\infty}\min(\shape(\omega^m(a)))=\infty$, then
$\lim_{m\to\infty}\omega^m(a)$ is a well-defined $d$-dimensional infinite word
$s_0\N\times\dots\times s_{d-1}\N\to\A$.

\subsection{$d$-dimensional language}

A subset $L\subseteq\A^{*^d}$ is called a $d$-dimensional \emph{language}. The
\emph{factorial closure} of a language $L$ is
\begin{equation*}
    \overline{L}^{Fact}
    = \{u\in\A^{*^d} \mid u\text{ is a $d$-dimensional subword of some } 
                           v\in L\}.
\end{equation*}
A language $L$ is \emph{factorial} if $\overline{L}^{Fact}=L$.
All languages considered in this contribution are factorial.
Given a tiling $x\in\A^{\Z^d}$, the \emph{language} $\L(x)$ defined by $x$ is
\begin{equation*}
    \L(x) = \{u\in\A^{*^d} \mid u\text{ is a $d$-dimensional subword of } x\}.
\end{equation*}
The \emph{language} of a subshift $X\subseteq\A^{\Z^d}$ is
    $\L_X = \cup_{x\in X} \L(x)$.
Conversely, given a factorial language $L\subseteq\A^{*^d}$ we define the subshift
\begin{equation*}
    \X_L = \{x\in\A^{\Z^d}\mid \L(x)\subseteq L\}.
\end{equation*}
A language $L\subseteq\A^{*^d}$ is \emph{forbidden} in a subshift
$X\subset\A^{\Z^d}$ if $L\cap\L_X=\varnothing$.
By extension, a $d$-dimensional subword $u\in\A^{*^d}$ is \emph{forbidden} in a
subshift $X\subset\A^{\Z^d}$ if the singleton language $\{u\}$ is forbidden in
$X$.

We say that a word $u\in\A^\bn$,
with $\bn=(n_1,\dots,n_d)\in\N^d$,
admits a \emph{surrounding of radius $r\in\N$}
in a language $L\subseteq\A^{*^d}$ (resp. in a subshift $X\subset\A^{\Z^d}$)
if there exists $w\in\A^{\bn+2(r,\dots,r)}$ 
such that $w\in L$ (resp. $w\in\L_X$) and
$u$ occurs in $w$ at position $(r,\dots,r)$.

Now we consider notions of languages related to morphisms.
Given a $d$-dimensional morphism $\omega:\A \to\B^{*^d}$
and a language $L\subseteq\A^{*^d}$ of $d$-dimensional words, then
we define the image of the language $L$ under $\omega$ as
the language
\begin{equation*}
\overline{\omega(L)}^{Fact}
    = \{u\in\B^{*^d} \mid u\text{ is a $d$-dimensional subword of }
                  \omega(v) \text{ with } v\in L\}
    \subseteq \B^{*^d}
\end{equation*}
and the image of a subshift $X\subseteq\A^{\Z^d}$ under $\omega$
as the subshift
\begin{equation*}
\overline{\omega(X)}^{\sigma}
    = \{\sigma^\bk\omega(x)\in\B^{\Z^d} \mid \bk\in\Z^d, x\in X\}
    \subseteq \B^{\Z^d}.
\end{equation*}
The next lemma shows that the fact that a subshift is the image of another
subshift under a $d$-dimensional morphism can be restated equivalently in terms
of their languages.

\begin{lemma}\label{lem:existence-omega-representation}
    Let $\omega:X\to Y$ be a $d$-dimensional morphism for some subshifts
    $X\subseteq\A^{\Z^d}$ and $Y\subseteq\B^{\Z^d}$. The following conditions
    are equivalent:
\begin{enumerate}[\rm (i)]
    \item $Y=\overline{\omega(X)}^{\sigma}$,
    \item $\L_Y=\overline{\omega(\L_X)}^{Fact}$.
\end{enumerate}
\end{lemma}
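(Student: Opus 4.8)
The plan is to reduce both implications to the single identity
\[
  \L_{\overline{\omega(X)}^{\sigma}} = \overline{\omega(\L_X)}^{Fact},
\]
using two standard facts about subshifts: (1) the language $\L_S$ of a set $S$ of configurations depends only on the finite patterns occurring in the elements of $S$, hence is unchanged if $S$ is replaced by $\bigcup_{\bk}\sigma^{\bk}S$ or by its topological closure; and (2) every subshift $S$ satisfies $S=\X_{\L_S}$ (the inclusion $\subseteq$ is immediate, and the reverse holds by compactness: a configuration whose finite windows all occur in $S$ is a limit of elements of $S$, hence lies in the closed set $S$).

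To prove the identity I would first record the block-compatibility of the $\Z^d$-extension of $\omega$: for every $x\in X$, \emph{(a)} $\L(\omega(x)) \subseteq \overline{\omega(\L(x))}^{Fact}$, and \emph{(b)} every finite subword $v\in\L(x)$ of $x$ has $\omega(v)$ a subword of $\omega(x)$. Both are read off the definition of a $d$-dimensional morphism and of its extension to $\Z^d$: the configuration $\omega(x)$ is the concatenation (via $\odot^i$ in each direction) of the blocks $\omega(x_{\bm})$, $\bm\in\Z^d$; a finite window of $\omega(x)$ meets only finitely many of these blocks, which together form $\omega(v)$ for the rectangular window $v$ of $x$ covering them, giving \emph{(a)}; and an occurrence of $v$ in $x$ writes $x$ as a concatenation with $v$ as a factor, so applying $\omega$ and using commutation with $\odot^i$ gives \emph{(b)}. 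Granting this, fact (1) gives $\L_{\overline{\omega(X)}^{\sigma}}=\bigcup_{x\in X}\L(\omega(x))$; then \emph{(a)} together with $\L(x)\subseteq\L_X$ yields the inclusion $\subseteq\overline{\omega(\L_X)}^{Fact}$, while for the reverse any $u\in\overline{\omega(\L_X)}^{Fact}$ is a subword of some $\omega(v)$ with $v\in\L_X$, and choosing $x\in X$ with $v\in\L(x)$, \emph{(b)} makes $u$ a subword of $\omega(x)$.

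The two implications are then immediate. If (i) holds, $\L_Y=\L_{\overline{\omega(X)}^{\sigma}}=\overline{\omega(\L_X)}^{Fact}$ by the identity, which is (ii). Conversely, assume (ii): since $\omega$ maps $X$ into the subshift $Y$, the shift-invariant set $\overline{\omega(X)}^{\sigma}$ lies in $Y$; it is itself a subshift by definition, and by the identity and (ii) it has the same language as $Y$. Applying fact (2) to both subshifts, $Y=\X_{\L_Y}=\X_{\overline{\omega(\L_X)}^{Fact}}=\X_{\L_{\overline{\omega(X)}^{\sigma}}}=\overline{\omega(X)}^{\sigma}$, which is (i).

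The only genuine work is items \emph{(a)}--\emph{(b)}: they are visually obvious but require care with the bookkeeping of where the block boundaries of the (non-uniform) morphism fall in the $\Z^d$-extension, in particular the assertion that a finite window of $\omega(x)$ is always covered by the images of an honest rectangular sub-pattern of $x$. Everything else is routine manipulation of languages together with the standard identity $S=\X_{\L_S}$ for subshifts.
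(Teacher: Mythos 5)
Your proof is correct, and for the direction (i)$\implies$(ii) it is essentially the paper's argument: $\omega(\L_X)\subseteq\L_Y$ plus factoriality of $\L_Y$ give one inclusion, and decomposing a factor of $y=\sigma^{\bk}\omega(x)$ across block boundaries gives the other. The only place you diverge is (ii)$\implies$(i): the paper constructs an $\omega$-representation of a given $y\in Y$ directly, invoking closedness of $X$ to extract a preimage configuration, whereas you instead observe that $\overline{\omega(X)}^{\sigma}$ and $Y$ are two subshifts with the same language and conclude by the standard identity $S=\X_{\L_S}$. The two routes are interchangeable, but note that yours quietly relies on $\overline{\omega(X)}^{\sigma}$ being \emph{closed} (not just shift-invariant), which is not literally "by definition": it needs the observation that every point of this set has a centered representation, so the set is a finite union of shifts of the compact set $\omega(X)$ (with $\omega$ continuous because block positions are determined by finite windows). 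That is the same compactness of $X$ the paper uses, just relocated; once you grant that $\overline{\omega(X)}^{\sigma}$ is a subshift — as the paper's definition implicitly does — your argument is complete. Your items (a)--(b) are exactly the bookkeeping the paper's one-line sentences compress, and your remark that the block boundaries of a (necessarily grid-aligned) non-uniform morphism cut $\Z^d$ into a product grid, so that any finite window is covered by the image of a rectangular sub-pattern, is the right justification.
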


\begin{proof}
    (i) $\implies$ (ii)
    By definition of $\omega$, we have $\omega(\L_X)\subseteq\L_Y$
    and $\overline{\omega(\L_X)}^{Fact}\subseteq\L_Y$ since $\L_Y$ is a
    factorial language.
    Let $u\in\L_Y$, i.e., 
    $u$ is a factor of some word $y\in Y$.
    There exists $\bk\in\Z^d$ and $x\in X$ such that $y=\sigma^\bk\omega(x)$.
    Therefore $u$ is a subword of $\omega(v)$ for some $v\in\L_X$
    and we conclude $\L_Y \subseteq \overline{\omega(\L_X)}^{Fact}$.

    (ii) $\implies$ (i)
    We have $\omega(X)\subset Y$.
    Since $Y$ is shift-invariant, we also have
    $\overline{\omega(X)}^{\sigma}\subseteq Y$.
    Let $y\in Y$. Thus $y$ is in the subshift generated by the language
    $\L_Y=\overline{\omega(\L_X)}^{Fact}$.
    Since $X$ is closed, there exists $x\in X$ and
    $\bk\in\Z^d$ such that $y=\sigma^\bk\omega(x)$.
    Therefore
    $Y\subseteq\overline{\omega(X)}^{\sigma}$.
\end{proof}

\subsection{Self-similar subshifts}

In this section we consider languages and subshifts defined from substitutions
leading to self-similar structures.
A subshift $X\subseteq\A^{\Z^d}$ (resp. a language $L\subseteq\A^{*^d}$)
is \emph{self-similar}
if there exists an expansive
$d$-dimensional morphism $\omega:\A\to\A^{*^d}$ such that
$X=\overline{\omega(X)}^\sigma$
(resp.  $L=\overline{\omega(L)}^{Fact}$).

Self-similar languages and subshifts can be constructed by iterative
application of the morphism $\omega$ starting with the letters.
The \emph{language} $\L_\omega$ defined by an expansive $d$-dimensional
morphism $\omega:\A \to\A^{*^d}$ is
\begin{equation*}
    \L_\omega = \{u\in\A^{*^d} \mid u\text{ is a $d$-dimensional subword of }
    \omega^n(a) \text{ for some } a\in\A\text{ and } n\in\N \}.
\end{equation*}
It satisfies
$\L_\omega=\overline{\omega(\L_\omega)}^{Fact}$
and thus is self-similar.
The \emph{substitutive shift} $\X_\omega=\X_{\L_\omega}$
defined from the language of $\omega$ is a self-similar subshift.
If $\omega$ is primitive then $\X_\omega$ is the smallest nonempty subshift
$X\subseteq\A^{\Z^d}$ satisfying $X=\overline{\omega(X)}^{\sigma}$.

\begin{remark}\label{rmk:special-case}
    In some references, the definition of self-similar is more restrictive.
For example, in \cite[p.268]{MR1637896},
the definition of self-similarity for tilings of $\R^d$ 
would be restated in terms of tilings of $\Z^d$ as:
    a subshift $X\subseteq\A^{\Z^d}$ is \emph{self-similar} if there exists an
    expansive $d$-dimensional morphism $\omega:\A\to\A^{*^d}$ such that $X =
    \X_\omega$.
    Observe that a nonempty subshift $X$ that satisfies
    $X=\overline{\omega(X)}^{\sigma}$ for some expansive and primitive
    $d$-dimensional morphism $\omega$ contains $\X_\omega$ but as illustrated
    in the following example, it needs not be~$\X_\omega$.

Consider the $1$-dimensional morphism $\mu:a\mapsto ca, b\mapsto bc, c\mapsto
    cbac$ over the alphabet $\A=\{a,b,c\}$. It is expansive and primitive.
    Consider the language
    \[
    L = \{u\in\A^{*^d} \mid u\text{ is a $d$-dimensional subword of }
    \mu^n(ab) \text{ for some } n\in\N \}
    \]
    that satisfies $L\supseteq\L_\omega$ and 
    $L=\overline{\omega(L)}^{Fact}$.
    We have $ab\in L$ but $ab\notin\L_\omega$ thus $L\supsetneq\L_\omega$.
\end{remark}

The next lemma shows that there is a unique language $L$ satisfying
    $L=\overline{\omega(L)}^{Fact}$
    and a unique subshift $X$ satisfying
    $X=\overline{\omega(X)}^{\sigma}$
whenever $\omega$ is primitive and 
the words of shape $(2,\dots,2)$ of $L$ 
or of $\L_X$ are in $\L_\omega$.

\begin{lemma}\label{lem:substitutive-equivalent-conditions}
    Let $\omega:\A\to\A^{*^d}$ be an expansive and primitive $d$-dimensional
    morphism. Let $L\subseteq\A^{*^d}$ be a $d$-dimensional language
    such that $L=\overline{\omega(L)}^{Fact}$. Then $\L_\omega\subseteq L$ and
    the following conditions are equivalent:
\begin{enumerate}[\rm (i)]
\item $L=\L_\omega$,
\item $\X_L$ is minimal,
\item $\X_L=\X_\omega$,
\item $L\cap\A^{(2,\dots,2)} = \L_\omega\cap\A^{(2,\dots,2)}$.
\end{enumerate}
\end{lemma}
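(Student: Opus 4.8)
The plan is to prove the chain $(i) \Rightarrow (ii) \Rightarrow (iii) \Rightarrow (iv) \Rightarrow (i)$, with the inclusion $\L_\omega \subseteq L$ established first since it is needed throughout. That inclusion is immediate: since $\omega$ is expansive, iterating $\omega$ on any letter eventually produces words of arbitrarily large shape, and $L = \overline{\omega(L)}^{Fact}$ implies (by induction on $n$) that $\omega^n(a) \in L$ for every $a \in \A$ and $n \in \N$; hence every subword of some $\omega^n(a)$, i.e.\ every element of $\L_\omega$, lies in $L$. Note that one should first check $\A = \A^{(1,\dots,1)} \subseteq L$: primitivity guarantees every letter appears in some $\omega^m(a)$, and any letter appearing as a subword of an element of $L$ lies in $L$ by factoriality, and $L$ is nonempty because $\overline{\omega(L)}^{Fact} = L$ forces $L$ to be closed under $\omega$ (if $L$ were empty the fixed-point equation would still hold vacuously, so one extra word is needed — but $\X_L$ being a subshift and the statement implicitly assuming $L$ nonempty handles this; alternatively $L \supseteq \L_\omega \neq \varnothing$ directly since $\L_\omega$ is visibly nonempty and contained in $L$ once we know $\omega^n(a) \in L$, which needs $L \neq \varnothing$ — so I will argue $L$ nonempty from the hypothesis that $\X_L$ or $\L_X$ is under discussion, or simply add "nonempty" where needed).

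For $(i) \Rightarrow (ii)$: if $L = \L_\omega$ then $\X_L = \X_\omega$, and minimality of $\X_\omega$ for primitive $\omega$ is the standard fact — I would invoke that $\X_\omega$ is the smallest nonempty subshift satisfying $X = \overline{\omega(X)}^\sigma$ (stated in the excerpt just before Remark~\ref{rmk:special-case}), and that primitivity of a $d$-dimensional morphism forces every sufficiently large iterate $\omega^n(a)$ to contain a copy of every $\omega^m(b)$, so every point of $\X_\omega$ has a syndetic set of occurrences of every pattern, which is exactly minimality. For $(ii) \Rightarrow (iii)$: since $\L_\omega \subseteq L$ we have $\X_\omega \subseteq \X_L$ (more precisely $\X_{\L_\omega} \subseteq \X_L$); $\X_\omega$ is nonempty and shift-invariant and closed, hence a nonempty closed subset of the minimal system $\X_L$, so $\X_\omega = \X_L$. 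For $(iii) \Rightarrow (iv)$: from $\X_L = \X_\omega$ and Lemma~\ref{lem:existence-omega-representation} one gets $\L_{\X_L} = \L_{\X_\omega} = \L_\omega$; then intersecting with $\A^{(2,\dots,2)}$ and using that $L$ is determined by its size-$(2,\dots,2)$ words only up to $\X_L$ — here I need that $L \cap \A^{(2,\dots,2)} = \L_{\X_L} \cap \A^{(2,\dots,2)}$, i.e.\ that every size-$(2,\dots,2)$ word of $L$ actually extends to a biinfinite configuration in $\X_L$; this uses a compactness/König argument together with the fact that $L$ is closed under $\omega$ so that a size-$(2,\dots,2)$ word in $L$ appears inside $\omega^n(a)$ for some $a,n$ (this is the nontrivial point; see below), giving it a surrounding of every radius, hence a point of $\X_L$ containing it.

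The hard step is $(iv) \Rightarrow (i)$, and it is also where the real content of the lemma lives. The goal is: assuming $L$ and $\L_\omega$ have the same words of shape $(2,\dots,2)$, show $L \subseteq \L_\omega$ (the reverse inclusion is already known). The idea is to desubstitute: given $u \in L$, I want to locate $u$ inside some $\omega^n(a)$. Because $L = \overline{\omega(L)}^{Fact}$, the word $u$ is a subword of $\omega(v)$ for some $v \in L$, and $v$ itself is a subword of $\omega(v')$, etc.; iterating, $u$ sits inside $\omega^n(v_n)$ for words $v_n \in L$ of bounded shape — in fact, since $\omega$ is expansive, once $n$ is large the "core" of $v_n$ needed to cover $u$ can be taken of bounded shape, say of shape $(k,\dots,k)$ for $k$ depending only on $u$ and $\omega$. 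The crucial claim is then that every word of shape $(k,\dots,k)$ appearing in $L$ in this recursive-preimage position already appears in some $\omega^m(a)$; and this is where hypothesis $(iv)$ enters: a word of shape $(k,\dots,k)$ is built from its overlapping sub-blocks of shape $(2,\dots,2)$, all of which lie in $L \cap \A^{(2,\dots,2)} = \L_\omega \cap \A^{(2,\dots,2)}$ — but equality of the $(2,\dots,2)$-languages does not by itself force equality of the $(k,\dots,k)$-languages (that is exactly the subtlety illustrated by Remark~\ref{rmk:special-case}, where $L$ and $\L_\omega$ agree on small patterns yet differ globally). So the argument cannot be purely local; I expect the real mechanism is that the recursive preimage structure forces $v_n$ to lie not just in $L$ but in $\L_\omega$, by a boundedness-plus-pigeonhole argument: the preimages $v_n$ range over a set of words of bounded shape, so some word $w$ of bounded shape recurs infinitely often along the desubstitution tower as the relevant core, giving an "eventually periodic" desubstitution path, from which one extracts that $w$ (hence $u$) occurs in $\omega^m(a)$ for suitable $a, m$. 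I would formalize this with a König's lemma argument on the tree of desubstitutions, using expansivity to guarantee the tree has bounded branching at the relevant scale and primitivity to guarantee every letter is reachable. This is the step I expect to be the main obstacle, both in getting the desubstitution tower to exist (one must show every $u \in L$ does have a full chain of $\omega$-preimages in $L$, which follows from $L = \overline{\omega(L)}^{Fact}$ by a compactness argument) and in extracting a finite witness $\omega^m(a) \supseteq u$ from an infinite path; I would likely prove a small auxiliary lemma that $\{\omega^n(a) : a \in \A, n \in \N\}$, viewed up to the factor order, is cofinal in $L$ precisely under hypothesis $(iv)$, isolating the combinatorial heart.
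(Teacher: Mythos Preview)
Your plan for the chain $(i)\Rightarrow(ii)\Rightarrow(iii)$ and for the inclusion $\L_\omega\subseteq L$ matches the paper's, and your treatment of $(iii)\Rightarrow(iv)$ is in fact more careful than the paper's (which simply asserts that $\X_L=\X_\omega$ implies $L=\L_\omega$; your extendability argument via $L=\overline{\omega^m(L)}^{Fact}$ plus compactness is the right way to justify $L=\L_{\X_L}$).

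The problem is $(iv)\Rightarrow(i)$, where you badly overcomplicate matters. You do not need desubstitution towers, K\"onig's lemma, pigeonhole on recurring preimages, or an auxiliary cofinality lemma. The paper's argument is a single direct step: given $z\in L$, use expansivity to choose $m$ so large that $\shape(\omega^m(a))\geq\shape(z)$ for every letter $a\in\A$. Since $L=\overline{\omega^m(L)}^{Fact}$, the word $z$ occurs inside $\omega^m(v)$ for some $v\in L$; by the choice of $m$, $z$ can overlap at most two of the blocks $\omega^m(\cdot)$ in each coordinate direction, so $z$ already sits inside $\omega^m(u)$ for some subword $u$ of $v$ of shape exactly $(2,\dots,2)$. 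Now $u\in L\cap\A^{(2,\dots,2)}=\L_\omega\cap\A^{(2,\dots,2)}$ by hypothesis $(iv)$, and by primitivity there is an $\ell$ with $u$ a subword of $\omega^\ell(a)$ for every $a$; hence $z$ is a subword of $\omega^{m+\ell}(a)$, and $z\in\L_\omega$.

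Your worry that ``equality of the $(2,\dots,2)$-languages does not by itself force equality of the $(k,\dots,k)$-languages'' is correct in general but irrelevant here: one never tries to rebuild $z$ from its own $(2,\dots,2)$-subblocks. The whole point is that the $\omega^m$-preimage $u$ can be taken of shape $(2,\dots,2)$ in one shot, and hypothesis $(iv)$ is then applied to $u$, not to $z$. Remark~\ref{rmk:special-case} is not an obstruction either: in that example the word $ab\in L\setminus\L_\omega$ has shape $(2)$, so that $L$ fails condition $(iv)$ --- consistent with the lemma, not in tension with it.
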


\begin{proof}
    ($\L_\omega\subseteq L$). 
    Notice that, recursively, $L=\overline{\omega^m(L)}^{Fact}$ for every $m\geq1$.  For every
    $a\in\A$ and $m\geq1$, the $d$-dimensional word $\omega^m(a)$ is in the
    language $L$.

    (i) $\implies$ (ii). 
    We have that $\X_{\L_\omega}=\X_\omega$ is the substitutive shift of
    $\omega$ which is minimal since $\omega$ is primitive.
    (ii) $\implies$ (iii).
    We know that $\L_\omega\subseteq L$.
    Therefore $X_\omega\subseteq\X_L$.
    Since $\X_L$ is minimal, we conclude that $X_\omega=\X_L$.
    (iii) $\implies$ (iv).
    If $X_\omega=\X_L$ then $L=\L_\omega$.
    Therefore, $L\cap\A^{(2,\dots,2)} = \L_\omega\cap\A^{(2,\dots,2)}$.

    (iv) $\implies$ (i).
    ($L\subseteq\L_\omega$). 
    Let $z\in L$. Since $\omega$ is expansive,
    let $m\in\N$ such that the image of every letter
    $a\in\A$ by $\omega^m$ is larger than $z$, that is, 
    $\shape(\omega^m(a))\geq\shape(z)$ for all $a\in\A$.
    We have $z\in\overline{\omega^m(L)}^{Fact}$.
    By the choice of $m$, $z$ can not overlap more than two block
    $\omega^m(a)$ in the same direction. Then, there exists a word $u\in L$ of
    shape $(2,\dots,2)$ such that $z$ is a subword of $\omega^m(u)$.
    From the hypothesis, every word of shape $(2,\dots,2)$ that appear in $L$
    also appear in $\L_\omega$. Therefore $u\in\L_\omega$.
    Since $\omega$ is primitive, there exists $\ell$ such that every word $u$
    of shape $(2,\dots,2)$ that appear in $\L_\omega$ 
    is also a subword of $\omega^\ell(a)$ for every $a\in\A$.
    Therefore, $z$ is a subword of $\omega^{m+\ell}(a)$ for every $a\in\A$.
    Then $z\in\L_\omega$ and $L\subseteq\L_\omega$.
\end{proof}

\subsection{$d$-dimensional recognizability and aperiodicity}\label{sec:recognizability}

The definition of recognizability dates back to the work of Host, Quéffelec and
Mossé \cite{MR1168468}. See also \cite{akiyama_mosse_2017} who propose a
completion of the statement and proof for B. Mossé's unilateral recognizability
theorem.
The definition introduced below is based on work of Berthé, Steiner and
Yassawi \cite{berthe_recognizability_2017} on the recognizability in the case
of $S$-adic systems where more than one substitutions are involved.

Let $X\subseteq\A^{\Z^d}$ and
$\omega:X\to\B^{\Z^d}$ be a $d$-dimensional morphism.
If $y\in\overline{\omega(X)}^{\sigma}$, i.e.,
$y=\sigma^\bk\omega(x)$ for some $x\in X$ and $\bk\in\Z^d$, where $\sigma$ is
the $d$-dimensional shift map, we say that $(\bk,x)$ is a
\emph{$\omega$-representation of $y$}. We say that it is \emph{centered} if
$y_\zero$ lies inside of the image of $x_\zero$, i.e., if
$\zero\leq\bk<\shape(\omega(x_\zero))$ coordinate-wise.
We say that $\omega$ is \emph{recognizable in $X\subseteq\A^{\Z^d}$}
if each $y\in\B^{\Z^d}$ has at most one centered $\omega$-representation 
$(\bk,x)$ with $x\in X$.


The unique composition property and conditions \ref{cond:C1} and
\ref{cond:C2} can be stated in terms of subshifts, $d$-dimensional morphisms
and recognizability.

\begin{proposition}\label{prop:expansive-recognizable-aperiodic}
    Let $\omega:\A\to\A^{*^d}$ be an expansive $d$-dimensional morphism.
    Let $X\subseteq\A^{\Z^d}$ be a self-similar subshift such that
    $\overline{\omega(X)}^\sigma=X$.
    If $\omega$ is recognizable in $X$, then $X$ is aperiodic.
\end{proposition}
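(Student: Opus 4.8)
The plan is to argue by contradiction. Suppose $X$ is not aperiodic; since $X=\overline{\omega(X)}^\sigma\neq\varnothing$, it is nonempty, so nonaperiodicity means there is a tiling $x\in X$ with a nontrivial period $\bn\in\Z^d\setminus\{\zero\}$, i.e. $\sigma^\bn(x)=x$. I would like to derive from this a contradiction with recognizability of $\omega$ in $X$. The key mechanism is that $\omega$ is expansive, so iterating it stretches shapes without bound, and a period $\bn$ that survives intact cannot be accommodated by the rigid block structure of $\omega^m(x)$ once the blocks are much larger than $\bn$.

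First I would observe that because $\overline{\omega(X)}^\sigma=X$, we may repeatedly desubstitute: $x$ has a centered $\omega$-representation $(\bk^{(1)},x^{(1)})$ with $x^{(1)}\in X$, and by recognizability this is unique; applying the same to $x^{(1)}$ and iterating, we obtain for each $m\geq 1$ a tiling $x^{(m)}\in X$ and a vector $\bk$ with $x=\sigma^{\bk}\omega^m(x^{(m)})$, where $\omega^m$ is recognizable in $X$ as well (uniqueness of centered representations composes). The point is that the decomposition of $\Z^d$ into the translated images $\{\,\text{copy of }\omega^m(x^{(m)}_{\bj})\,\}$ under this representation is \emph{canonical} — it is the unique partition of $\Z^d$ into level-$m$ supertiles compatible with $x$ — precisely because $\omega^m$ is recognizable in $X$.

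Next I would use the period. Since $\sigma^\bn x = x$, applying $\sigma^\bn$ to a centered $\omega^m$-representation $(\bk,x^{(m)})$ of $x$ produces another $\omega^m$-representation of $x$, namely $(\bk-\bn+\text{(correction)}, \sigma^{\bn'}x^{(m)})$ for an appropriate shift $\bn'$; more cleanly, $\sigma^\bn$ must carry the canonical level-$m$ supertile partition of $x$ to itself (as that partition is uniquely determined by $x$), hence $\bn$ maps one supertile exactly onto another, so $\bn$ lies in the sublattice generated by the offsets between supertile positions. Concretely, there is $\bn^{(m)}\in\Z^d$ with $\sigma^{\bn^{(m)}}x^{(m)}=x^{(m)}$ and $\bn = L_m\,\bn^{(m)}$ in the sense that the displacement $\bn$ of $x$ corresponds to the displacement $\bn^{(m)}$ of $x^{(m)}$ scaled by the block structure of $\omega^m$. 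In particular each nonzero coordinate of $\bn$ is at least $\min(\shape(\omega^m(a)))$ in absolute value for suitable $a$, or $\bn^{(m)}=\zero$. Since $\omega$ is expansive, $\min_a(\shape(\omega^m(a)))\to\infty$ with $m$, so for $m$ large the scaling forces $\bn^{(m)}=\zero$, whence $\bn=\zero$, contradicting that $\bn$ is a nontrivial period. This contradiction shows $X$ is aperiodic.

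The main obstacle I expect is making the bookkeeping in the middle step rigorous: one must show carefully that a period of $x$ genuinely descends to a period of $x^{(m)}$, which requires that the canonical supertile partition is shift-equivariant and that $\omega^m$ restricted to $X$ is injective enough (recognizable) that ``$\sigma^\bn$ permutes the supertiles'' implies ``$\bn$ is a lattice vector of the supertile grid.'' Since $\omega$ need not be uniform on $X$ (the images of different letters may have different shapes), ``the supertile grid'' is not a genuine lattice, so the cleanest route is probably: a period $\bn\neq\zero$ together with minimality/expansiveness forces the level-$m$ supertile containing $\zero$ to be translated by $\bn$ onto another supertile of the \emph{same} type, and comparing the two centered $\omega^m$-representations of $x$ (the given one, and its $\sigma^\bn$-translate) contradicts uniqueness unless $\bn$ is absorbed, i.e. unless the supertile through $\zero$ is larger than $|\bn|$ in every coordinate in which $\bn$ is nonzero. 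Expansiveness guarantees that for $m$ large enough the supertile through $\zero$ exceeds any fixed $\bn$ in size, giving the contradiction directly, and this avoids ever having to descend the period to $x^{(m)}$.
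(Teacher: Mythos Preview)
Your final paragraph lands on exactly the paper's argument: pick $m$ so large that every level-$m$ supertile has shape exceeding $|\bn|$ coordinatewise, and then exhibit two distinct centered $\omega^m$-representations of the same point, contradicting recognizability of $\omega^m$ (and hence of $\omega$). The detour through ``descending the period to $x^{(m)}$'' is unnecessary and the paper does not use it.

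However, your phrasing has the logic inverted. You write that comparing the two representations ``contradicts uniqueness \emph{unless} $\bn$ is absorbed, i.e.\ unless the supertile through $\zero$ is larger than $|\bn|$.'' It is the other way around: when the supertile is \emph{small}, re-centering the $\sigma^\bn$-translate produces $(\bk',\sigma^{\bj}x^{(m)})$ with $\bj\neq\zero$, and uniqueness then merely forces $\sigma^{\bj}x^{(m)}=x^{(m)}$ --- no contradiction, just a descended period. It is precisely when the supertile is \emph{large} that $(\bk+\bn,x^{(m)})$ is already centered, giving two distinct centered $\omega^m$-representations of the same configuration and hence the contradiction. Your concluding sentence is therefore correct, but the ``unless'' clause preceding it says the opposite of what you need.

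One further detail the paper handles and you omit: even when $\shape(\omega^m(x^{(m)}_\zero))$ exceeds $|\bn|$, the particular centered offset $\bk$ may sit near the boundary of the supertile, so that $\bk+\bn$ falls outside $[\zero,\shape(\omega^m(x^{(m)}_\zero)))$. The paper fixes this by passing from $y$ to a suitable shift $\sigma^{\bu-\bk}y$: since the supertile is large, one can choose $\bu$ so that both $\bu$ and $\bu+\bn$ lie in the admissible range, and then $(\bu,x^{(m)})$ and $(\bu+\bn,x^{(m)})$ are two distinct centered $\omega^m$-representations of $\sigma^{\bu-\bk}y$.
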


\begin{proof}
    Suppose that there exists a periodic tiling $y\in X$ with
    period $\bp\in\Z^d\setminus\{(0,0)\}$ satisfying $\sigma^\bp y=y$.
    Since $\omega$ is expansive,
    let $m\in\N$ such that the shape of the image of every letter
    $a\in\A$ by $\omega^m$ is large enough, that is,
    $\shape(\omega^m(a))\geq\bp$ for every
    letter $a\in\A$.
    By hypothesis,
    every $y\in X$ has a $\omega$-representation. 
    Recursively, there
    exists a $\omega^m$-representation $(\bk,x)$ of $y$ satisfying
    $y=\sigma^\bk\omega^m(x)$.
    We may assume that it is centered since $X$ is shift-invariant.
    By definition of centered representation,
    for every $\bu\in\Z^d$ such that $\zero\leq\bu<\shape(\omega^m(x_\zero))$,
    $(\bu,x)$ is a centered $\omega^m$-representation of 
    $\sigma^\bu\omega^m(x)=\sigma^{\bu-\bk}y$.
    By the choice of $m$, there exists $\bu\in\Z^d$ such that
    $\zero\leq\bu<\shape(\omega^m(x_\zero))$
    and
    $\zero\leq\bu+\bp<\shape(\omega^m(x_\zero))$.
    Therefore
    $(\bu,x)$ is a centered $\omega^m$-representation of 
    $\sigma^\bu\omega^m(x)=\sigma^{\bu-\bk}y$
    and
    $(\bu+\bp,x)$ is a centered $\omega^m$-representation of 
    $\sigma^{\bu+\bp}\omega^m(x)
    =\sigma^{\bu+\bp-\bk}y
    =\sigma^{\bu-\bk}\sigma^\bp y
    =\sigma^{\bu-\bk}y$.
    Therefore, $\omega^m$ is not recognizable
    which implies that $\omega$ is not recognizable
    which is a contradiction.
    We conclude that there is no periodic tiling $y\in X$.
\end{proof}

Note that the converse of
Proposition~\ref{prop:expansive-recognizable-aperiodic} for tilings was proved
in \cite{MR1637896} as a generalization of a result of Mossé \cite{MR1168468}
who showed that recognizability and aperiodicity are equivalent for primitive
substitutive sequences.

\begin{lemma}\label{lem:aperiodic-implies-aperiodic}
    Let $\omega:X\to Y$ be some $d$-dimensional morphism between two
    subshifts $X$ and $Y$.
    If $Y$ is aperiodic, then $X$ is aperiodic.
\end{lemma}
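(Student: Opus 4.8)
The plan is to prove the contrapositive: if $X$ contains a configuration with a nonzero period, then so does $Y$, contradicting the aperiodicity of $Y$. Note that, in contrast with Proposition~\ref{prop:expansive-recognizable-aperiodic}, this implication uses no recognizability hypothesis; it is the ``easy half'' of the equivalence between recognizability and aperiodicity.

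Assume $x\in X$ satisfies $\sigma^{\bp}x=x$ for some $\bp=(p_1,\dots,p_d)\in\Z^d\setminus\{\zero\}$. I claim that $\omega(x)$ is a periodic point of $Y$. First, $\omega(x)\in Y$ is immediate: $\omega(X)\subseteq Y$ and $Y$ is shift-invariant, so in fact $\overline{\omega(X)}^{\sigma}\subseteq Y$. To see that $\omega(x)$ is periodic, I would record a structural fact about $d$-dimensional morphisms on subshifts: since $\omega$ is compatible with concatenation, whenever two cells of $x$ are adjacent in the direction $\be_i$ their images under $\omega$ have the same shape in every coordinate $\neq i$; connecting any two cells sharing the same $j$-th coordinate by a path of unit steps in the remaining directions then shows that the $j$-th entry of $\shape(\omega(x_\bn))$ is a function $h_j(n_j)$ of $n_j$ alone. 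In other words $\omega$ acts cellwise by an independent dilation of each coordinate axis, governed by the functions $h_1,\dots,h_d$, which take positive values since the images of letters are nonempty words. Let $H_j\colon\Z\to\Z$ be the associated strictly increasing partial-sum function, $H_j(0)=0$ and $H_j(m+1)-H_j(m)=h_j(m)$, so that, for a fixed anchoring of $\omega(x)$, the block $\omega(x_\bn)$ occupies the box which in direction $j$ is the interval $\llbracket H_j(n_j),\,H_j(n_j+1)-1\rrbracket$.

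The period $\bp$ of $x$ forces each $h_j$ to be $p_j$-periodic (because $x_{\bn+\bp}=x_\bn$ implies $\shape(\omega(x_{\bn+\bp}))=\shape(\omega(x_\bn))$), hence $H_j(m+p_j)=H_j(m)+q_j$ for all $m$, where $q_j:=H_j(p_j)$. Put $\mathbf{q}=(q_1,\dots,q_d)$. Since the $h_j$ are positive and $\bp\neq\zero$, some $p_j\neq 0$, and then $|q_j|\geq|p_j|\geq 1$, so $\mathbf{q}\neq\zero$. Finally, a direct index computation using $H_j(m+p_j)=H_j(m)+q_j$ shows that the cell of $x$ underlying position $\bm+\mathbf{q}$ of $\omega(x)$ is exactly the $\bp$-translate of the cell underlying position $\bm$, with the same offset inside the block; combined with $x_{\bn+\bp}=x_\bn$ this gives $\omega(x)_{\bm+\mathbf{q}}=\omega(x)_\bm$ for every $\bm\in\Z^d$, i.e.\ $\sigma^{\mathbf{q}}\omega(x)=\omega(x)$. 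Thus $\omega(x)\in Y$ has a nonzero period, which is the desired contradiction.

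I expect the only real obstacle to be the bookkeeping: proving carefully that a $d$-dimensional morphism dilates each coordinate axis separately (this trivializes when $d=1$, where the image of a periodic bi-infinite word is patently periodic) and then the index manipulation that exhibits the period $\mathbf{q}$ of $\omega(x)$; everything else is formal. One should also dispatch the degenerate reading when $X=\varnothing$: if ``aperiodic'' is understood to include nonemptiness, this is handled by the contexts in which the lemma is applied (where $Y=\overline{\omega(X)}^{\sigma}$, so $Y\neq\varnothing$ forces $X\neq\varnothing$), and otherwise the argument above, which only concerns periodic points, already suffices.
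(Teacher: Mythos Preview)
Your proof is correct and follows the same contrapositive approach as the paper, which simply states that if $X$ contains a periodic tiling $x$ then $\omega(x)\in Y$ is periodic. You have supplied the bookkeeping (the coordinate-wise shape functions $h_j$ and the explicit period $\mathbf{q}$) that the paper leaves implicit.
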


\begin{proof}
    If $X$ contains a periodic tiling $x$, then
    $\omega(x)\in Y$ is periodic.
%
\end{proof}

\section{A sufficient condition for recognizability and surjectivity}
\label{sec:sufficient-condition}

The goal of this section is to show that under some hypothesis made on a Wang
tile set $\T$, namely the existence of markers, there exists another set $\S$
of Wang tiles and a nontrivial recognizable $2$-dimensional morphism
$\Omega_\S\to\Omega_\T$ that is onto up to a shift.  More precisely, every Wang
tiling by $\T$ is up to a shift the image under a nontrivial $2$-dimensional
morphism $\omega$ of a unique Wang tiling in $\Omega_\S$.

\subsection{A sufficient condition for recognizability}

The next lemma provides a sufficient condition for recognizability of
$d$-dimensional morphism. It is weak in the sense that it applies to morphism
whose images of letters are letters or a domino in a given direction, but it is
sufficient for our needs.

\begin{lemma}\label{lem:recognizable}
    Let $d\geq1$ and $i$ such that $1\leq i\leq d$.
Let $X\subseteq\A^{\Z^d}$ and
$\omega:X\to\B^{\Z^d}$ be a $d$-dimensional morphism such that
the image of letters are letters or dominoes in the direction $\be_i$.
If $\omega|_\A$ is injective and there exists a subset $M\subset \B$ such that
\begin{equation}\label{eq:markers-on-right}
    \omega(\A)
    \subseteq (\B\setminus M)
    \cup \left((\B\setminus M)\odot^i M\right),
\end{equation}
or
\begin{equation}\label{eq:markers-on-left}
    \omega(\A)
    \subseteq (\B\setminus M)
    \cup \left(M\odot^i (\B\setminus M)\right),
\end{equation}
then $\omega$ is recognizable in $X$.
\end{lemma}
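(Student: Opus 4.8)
The plan is to show directly that, given $y \in \B^{\Z^d}$, the marker set $M$ lets us reconstruct where the domino cuts happen along direction $\be_i$, and hence recover the centered $\omega$-representation uniquely. Assume condition \eqref{eq:markers-on-right} holds; the case \eqref{eq:markers-on-left} is symmetric (replace $\be_i$ by $-\be_i$, i.e. read in the opposite direction). Suppose $y$ has two centered $\omega$-representations $(\bk,x)$ and $(\bk',x')$ with $x,x'\in X$, so $y=\sigma^\bk\omega(x)=\sigma^{\bk'}\omega(x')$. I want to conclude $\bk=\bk'$ and $x=x'$.

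First I would observe that $\omega(x)$ and $\omega(x')$, as elements of $\B^{\Z^d}$, are tilings of $\Z^d$ by the ``bricks'' $\omega(a)$ for $a\in\A$, where each brick is either a single cell or a domino occupying positions $\bn,\bn+\be_i$. The key step is to show that the partition of $\Z^d$ into these bricks is intrinsic to the configuration $\omega(x)$, independent of the representation. By \eqref{eq:markers-on-right}, a cell of $\omega(x)$ carries a symbol in $M$ if and only if it is the $\be_i$-end (the ``$+\be_i$ half'') of a domino brick, and moreover its $-\be_i$-neighbour then carries a symbol in $\B\setminus M$ that is the first half of that same domino; conversely every cell with symbol in $\B\setminus M$ is either a singleton brick or the first half of a domino, the latter happening exactly when its $+\be_i$-neighbour lies in $M$. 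Thus: reading $\omega(x)$, a cell at position $\bm$ is the start of a domino brick iff $(\omega(x))_{\bm}\notin M$ and $(\omega(x))_{\bm+\be_i}\in M$; it is the end of a domino brick iff $(\omega(x))_{\bm}\in M$; and it is a singleton brick otherwise. This criterion depends only on the symbols of $y$ up to the shift, so the brick partition induced by $(\bk,x)$ and the one induced by $(\bk',x')$ on $y$ must coincide as partitions of $\Z^d$ — and in particular the brick containing $\zero$ is the same.

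Next, since both representations are \emph{centered}, $\zero$ lies in the image $\omega(x_\zero)$ shifted by $\bk$, i.e. $\zero-(-\bk)=\bk$ is a position inside the brick $\omega(x_\zero)$, with $\zero\le\bk<\shape(\omega(x_\zero))$; likewise for $(\bk',x')$. The brick of $y$ containing $\zero$ is a single well-defined region $R$ (a single cell, or a domino $\{\bm,\bm+\be_i\}$), and in either case there is a unique $a\in\A$ with $\omega(a)$ of that shape placed so that $R=\bm'+\{\text{cells of }\omega(a)\}$ — the offset $\bm'$ (hence $\bk=-\bm'$) is forced by the centering condition $\zero\in R$, and the letter is forced because $\omega|_\A$ is injective so distinct letters with the same image shape give distinct symbol content on the brick. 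Hence $\bk=\bk'$ and $x_\zero=x'_\zero$. Running the same argument at every position $\bn\in\Z^d$ — apply it to $\sigma^{\bn}y$, whose centered representations are obtained from those of $y$ by translating, using that the brick partition is shift-covariant — gives $x_\bn=x'_\bn$ for all $\bn$, so $x=x'$ and $\omega$ is recognizable in $X$.

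The main obstacle, and the only part needing real care, is the reconstruction-of-bricks step: one must check that \eqref{eq:markers-on-right} genuinely forbids configurations in which the marker/non-marker pattern along a $\be_i$-line is ambiguous — e.g. that two consecutive markers, or a marker whose $-\be_i$-neighbour is also a marker, cannot occur, so that the ``start iff next cell is in $M$, end iff in $M$'' rule is consistent and leaves no cell unassigned or doubly-assigned. This follows because every cell of $\omega(x)$ belongs to exactly one brick of the form $\omega(a)$, and \eqref{eq:markers-on-right} says markers appear only as the trailing cell of a domino brick; I would just make this bookkeeping explicit. Everything else (injectivity of $\omega|_\A$ pinning down the letter, centering pinning down $\bk$) is routine.
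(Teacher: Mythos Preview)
Your proof is correct and rests on the same key observation as the paper's: under \eqref{eq:markers-on-right}, a letter of $M$ can only occur as the trailing cell of a domino, so the brick decomposition of $\omega(x)$ is visible in $y$ itself. The packaging differs, however. The paper argues in two local steps: first a case split on whether $y_{\zero}\in M$ to pin down $\bk=\bk'$ (with a short contradiction for one subcase), and then, assuming $\omega(x)=\omega(x')$, a minimal-counterexample argument on $\Vert\ba\Vert_\infty$ to force $x=x'$. You instead establish globally that the full brick partition is reconstructible from $y$, and then read off both $\bk$ and every $x_{\bn}$ at once; this is conceptually cleaner, while the paper's version is more explicit about the bookkeeping.

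One small point to tighten: in your last step, shifting $y$ by $\bn$ does not literally shift $x$ by $\bn$ (the re-indexing depends on the brick sizes along the way), so ``centered representations are obtained by translating'' is a bit loose. The fix is immediate given what you have already proved: once $\bk=\bk'$ you have $\omega(x)=\omega(x')$ as configurations, and the common brick partition together with the anchor ``the brick containing position $\bk$ is the image of $x_{\zero}$'' gives a well-defined bijection between $\Z^d$ and bricks that is the same for both $x$ and $x'$; injectivity of $\omega|_{\A}$ on each brick then yields $x_{\bn}=x'_{\bn}$ for all $\bn$ directly, without needing to re-center at each shift.
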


Informally, the letters $m\in M$ have the role of \emph{markers}: when we see
them in the image under $\omega$, we know that they must appear as the left
(resp. right) part of a domino.
A formal definition of markers is given in the next section
(Definition~\ref{def:markers}).

\begin{proof}
    Let $(\bk,x)$ and $(\bk',x')$ be two
    centered $\omega$-representations of $y\in\B^{\Z^d}$ with
    $\bk,\bk'\in\Z^d$ and $x,x'\in X$.
    We want to show that they are equal.

    Since the image of a letter under $\omega$ is a letter or a domino in the
    direction $\be_i$, then $\bk,\bk'\in\{\zero,\be_i\}$.
    If $y_\zero\in M$, then $y_\zero$ appears as the left or right part of a
    domino and thus $\bk=\bk'=\be_i$
    if Equation~\eqref{eq:markers-on-right} holds
    or $\bk=\bk'=\zero$
    if Equation~\eqref{eq:markers-on-left} holds.

    Suppose now that $y_\zero\in \B\setminus M$. 
    If Equation~\eqref{eq:markers-on-right} holds,
    then $\bk=\bk'=\zero$.
    Suppose that Equation~\eqref{eq:markers-on-left} holds.
    By contradiction, suppose that $\bk\neq\bk'$ 
    and assume without lost of generality that
    $\bk=\zero$ and $\bk'=\be_i$.
    This means that $\omega(x'_\zero)=y_{-\be_i} \odot^i y_\zero$ is
    a domino in the direction $\be_i$. 
    Since $y_\zero\in \B\setminus M$, we must have that $y_{-\be_i}\in M$
    is a left marker.
    This is impossible as
    $\omega(x_{-\be_i})=y_{-\be_i}\in \B\setminus M$
    or
    $\omega(x_{-\be_i})=y_{-2\be_i} \odot^i y_{-\be_i}
    \in M\odot^i (\B\setminus M)$.
    Therefore, we must have $\bk=\bk'$ and
    $\omega(x)=\omega(x')$.

    Suppose by contradiction that $x\neq x'$.
    Let $\ba=(a_1,\dots,a_d)\in\Z^d$ be some minimal vector with respect to
    $\Vert\ba\Vert_\infty$ such that $x_\ba\neq x'_\ba$.
    Injectivity of $\omega$ implies that $\omega(x_\ba)$ and
    $\omega(x'_\ba)$ must have different shapes.
    Suppose without lost of generality that
    $\omega(x_\ba)\in\B $ and
    $\omega(x'_\ba)=b\odot^i c\in\B\odot^i\B$.
    We need to consider two cases: $a_i\geq 0$ and $a_i<0$.

    Suppose $a_i\geq 0$.
    We must have that Equation~\eqref{eq:markers-on-right} holds.
    We have $\omega(x_\ba)=b\in\B\setminus M$ and $c\in M$.
    But then $\omega(x_{\ba+\be_i})=c$
    or $\omega(x_{\ba+\be_i})=c\odot^i d$ for some 
    $c\in\B\setminus M$ and $d\in\B$ which is a contradiction.

    Suppose $a_i<0$.
    We must have that Equation~\eqref{eq:markers-on-left} holds.
    We have $\omega(x_\ba)=c\in\B\setminus M$ and $b\in M$.
    But then $\omega(x_{\ba-\be_i})=b$
    or $\omega(x_{\ba+\be_i})=d\odot^i b$ for some 
    $b\in\B\setminus M$ and
    $d\in\B$ which is a contradiction.
    We conclude that $x=x'$.
\end{proof}

\subsection{A sufficient condition for surjectivity up to a shift}


Theorem~\ref{thm:exist-homeo} is the key result in this contribution
since it is used in Section~\ref{sec:desubstitionofU} to desubstitute tilings
in $\Omega_\U$ 
into tilings in $\Omega_\V$ for some Wang tile set $\V$
and in
Section~\ref{sec:desubstitionofV} to desubstitute tilings of $\Omega_\V$
into tilings in $\Omega_\W$ for some Wang tile set $\W$.
It can be seen as a $2$-dimensional generalization of the notion of derived
sequence introduced in \cite{MR1489074} for the $1$-dimensional substitutive
case. While for the $1$-dimensional substitutive
case the derived sequence is obtained by 
considering the return words to a single letter, here it gives sufficient
condition for the existence of a derived tiling by considering the return
words to a \emph{subset of letters} $\T\setminus M$ for some
$M\subset\T$.

\begin{definition}\label{def:markers}
    Let $\T$ be a Wang tile set 
    and let $\Omega_\T$ be its Wang shift.
    A nonempty proper subset $M\subset\T$ is called \emph{markers in the
    direction $\be_1$} if
    \begin{equation}
        M\odot^1 M, \qquad
        M\odot^2 (\T\setminus M), \qquad
        (\T\setminus M) \odot^2 M
    \end{equation}
    are forbidden in $\Omega_\T$.
    It is called \emph{markers in the direction $\be_2$} if
    \begin{equation}
        M\odot^2 M, \qquad
        M\odot^1 (\T\setminus M), \qquad
        (\T\setminus M) \odot^1 M
    \end{equation}
    are forbidden in $\Omega_\T$.
\end{definition}

The markers in the direction $\be_1$ (resp. $\be_2$) appear as nonadjacent
columns (resp. rows) of tiles in a tiling.
The existence of markers allows to desubstitute tilings uniquely by
$2$-dimensional morphism that are essentially $1$-dimensional.

\begin{theorem}\label{thm:exist-homeo}
    Let $\T$ be a Wang tile set 
    and let $\Omega_\T$ be its Wang shift.
    If there exists a subset
    $M\subset\T$ 
    of markers in the direction 
    $\be_i\in\{\be_1,\be_2\}$,
    then there exists
    a Wang tile set $\S$
    and a $2$-dimensional morphism
    $\omega:\Omega_\S\to\Omega_\T$
    such that 
    \begin{equation}
        \omega(\S)\subseteq (\T\setminus M)\cup 
        \left((\T\setminus M)\odot^i M\right)
    \end{equation}
    which is recognizable in the Wang shift $\Omega_\S$
    and surjective up to a shift, i.e.,
    $\omega(\Omega_\S)\cup\sigma^{\be_i}\omega(\Omega_\S)=\Omega_\T$.
\end{theorem}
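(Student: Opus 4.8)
The plan is to build $\S$ explicitly as a set of "return patterns" to the complement of $M$, and to let $\omega$ be the tautological map that reinserts the marker tile next to each returned tile. Concretely, suppose $\be_i = \be_1$ (the case $\be_i = \be_2$ follows by applying the $\be_1$ case to the dual tile set $\T^*$, using the duality identities $(\T\boxminus^1\S)^* = \T^*\boxminus^2\S^*$ and the fact that markers in direction $\be_2$ for $\T$ are markers in direction $\be_1$ for $\T^*$). Set
\[
    \S = (\T\setminus M) \cup \big((\T\setminus M)\boxminus^1 M\big),
\]
viewed as a Wang tile set: a tile of $\S$ is either a single tile of $\T\setminus M$ (keeping its four colors) or a horizontal domino $t\boxminus^1 m$ with $t\in\T\setminus M$ and $m\in M$, whose left color is $\scleft(t)$, whose right color is $\scright(m)$, and whose top/bottom colors are the concatenations $\sctop(t)\sctop(m)$ and $\scbottom(t)\scbottom(m)$ (new colors formed as pairs). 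The morphism $\omega:\S\to\T^{*^2}$ sends a single tile to itself and a domino tile $t\boxminus^1 m$ to the $2\times 1$ word $(t\;m)$ in direction $\be_1$. By construction $\omega$ is a $2$-dimensional morphism on $\S^{*^2}$ restricted to the relevant domain, $\omega|_\S$ is injective, and $\omega(\S)\subseteq (\T\setminus M)\cup\big((\T\setminus M)\odot^1 M\big)$, so once we know $\omega(\Omega_\S)\subseteq\Omega_\T$, recognizability is immediate from Lemma~\ref{lem:recognizable} with the marker set $M$ in the role of the subset of $\B=\T$ and Equation~\eqref{eq:markers-on-right}.

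The two substantive things to check are (a) that $\omega$ maps valid tilings to valid tilings, i.e.\ $\omega(\Omega_\S)\subseteq\Omega_\T$, and (b) surjectivity up to a shift, $\omega(\Omega_\S)\cup\sigma^{\be_1}\omega(\Omega_\S)=\Omega_\T$. For (a): horizontal adjacency inside $\omega$'s image is correct because within a domino tile the internal edge $\scright(t)=\scleft(m)$ is forced by well-definedness of $t\boxminus^1 m$, and between consecutive images the matching of $\S$-tiles transfers verbatim. Vertical adjacency is the point where the marker hypothesis enters: stacking two tiles of $\S$ vertically forces the paired top/bottom colors to agree, which unpacks to agreement of the $\T$-colors; but one must rule out a vertical misalignment in which a domino tile of $\S$ sits above a single tile of $\S$ (or vice versa), i.e.\ a configuration where a column index that is "inside a domino" in one row is "a single $\T\setminus M$ tile" in the row above. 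This is exactly excluded by the hypothesis that $M\odot^2(\T\setminus M)$ and $(\T\setminus M)\odot^2 M$ are forbidden in $\Omega_\T$: a marker can never be directly above or below a non-marker, so marker columns propagate vertically, and the decomposition of each row into singletons and dominoes is the same across all rows. I expect this bookkeeping — proving that the $\S$-tiling structure forces a globally consistent column partition — to be the main obstacle, and the cleanest way to run it is: given $x\in\Omega_\T$, the hypotheses $M\odot^1 M$, $M\odot^2(\T\setminus M)$, $(\T\setminus M)\odot^2 M$ forbidden imply that $\{n\in\Z : x_{(n,j)}\in M\}$ is independent of $j$ and consists of isolated integers; then each maximal run of non-marker columns between (or before) marker columns, read together with the marker column immediately to its left — wait: with the hypotheses as stated, each marker column has, immediately to \emph{both} sides, non-marker columns, so we pair each marker with the non-marker column on (say) its left, and the leftover non-marker columns with no marker to their right become singleton $\S$-tiles.

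For (b): given $x\in\Omega_\T$, use the column partition just described. If $0$ is a non-marker column or a column that gets paired as the left half of a domino, then $x$ itself lies in $\omega(\Omega_\S)$ for the induced $\S$-tiling; if $0$ is a marker column, then it is the right half of a domino, so $\sigma^{-\be_1}x\in\omega(\Omega_\S)$, equivalently $x\in\sigma^{\be_1}\omega(\Omega_\S)$. (One must also observe that the induced preimage tiling really is in $\Omega_\S$, i.e.\ is a valid Wang tiling of $\Z^2$ by $\S$ — this is the converse of the check in (a) and uses the same forbidden-pattern facts, now read as guaranteeing that adjacent $\S$-tiles match.) Finally, recognizability in $\Omega_\S$: apply Lemma~\ref{lem:recognizable} verbatim, since $\omega|_\S$ is injective, images of tiles are tiles or dominoes in direction $\be_i$, and Equation~\eqref{eq:markers-on-right} holds with this $M$. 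This completes the proof; the non-routine content is entirely the verification that the marker hypothesis pins down a unique column decomposition that is coherent across all rows.
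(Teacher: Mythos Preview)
Your approach is the paper's: construct $\S$ from non-marker singletons together with the fused dominoes $(\T\setminus M)\boxminus^i M$, let $\omega$ be the tautological expansion, obtain recognizability from Lemma~\ref{lem:recognizable}, and for surjectivity decompose any $y\in\Omega_\T$ along its marker columns (which, by the marker hypothesis, form a set $H\times\Z$ with $1\notin H-H$). The only structural difference is that the paper parametrizes $\S$ by a surrounding radius $r$, writing $\S=K_r\cup P_r$; this does not affect the theorem but trims $\S$ down in the later applications.

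One point in your part (a) is muddled: you invoke the forbidden patterns $M\odot^2(\T\setminus M)$ and $(\T\setminus M)\odot^2 M$ in $\Omega_\T$ to exclude a domino $\S$-tile sitting vertically above a singleton $\S$-tile, but that is circular --- at that stage you have not yet placed the image inside $\Omega_\T$. The exclusion is automatic from the Wang constraints on $\S$ itself: domino tiles carry paired top/bottom colors (genuinely new symbols), disjoint from the single colors borne by singleton tiles, so any valid $\S$-tiling already forces each column to be homogeneous and $\omega$ is well-defined there without appealing to the marker hypothesis. That hypothesis enters only in (b), exactly where you use it.
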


\begin{proof}
    For every $r\in\N$, we define the set of dominoes in the direction
    $\be_i$ that admits a surrounding of radius $r$ in $\Omega_\T$:
    \[
    D_{\T,\be_i,r}
        = \left\{u\odot^i v \,\middle|\, u\odot^iv \text{ admits a surrounding
        of radius $r$ in } \Omega_\T\right\}.
    \]
    Notice that since $M$ is a set of markers, there exists $r\in\N$ such that
    $D_{\T,\be_i,r}\cap(M\odot^i M)=\varnothing$.
    Now let $r\in\N$ be any nonnegative integer and
    let $P_r$, $K_r$ be the following sets:
    \begin{align*}
        P_r &= \left\{u\boxminus^i v \,\middle|\, 
            u\in\T\setminus M, v\in M 
            \text{ and } 
            u\odot^i v \in D_{\T,\be_i,r}\right\},\\
        K_r &= \left\{u\in\T\setminus M\,\middle|\, \text{ there exists }
        v\in\T\setminus
            M \text{ such that } u\odot^iv \in D_{\T,\be_i,r}\right\}.
    \end{align*}
    Let $\S=K_r\cup P_r$ be a Wang tile set
    and $\Omega_\S$ be its Wang shift.
    Let $\omega:\Omega_\S\to\Omega_\T$ be the $2$-dimensional morphism
    defined by
    \begin{equation}\label{eq:omegaStoT}
        \omega:
    \begin{cases}
        u\boxminus^i v
        \quad \mapsto \quad
        u\odot^i v
        & \text{ if }
        u\boxminus^i v \in P_r,\\
        u \quad \mapsto \quad
        u & \text{ if } u \in K_r.
    \end{cases}
    \end{equation}

    From now on, we suppose that the markers $M$ are in the direction
    $\be_i=\be_1$, the argument being symmetric for $i=2$.
    First we show that $\omega$ is well-defined.
    Let $x\in\Omega_\S$ be a Wang tiling.
    We show that $\omega(x)$ is a valid Wang tiling.
    It is sufficient to prove it for all dominoes $p\odot^jq$ that appear in
    $\omega(x)$ with $p,q\in\T$ and $j\in\{1,2\}$.
    Any domino $p\odot^jq$ that appears in $\omega(x)$ appears in the
    image under $\omega$ of a letter or
    of a domino in $x$.
    The first case can happen only if $j=1$ and
    there exists $u\boxminus^1v\in P_r$ such that
    $p\odot^1q=u\odot^1v=\omega(u\boxminus^1v)$, in which case
    the right color of $p$ is equal to the left color of $q$ since
    $r(p)=r(u)=\ell(v)=\ell(q)$.
    In the second case, $p$ and $q$ belong to the image of distinct letters.
    The case $j=1$ (horizontal dominoes) is easy since
    $\omega$ preserve the right and left colors of 
    tiles, i.e., $r(\omega(s))=r(s)$
    and $\ell(\omega(s))=\ell(s)$ for all $s\in\S$
    so that $r(p)=\ell(q)$.
    Consider now $j=2$ (vertical dominoes).
    Any vertical domino $p\odot^2q$ that appears in $\omega(x)$ appears in the
    image under $\omega$ of a vertical domino $p'\odot^2q'$ in $x$.
    There are two cases to consider.
    If $p',q'\in K_r$, then
    $t(p)=t(\omega(p'))=t(p')=b(q')=b(\omega(q'))=b(q')$.
    If $p',q'\in P_r$, 
    then there exists $a,b,c,d\in\T$ such that
    $p\odot^2q$ appears in $(a\odot^1b)\odot^2(c\odot^1d)=\omega(p'\odot^2q')$.
    Then 
    \[
        (t(a),t(b)) = 
        t(a\odot^1b) = 
        t(\omega(p'))=t(p')=b(q')=b(\omega(q'))
        =b(c\odot^1d)
        =(b(c), b(d)).
    \]
    Then $t(a)=b(c)$ and $t(b)=b(d)$
    from which we conclude that $t(p)=b(q)$.
    Therefore $\omega(x)$ is a valid Wang tiling of the plane and belongs to
    $\Omega_\T$.

    Now we show that $\omega$ is surjective up to a shift.
    Let $y\in\Omega_\T$ be a Wang tiling which can be seen as a function
    $y:\Z^2\to\T$.
    Consider the set $y^{-1}(M)\subset\Z^2$ of positions of marker tiles
    in~$y$. From the definition of markers in the direction $\be_i=\be_1$, 
    markers appear in nonadjacent columns in a tiling.
    Formally, there exists
    a set $H\subset\Z$ such that $y^{-1}(M)=H\times\Z$ and
    $1\notin H-H$.
    Since $1\notin H-H$, there exists a strictly increasing sequence
    $(a_k)_{k\in\Z}$ such that $\Z\setminus H=\{a_k\mid k\in\Z\}$.
    We assume that $a_0=0$ if $0\in\Z\setminus H$ and $a_0=-1$ if $0\in H$
    which makes the sequence $(a_k)_{k\in\Z}$ uniquely defined.
    Let
\begin{equation*}
\begin{array}{rccl}
    x:&\Z^2 & \to & \S\\
    &(k,\ell) & \mapsto &
\begin{cases}
    y(a_k,\ell)                          & \text{ if } a_k+1\notin H,\\
    y(a_k,\ell)\boxminus^1 y(a_k+1,\ell) & \text{ if } a_k+1\in H.
\end{cases}
\end{array}
\end{equation*}
    The function $x$ is well-defined since $x(k,\ell)\in\S$ for all
    $(k,\ell)\in\Z^2$.
    Indeed, if $a_k+1\notin H$, then $x(k,\ell)= y(a_k,\ell)\in\T\setminus M$.
    Also $y(a_k+1,\ell)\in\T\setminus M$.
    Since $y$ is a valid Wang tiling $y(a_k,\ell)\odot^1 y(a_k+1,\ell)$ admits
    arbitrarily large surrounding in $\Omega_\T$.
    Therefore, $x(k,\ell)=y(a_k,\ell)\in K_\infty\subseteq K_r\subset\S$.
    Suppose now that $a_k+1\in H$.
    Then $x(k,\ell)=y(a_k,\ell)\boxminus^1 y(a_k+1,\ell)\in(\T\setminus
    M)\boxminus^1 M$.
    Since $y$ is a valid Wang tiling $x(k,\ell)\in P_\infty\subseteq
    P_r\subset\S$.
    Moreover, we deduce that $x\in\Omega_\S$, that is it is a valid Wang tiling
    of the plane, from the fact that $y\in\Omega_\T$ is a valid Wang tiling of
    the plane.

    We may now finish the proof of surjectivity.
If $0\notin H$, 
then the tiling $y$ is exactly the image under $\omega$ of the tiling $x$ that
we constructed: $y=\omega(x)$.
If $0\in H$, then 
the tiling $y$ is a shift of the image under $\omega$ of the tiling $x$:
$y=\sigma^{\be_1}\omega(x)$.
Thus $\omega$ is surjective up to a shift.

The function $\omega$ is of the form
\begin{equation*}
    \omega(\S)
    \subseteq (\T\setminus M)
    \cup \left((\T\setminus M)\odot^1 M\right)
\end{equation*}
and its restriction on $\S$ is injective by construction.
    Therefore, we conclude from Lemma~\ref{lem:recognizable} that
    $\omega$ is recognizable in $\Omega_\S$.
\end{proof}

In Equation~\eqref{eq:omegaStoT}, given two Wang tiles $u$ and $v$ such that
$u\boxminus^i v$ is well-defined for $i\in\{1,2\}$, we consider a
$2$-dimensional morphism of the form
\begin{equation*}
u\boxminus^i v
\quad
\mapsto
\quad
u\odot^i v
\end{equation*}
which can be seen as
\begin{center}
\begin{tikzpicture}
[scale=0.900000000000000]
\draw (0, 0) -- (1, 0);
\draw (0, 0) -- (0, 1);
\draw (1, 1) -- (1, 0);
\draw (1, 1) -- (0, 1);
\node[rotate=0,font=\footnotesize] at (0.8, 0.5) {Z};
\node[rotate=0,font=\footnotesize] at (0.5, 0.8) {BD};
\node[rotate=0,font=\footnotesize] at (0.2, 0.5) {X};
\node[rotate=0,font=\footnotesize] at (0.5, 0.2) {AC};
\node at (1.5,.5) {$\mapsto$};
\node at (3.0,0.5) {\begin{tikzpicture}[scale=0.900000000000000]
\draw (0, 0) -- (1, 0);
\draw (0, 0) -- (0, 1);
\draw (1, 1) -- (1, 0);
\draw (1, 1) -- (0, 1);
\node[rotate=0,font=\footnotesize] at (0.8, 0.5) {Y};
\node[rotate=0,font=\footnotesize] at (0.5, 0.8) {B};
\node[rotate=0,font=\footnotesize] at (0.2, 0.5) {X};
\node[rotate=0,font=\footnotesize] at (0.5, 0.2) {A};
\draw (1, 0) -- (2, 0);
\draw (1, 0) -- (1, 1);
\draw (2, 1) -- (2, 0);
\draw (2, 1) -- (1, 1);
\node[rotate=0,font=\footnotesize] at (1.8, 0.5) {Z};
\node[rotate=0,font=\footnotesize] at (1.5, 0.8) {D};
\node[rotate=0,font=\footnotesize] at (1.2, 0.5) {Y};
\node[rotate=0,font=\footnotesize] at (1.5, 0.2) {C};
\end{tikzpicture}};
\end{tikzpicture}
\qquad
    \raisebox{5mm}{\text{ or }}
\qquad
\begin{tikzpicture}
[scale=0.900000000000000]
\draw (0, 0) -- (1, 0);
\draw (0, 0) -- (0, 1);
\draw (1, 1) -- (1, 0);
\draw (1, 1) -- (0, 1);
\node[rotate=90,font=\footnotesize] at (0.8, 0.5) {YZ};
\node[rotate=0,font=\footnotesize] at (0.5, 0.8) {D};
\node[rotate=90,font=\footnotesize] at (0.2, 0.5) {XW};
\node[rotate=0,font=\footnotesize] at (0.5, 0.2) {A};
\node at (1.5,.5) {$\mapsto$};
\node at (2.5,0.5) {\begin{tikzpicture}[scale=0.900000000000000]
\draw (0, 0) -- (1, 0);
\draw (0, 0) -- (0, 1);
\draw (1, 1) -- (1, 0);
\draw (1, 1) -- (0, 1);
\node[rotate=0,font=\footnotesize] at (0.8, 0.5) {Y};
\node[rotate=0,font=\footnotesize] at (0.5, 0.8) {B};
\node[rotate=0,font=\footnotesize] at (0.2, 0.5) {X};
\node[rotate=0,font=\footnotesize] at (0.5, 0.2) {A};
\draw (0, 1) -- (1, 1);
\draw (0, 1) -- (0, 2);
\draw (1, 2) -- (1, 1);
\draw (1, 2) -- (0, 2);
\node[rotate=0,font=\footnotesize] at (0.8, 1.5) {Z};
\node[rotate=0,font=\footnotesize] at (0.5, 1.8) {D};
\node[rotate=0,font=\footnotesize] at (0.2, 1.5) {W};
\node[rotate=0,font=\footnotesize] at (0.5, 1.2) {B};
\end{tikzpicture}};
\end{tikzpicture}
\end{center}
whether $i=1$ or $i=2$.

\begin{remark}
In Equation~\eqref{eq:omegaStoT}, the set $P_r$ 
can be computed with the help of transducers since
the fusion operation $\boxminus^i$ on Wang tiles can be seen as the product of
transducers (see Equation~\eqref{eq:boxminus_with_transducer}).
The algorithmic aspects of Theorem~\ref{thm:exist-homeo} 
will be considered in a forthcoming work \cite{labbe_structure_2018}.
\end{remark}

\begin{remark}
    The existence of a $2$-dimensional morphism $\omega:\Omega_\S\to\Omega_\T$
    which is recognizable in the Wang shift $\Omega_\S$ and
    surjective up to a shift implies the existence of a
    \emph{homeomorphism} between $\Omega_\S$ and $\Omega_\T$ where each tile
    have specific real width and height and tilings are defined in $\R^2$
    rather than in $\Z^2$.
    We have chosen in this contribution the point of view of tilings of $\Z^2$
    which makes some aspects look easier and other more intricate.
\end{remark}

%

\section{Desubstitution of $\Omega_\U$}\label{sec:desubstitionofU}

In this section, we prove that tilings in $\Omega_\U$ can be desubstituted
uniquely into tilings of $\Z^2$ over another set $\V$ of Wang tiles. The proof
follows from the computation of the set of vertical dominoes appearing in
$\Omega_\U$ (Lemma~\ref{lem:dominoes-direction2-U}) whose proof is done with
Sage.

The tile set $\U$ seen in Figure~\ref{fig:the-tile-set-U} is defined over an
alphabet 
$\{A, B, C, D, E, F, G, H, I, J\}$
of 10 vertical colors
and an alphabet
$\{K, L, M, N, O, P\}$
of 6 horizontal colors.
We identify each tile $u_i$ in $\U$ with an index $i\in\{0,\dots,18\}$ in the
following way and we draw this index in the center of each tile to ease their
identification.
\begin{equation*}
\begin{array}{l}
    \U= \left\{ 
u_0=\hspace{-2mm}\raisebox{-3mm}{\UTileO},
u_1=\hspace{-2mm}\raisebox{-3mm}{\UTileI},
u_2=\hspace{-2mm}\raisebox{-3mm}{\UTileII},
u_3=\hspace{-2mm}\raisebox{-3mm}{\UTileIII},
u_4=\hspace{-2mm}\raisebox{-3mm}{\UTileIV},
u_5=\hspace{-2mm}\raisebox{-3mm}{\UTileV},
\right.\\
\hspace{0mm}\left.
u_6=\hspace{-2mm}\raisebox{-3mm}{\UTileVI},
u_7=\hspace{-2mm}\raisebox{-3mm}{\UTileVII},
u_8=\hspace{-2mm}\raisebox{-3mm}{\UTileVIII},
u_9=\hspace{-2mm}\raisebox{-3mm}{\UTileIX},
u_{10}=\hspace{-2mm}\raisebox{-3mm}{\UTileX},
u_{11}=\hspace{-2mm}\raisebox{-3mm}{\UTileXI},
u_{12}=\hspace{-2mm}\raisebox{-3mm}{\UTileXII},
\right.\\
\hspace{5mm}\left.
u_{13}=\hspace{-2mm}\raisebox{-3mm}{\UTileXIII},
u_{14}=\hspace{-2mm}\raisebox{-3mm}{\UTileXIV},
u_{15}=\hspace{-2mm}\raisebox{-3mm}{\UTileXV},
u_{16}=\hspace{-2mm}\raisebox{-3mm}{\UTileXVI},
u_{17}=\hspace{-2mm}\raisebox{-3mm}{\UTileXVII},
u_{18}=\hspace{-2mm}\raisebox{-3mm}{\UTileXVIII}
\right\}.
\end{array}
\end{equation*}
The transducer representation of the tile set $\U$ 
was shown in Figure~\ref{fig:T7transducer}.

\begin{lemma}\label{lem:dominoes-direction2-U}
    The set of dominoes in direction $\be_2$ allowing a surrounding of radius 2
    in $\Omega_\U$ is
    \begin{align*}
        D_{\U,\be_2,2} &= \{
\left(u_{0}, u_{8}\right)
,
\left(u_{1}, u_{8}\right)
,
\left(u_{1}, u_{9}\right)
,
\left(u_{1}, u_{11}\right)
,
\left(u_{2}, u_{16}\right)
,
\left(u_{3}, u_{16}\right)
,
\left(u_{4}, u_{13}\right)
,
\left(u_{5}, u_{13}\right)
,\\&\hspace{10mm}
\left(u_{6}, u_{14}\right)
,
\left(u_{6}, u_{17}\right)
,
\left(u_{7}, u_{15}\right)
,
\left(u_{8}, u_{0}\right)
,
\left(u_{8}, u_{9}\right)
,
\left(u_{8}, u_{11}\right)
,
\left(u_{9}, u_{1}\right)
,
\left(u_{9}, u_{10}\right)
,\\&\hspace{10mm}
\left(u_{10}, u_{1}\right)
,
\left(u_{11}, u_{1}\right)
,
\left(u_{11}, u_{10}\right)
,
\left(u_{12}, u_{6}\right)
,
\left(u_{13}, u_{4}\right)
,
\left(u_{13}, u_{7}\right)
,
\left(u_{13}, u_{18}\right)
,
\left(u_{14}, u_{2}\right)
,\\&\hspace{10mm}
\left(u_{14}, u_{6}\right)
,
\left(u_{14}, u_{12}\right)
,
\left(u_{15}, u_{7}\right)
,
\left(u_{15}, u_{13}\right)
,
\left(u_{15}, u_{18}\right)
,
\left(u_{16}, u_{3}\right)
,
\left(u_{16}, u_{14}\right)
,
\left(u_{16}, u_{17}\right)
,\\&\hspace{10mm}
\left(u_{17}, u_{3}\right)
,
\left(u_{17}, u_{14}\right)
,
\left(u_{18}, u_{5}\right)
\}.
    \end{align*}
\end{lemma}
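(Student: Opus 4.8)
\emph{Proof idea.} This is a finite assertion about the Wang shift $\Omega_\U$, and the plan is to establish it by an exhaustive finite computation; it is one of the few statements whose verification is carried out with Sage and deferred to Section~\ref{sec:appendix}. Recall from the proof of Theorem~\ref{thm:exist-homeo} that $D_{\U,\be_2,2}$ is the set of dominoes in the direction $\be_2$ that admit a surrounding of radius $2$ in $\Omega_\U$. A domino in the direction $\be_2$ has shape $(1,2)$, so such a surrounding is a pattern of shape $(1,2)+2\cdot(1,1)=(5,6)$; hence, by definition, $D_{\U,\be_2,2}$ consists of the central vertical dominoes --- the tiles at positions $(2,2)$ and $(2,3)$ --- of those $5\times 6$ patterns that lie in the language $\L_{\Omega_\U}$.

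First I would enumerate the valid $5\times 6$ patches over $\U$. A horizontal row of $5$ tiles is a path of length $5$ in the graph on the $10$ vertical colors whose edges are the tiles of $\U$, equivalently a length-$5$ computation of the transducer $M_\U$ of Figure~\ref{fig:T7transducer}; there are finitely many. A $5\times 6$ patch is then a stack of $6$ such rows in which the top horizontal-color word of each row equals the bottom horizontal-color word of the next, a finite constraint-satisfaction problem that one builds incrementally (rows, then compatible stacked pairs, and so on) to keep it feasible. Reading off the tiles at positions $(2,2)$ and $(2,3)$ over all such patches yields a finite set $E$ of vertical dominoes, and one checks by direct comparison that $E$ is exactly the list of $35$ dominoes displayed in the statement. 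Since $w\in\L_{\Omega_\U}$ forces $w$ to be a valid patch, this already gives $D_{\U,\be_2,2}\subseteq E$.

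For the reverse inclusion $E\subseteq D_{\U,\be_2,2}$ one must check that each domino in $E$, together with a suitable radius-$2$ surrounding, occurs in a tiling of the whole plane and not merely in a finite patch. The plan is a compactness argument: it suffices to exhibit, for each of the $35$ dominoes, validly tiled rectangles of unbounded size containing it in the prescribed position, after which a standard K\"onig's-lemma argument furnishes an element of $\Omega_\U$ realising the surrounding. Concretely one confirms with Sage that the sequence $D_{\U,\be_2,r}$, which is nonincreasing in $r$, has already stabilised at $r=2$ (so that the $5\times 6$ window is wide enough to detect every global constraint relevant to vertical dominoes); this is consistent with the supertile structure of Figure~\ref{fig:Usupertiles}, which eventually forbids any further obstruction.

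The main obstacle is precisely this local-to-global passage: membership in $\L_{\Omega_\U}$ is not a priori a purely local condition, so the genuine content of the lemma is the verification that radius $2$ already suffices together with the production of arbitrarily large valid surroundings for the $35$ surviving dominoes. The row-by-row and stack-by-stack enumeration itself is entirely routine, and it is this enumeration --- and the accompanying stabilisation check --- that is delegated to the Sage computation in Section~\ref{sec:appendix}.
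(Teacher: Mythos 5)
Your first half is exactly the paper's proof: the lemma is established by a single finite computation that enumerates the locally valid $5\times 6$ patches over $\U$ and reads off the central vertical dominoes (the Sage call \texttt{U.dominoes\_with\_surrounding(i=2,radius=2)} in Section~\ref{sec:appendix}). The problem is your second half. Although the definition of ``surrounding in a subshift'' literally asks for $w\in\L_{\Omega_\U}$, that reading cannot be what $D_{\U,\be_2,r}$ means: it would make the set independent of $r$, whereas the paper computes the counts $37,35,35$ for $r=1,2,3$; the proof of Lemma~\ref{lem:50tiles-2x2-in-OmegaU} glosses ``can not be surrounded'' as ``there is no valid tiling of a $3\times3$ rectangle with them in the middle''; and the only downstream use (Lemma~\ref{lem:markersforU}) invokes $D_{\U,\be_2,2}$ precisely as an over-approximation, $\{(u,v):u\odot^2v\in\L(\Omega_\U)\}\subseteq D_{\U,\be_2,2}$. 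So $D_{\U,\be_2,2}$ is by construction the set of dominoes admitting a \emph{locally} valid $5\times6$ surrounding by tiles of $\U$, the lemma is the finite enumeration and nothing more, and your entire reverse-inclusion/compactness step addresses a claim the lemma does not make and the paper never uses.

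Even if one insisted on your reading, the argument you sketch for $E\subseteq D_{\U,\be_2,2}$ would not go through. The sequence $r\mapsto D_{\U,\be_2,r}$ is nonincreasing, but agreeing at $r=2$ and $r=3$ does not mean it has stabilised for all $r$: a domino could admit a radius-$3$ surrounding and none of radius $10$. Without stabilisation for every $r$ you do not have arbitrarily large valid surroundings, so K\"onig's lemma has nothing to feed on. Proving that each of the $35$ dominoes genuinely occurs in a tiling of the plane in fact requires the self-similar structure of $\Omega_\U$, which is only established later (it follows a posteriori from $\Omega_\U=\X_\omega$ in Proposition~\ref{prop:OmegaU-is-self-similar}); building it into this lemma would make the development circular.
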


The proof of Lemma~\ref{lem:dominoes-direction2-U} is done with Sage
in Section~\ref{sec:appendix}.

\begin{lemma}\label{lem:markersforU}
    The set
    $R = \{u_0, u_1, u_2, u_3, u_4, u_5, u_6, u_7\}$
    is a set of markers in the direction $\be_2$ for~$\U$.
\end{lemma}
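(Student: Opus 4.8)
The plan is to verify directly the three forbidden-pattern conditions of Definition~\ref{def:markers} for the subset $R=\{u_0,\dots,u_7\}$ in the direction $\be_2$, using the data already computed in Lemma~\ref{lem:dominoes-direction2-U}. Recall that $M\subset\T$ is a set of markers in the direction $\be_2$ precisely when $M\odot^2 M$, $M\odot^1(\T\setminus M)$ and $(\T\setminus M)\odot^1 M$ are all forbidden in $\Omega_\T$. Here $\T=\U$ and $M=R$, so $\U\setminus R=\{u_8,u_9,\dots,u_{18}\}$. The three conditions amount to: (a) no vertical domino in $\Omega_\U$ has both tiles in $R$; (b) no horizontal domino has its left tile in $R$ and its right tile in $\U\setminus R$; (c) no horizontal domino has its left tile in $\U\setminus R$ and its right tile in $R$.

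For condition~(a), I would observe that any vertical domino occurring in a tiling of $\Omega_\U$ occurs in some tiling, hence admits a surrounding of any radius, in particular radius~$2$; therefore it must appear in the list $D_{\U,\be_2,2}$ of Lemma~\ref{lem:dominoes-direction2-U}. Inspecting that list, every pair $(u_a,u_b)$ has at least one of $a,b$ in $\{8,9,\dots,18\}$ — indeed, scanning the 36 listed dominoes, whenever the bottom index $a$ lies in $\{0,\dots,7\}$ the top index $b$ lies in $\{8,\dots,18\}$ (e.g.\ $(u_0,u_8)$, $(u_1,u_9)$, $(u_4,u_{13})$, $(u_7,u_{15})$, etc.), and symmetrically when the top index is small the bottom index is large. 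Hence $R\odot^2 R$ is forbidden in $\Omega_\U$. This is the heart of the argument and the only place where the computed data is essential; the rest is a small finite check on the tile set $\U$ itself.

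For conditions~(b) and~(c), I would compute the set of horizontal dominoes occurring in $\Omega_\U$ — or, more cheaply, just check the matching condition on horizontal colors directly from Figure~\ref{fig:the-tile-set-U}. A horizontal domino $u_a\odot^1 u_b$ can occur only if $\scright(u_a)=\scleft(u_b)$ (this is necessary; occurring in $\Omega_\U$ is stronger, so forbidding it for all color-compatible pairs suffices). Reading off the vertical-edge colors: the right colors of $u_0,\dots,u_7$ are $F,F,J,D,H,H,H,H$ and their left colors are $J,H,F,F,J,H,H,D$; the right colors of $u_8,\dots,u_{18}$ are $B,G,G,A,E,E,I,I,I,I,C$ and their left colors are $I,E,C,I,G,I,G,I,B,A,I$. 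For~(b) one checks that no right color among $\{F,J,D,H\}$ occurs as a left color among $\{I,E,C,G,B,A\}$ of a tile in $\U\setminus R$ — which holds since these two color sets are disjoint. For~(c) one checks symmetrically that no right color among $\{B,G,A,E,I,C\}$ of $\U\setminus R$ occurs as a left color among $\{J,H,F,D\}$ of a tile in $R$ — again the relevant color sets are disjoint. Hence both $R\odot^1(\U\setminus R)$ and $(\U\setminus R)\odot^1 R$ are forbidden in $\Omega_\U$, completing the verification that $R$ is a set of markers in the direction $\be_2$.

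I do not anticipate a genuine obstacle here: the lemma is essentially a bookkeeping consequence of Lemma~\ref{lem:dominoes-direction2-U} together with an inspection of the vertical colors of $\U$. The only mild subtlety is making sure the color sets on the two sides of the $R$ / $\U\setminus R$ partition are indeed disjoint for the horizontal conditions, and that every vertical domino with both endpoints in $R$ is genuinely absent from the radius-$2$ list; both are finite checks that can be carried out (and, if desired, confirmed by Sage alongside Lemma~\ref{lem:dominoes-direction2-U}).
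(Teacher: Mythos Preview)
Your proof is correct and follows essentially the same approach as the paper: condition~(a) is checked against the list $D_{\U,\be_2,2}$ from Lemma~\ref{lem:dominoes-direction2-U}, and conditions~(b) and~(c) follow from the observation that the vertical color sets $\{D,F,H,J\}$ (for $R$) and $\{A,B,C,E,G,I\}$ (for $\U\setminus R$) are disjoint. Two tiny slips that don't affect the argument: the list $D_{\U,\be_2,2}$ has $35$ pairs, not $36$, and the left color of $u_6$ is $F$, not $H$ (the resulting color \emph{set} for $R$ is unchanged).
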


\begin{proof}
    First we show that $R\odot^2 R$ is forbidden in $\Omega_\U$.
    The language of dominoes in direction $\be_2$ in $\Omega_\U$ is a subset
    of the set of dominoes in direction $\be_2$ allowing a surrounding of
    radius~2 in $\Omega_\U$:
    \[
        \left\{(u,v)\in\U^2\mid u\odot^2v\in\L(\Omega_\U)\right\}
         \subseteq D_{\U,\be_2,2}
    \]
    computed in Lemma~\ref{lem:dominoes-direction2-U}.
    Since $D_{\U,\be_2,2}\cap R^2=\varnothing$, we deduce that
    $(R\odot^2 R)\cap\L(\Omega_\U)=\varnothing$, that is
    $R\odot^2 R$ is forbidden in $\Omega_\U$.

    Finally, we remark that
    \begin{align*}
        \scleft(\U\setminus R)&=\scright(\U\setminus R) = \{A,B,C,E,G,I\},\\
        \scleft(R)&=\scright(R) = \{D,F,H,J\},
    \end{align*}
    which are disjoint.
    Thus
    $R\odot^1 (\U\setminus R)$ and
    $(\U\setminus R)\odot^1 R$ 
    are forbidden in $\Omega_\U$.
    We conclude that $R$
    is a set of markers in the direction $\be_2$ for $\U$.
\end{proof}

\begin{figure}[h]
\begin{center}
    \input{article1_alpha.tex}
\end{center}
    \caption{The morphism $\alpha:\Omega_\V\to\Omega_\U$.}
    \label{fig:alpha}
\end{figure}

\begin{proposition}\label{prop:wecandesubstituteU}
    There exists a tile set $\V$ of cardinality 21 and
    a $d$-dimensional morphism $\alpha:\Omega_\V\to\Omega_\U$
    that is recognizable in $\Omega_\V$
    and $\alpha(\Omega_\V)\cup\sigma^{\be_2}\alpha(\Omega_\V)=\Omega_\U$.
\end{proposition}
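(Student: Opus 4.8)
The plan is to obtain $\alpha$ as a direct application of Theorem~\ref{thm:exist-homeo} to $\T=\U$, with the set of markers $M=R=\{u_0,\dots,u_7\}$ in the direction $\be_2$ provided by Lemma~\ref{lem:markersforU}. The construction in the proof of that theorem depends on a choice of radius $r\in\N$ and produces a Wang tile set $\S=K_r\cup P_r$ together with the $2$-dimensional morphism $\omega$ of \eqref{eq:omegaStoT}; I take $r=2$. This is legitimate: $R$ is a set of markers for $\U$ in the direction $\be_2$ by Lemma~\ref{lem:markersforU}, and $D_{\U,\be_2,2}\cap R^2=\varnothing$, a fact already recorded in the proof of Lemma~\ref{lem:markersforU}, which is exactly what the construction needs. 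The morphism $\omega$ then has the required shape $\omega(\S)\subseteq(\U\setminus R)\cup\left((\U\setminus R)\odot^2 R\right)$, is injective on $\S$ by construction, and is recognizable in $\Omega_\S$ and surjective up to a shift in the direction $\be_2$ by the theorem.

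Next I would read off $\S=K_2\cup P_2$ from the explicit list $D_{\U,\be_2,2}$ computed in Lemma~\ref{lem:dominoes-direction2-U}. Here $\U\setminus R=\{u_8,\dots,u_{18}\}$. The set $P_2$ is in bijection with the vertical dominoes $u\odot^2 v\in D_{\U,\be_2,2}$ having $u\in\U\setminus R$ and $v\in R$; scanning the list these are the thirteen pairs $(u_8,u_0)$, $(u_9,u_1)$, $(u_{10},u_1)$, $(u_{11},u_1)$, $(u_{12},u_6)$, $(u_{13},u_4)$, $(u_{13},u_7)$, $(u_{14},u_2)$, $(u_{14},u_6)$, $(u_{15},u_7)$, $(u_{16},u_3)$, $(u_{17},u_3)$, $(u_{18},u_5)$, so $|P_2|=13$. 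The set $K_2$ consists of those $u\in\U\setminus R$ that admit some $v\in\U\setminus R$ with $u\odot^2 v\in D_{\U,\be_2,2}$; from the same list these are $u_8,u_9,u_{11},u_{13},u_{14},u_{15},u_{16},u_{17}$, while $u_{10},u_{12},u_{18}$ only ever sit immediately below a tile of $R$, so $|K_2|=8$. Therefore $|\S|=|K_2|+|P_2|=8+13=21$.

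Finally I set $\V=\S$ and $\alpha=\omega$. Theorem~\ref{thm:exist-homeo} then yields at once that $\alpha:\Omega_\V\to\Omega_\U$ is a $2$-dimensional morphism with $\alpha(\V)\subseteq(\U\setminus R)\cup\left((\U\setminus R)\odot^2 R\right)$, that $\alpha$ is recognizable in $\Omega_\V$, and that $\alpha(\Omega_\V)\cup\sigma^{\be_2}\alpha(\Omega_\V)=\Omega_\U$; the tile set $\V$ and the morphism $\alpha$ are displayed in Figure~\ref{fig:alpha}, the eight tiles of $K_2$ being sent to single tiles and the thirteen tiles of $P_2$ to vertical dominoes. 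There is no genuine obstacle in this proposition: the substantive content is already packaged into Theorem~\ref{thm:exist-homeo} and into the Sage computation of $D_{\U,\be_2,2}$ (Lemma~\ref{lem:dominoes-direction2-U}), and what remains is the finite bookkeeping above. The only point that warrants care is the cardinality claim, where one must verify that no vertical domino of $D_{\U,\be_2,2}$ with bottom in $\U\setminus R$ was double-counted and that the three tiles $u_{10},u_{12},u_{18}$ indeed fail to belong to $K_2$, so that $|\V|=21$ exactly.
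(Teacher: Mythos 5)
Your proof is correct and follows essentially the same route as the paper: apply Theorem~\ref{thm:exist-homeo} with the marker set $R=\{u_0,\dots,u_7\}$ from Lemma~\ref{lem:markersforU} and radius $r=2$, then read off $P_2$ (13 dominoes) and $K_2$ (8 tiles, excluding $u_{10},u_{12},u_{18}$) from the list in Lemma~\ref{lem:dominoes-direction2-U} to get $|\V|=21$. Your bookkeeping of the two sets matches the paper's exactly.
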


\input{article1_macro_tiles_V.tex}

\begin{proof}
    From Lemma~\ref{lem:markersforU},
    $R = \{u_0, u_1, u_2, u_3, u_4, u_5, u_6, u_7\}$
    is a set of markers in the direction $\be_2$ for $\U$.
    Therefore, Theorem~\ref{thm:exist-homeo} applies.
    Let $P_2$ be the set
    \begin{align*}
        P_2 &= \left\{u\boxminus^2 v \,\middle|\, 
            u\in\U\setminus R, v\in R 
            \text{ and } 
            (u,v) \in D_{\U,\be_2,2}\right\}\\
           &= \left\{
\begin{array}{|c|}\hline u_{0} \\ u_{8} \\ \hline \end{array}
\,,\,
\begin{array}{|c|}\hline u_{1} \\ u_{9} \\ \hline \end{array}
\,,\,
\begin{array}{|c|}\hline u_{1} \\ u_{10} \\ \hline \end{array}
\,,\,
\begin{array}{|c|}\hline u_{1} \\ u_{11} \\ \hline \end{array}
\,,\,
\begin{array}{|c|}\hline u_{6} \\ u_{12} \\ \hline \end{array}
\,,\,
\begin{array}{|c|}\hline u_{4} \\ u_{13} \\ \hline \end{array}
\,,\,
\begin{array}{|c|}\hline u_{7} \\ u_{13} \\ \hline \end{array}
\,,\,
\begin{array}{|c|}\hline u_{2} \\ u_{14} \\ \hline \end{array}
\,,\,
\begin{array}{|c|}\hline u_{6} \\ u_{14} \\ \hline \end{array}
\,,\,
\begin{array}{|c|}\hline u_{7} \\ u_{15} \\ \hline \end{array}
\,,\,
\begin{array}{|c|}\hline u_{3} \\ u_{16} \\ \hline \end{array}
\,,\,
\begin{array}{|c|}\hline u_{3} \\ u_{17} \\ \hline \end{array}
\,,\,
\begin{array}{|c|}\hline u_{5} \\ u_{18} \\ \hline \end{array}
\right\}
\end{align*}
    according to Lemma~\ref{lem:dominoes-direction2-U}.
    Let $K_2$ be the set
    \begin{align*}
        K_2 &= \left\{u\in\U\setminus R\,\middle|\, \text{ there exists }
        v\in\U\setminus
            R \text{ such that } (u,v) \in D_{\U,\be_2,2}\right\}\\
          &= \{u_8, u_9, u_{11}, u_{13}, u_{14}, u_{15}, u_{16}, u_{17}\}.
    \end{align*}
    Let $\Omega_\V$ be the Wang shift of the Wang tile set $\V=K_2\cup
    P_2$ of cardinality~21:
\begin{equation*}\label{eq:tilesV}
\begin{array}{c}
    \V= \left\{ 
v_0=\hspace{-2mm}\raisebox{-3mm}{\VTileO},
v_1=\hspace{-2mm}\raisebox{-3mm}{\VTileI},
v_2=\hspace{-2mm}\raisebox{-3mm}{\VTileII},
v_3=\hspace{-2mm}\raisebox{-3mm}{\VTileIII},
v_4=\hspace{-2mm}\raisebox{-3mm}{\VTileIV},
v_5=\hspace{-2mm}\raisebox{-3mm}{\VTileV},
v_6=\hspace{-2mm}\raisebox{-3mm}{\VTileVI},
\right.\\
\hspace{1cm}\left.
v_7=\hspace{-2mm}\raisebox{-3mm}{\VTileVII},
v_8=\hspace{-2mm}\raisebox{-3mm}{\VTileVIII},
v_9=\hspace{-2mm}\raisebox{-3mm}{\VTileIX},
v_{10}=\hspace{-2mm}\raisebox{-3mm}{\VTileX},
v_{11}=\hspace{-2mm}\raisebox{-3mm}{\VTileXI},
v_{12}=\hspace{-2mm}\raisebox{-3mm}{\VTileXII},
v_{13}=\hspace{-2mm}\raisebox{-3mm}{\VTileXIII},
\right.\\
\hspace{1cm}\left.
v_{14}=\hspace{-2mm}\raisebox{-3mm}{\VTileXIV},
v_{15}=\hspace{-2mm}\raisebox{-3mm}{\VTileXV},
v_{16}=\hspace{-2mm}\raisebox{-3mm}{\VTileXVI},
v_{17}=\hspace{-2mm}\raisebox{-3mm}{\VTileXVII},
v_{18}=\hspace{-2mm}\raisebox{-3mm}{\VTileXVIII},
v_{19}=\hspace{-2mm}\raisebox{-3mm}{\VTileXIX},
v_{20}=\hspace{-2mm}\raisebox{-3mm}{\VTileXX}
\right\}.
\end{array}
\end{equation*}
    Let $\alpha:\Omega_\V\to\Omega_\U$ be the $2$-dimensional morphism
    defined by
    \begin{equation*}
        \alpha:
    \begin{cases}
        u\boxminus^2 v
        \quad \mapsto \quad
        u\odot^2 v
        & \text{ if }
        u\boxminus^2 v \in P_2\\
        k \quad \mapsto \quad
        k & \text{ if } k \in K_2.
    \end{cases}
    \end{equation*}
    Then $\alpha:\Omega_\V\to\Omega_\U$ is
    recognizable in $\Omega_\V$
    and $\alpha(\Omega_\V)\cup\sigma^{\be_2}\alpha(\Omega_\V)=\Omega_\U$.
\end{proof}

The morphism $\alpha$ in terms of the
$\U=\{u_i\}_{0\leq i\leq 18}$ and $\V=\{v_i\}_{0\leq i\leq 20}$ 
is in Figure~\ref{fig:alpha}.
The tile set $\V$ uses the
following alphabet of 13 vertical colors:
\begin{equation*}
\{A, B, E, G, I, AF, BF, CH, EH, GF, ID, IH, IJ\}
\end{equation*}
and the following alphabet of 5 horizontal colors:
\begin{equation*}
\{K, L, M, O, P\}.
\end{equation*}
Notice that the set of vertical left and right colors has increased in size
compared to $\U$ and that the set of horizontal bottom and top colors
has \emph{decreased} in size. Indeed, color N has disappeared.

\section{Desubstitution of $\Omega_\V$}\label{sec:desubstitionofV}

In this section, we prove that tilings in $\Omega_\V$ can be desubstituted
uniquely into tilings of $\Z^2$ over another set $\W$ of Wang tiles. The proof
follows from the computation of the set of horizontal dominoes appearing in
$\Omega_\V$ (Lemma~\ref{lem:dominoes-direction1-V}) whose proof is done with
Sage.

\begin{lemma}\label{lem:dominoes-direction1-V}
    The set of dominoes in direction $\be_1$ allowing a surrounding of radius 1
    in $\Omega_\V$ is
    \begin{align*}
        D_{\V,\be_1,1} &= \{
\left(v_{0}, v_{4}\right)
,
\left(v_{1}, v_{5}\right)
,
\left(v_{2}, v_{3}\right)
,
\left(v_{3}, v_{6}\right)
,
\left(v_{4}, v_{1}\right)
,
\left(v_{4}, v_{2}\right)
,
\left(v_{5}, v_{1}\right)
,
\left(v_{5}, v_{2}\right)
,\\&\hspace{10mm}
\left(v_{5}, v_{7}\right)
,
\left(v_{6}, v_{1}\right)
,
\left(v_{7}, v_{0}\right)
,
\left(v_{7}, v_{1}\right)
,
\left(v_{8}, v_{16}\right)
,
\left(v_{9}, v_{17}\right)
,
\left(v_{10}, v_{14}\right)
,
\left(v_{11}, v_{15}\right)
,\\&\hspace{10mm}
\left(v_{12}, v_{15}\right)
,
\left(v_{13}, v_{15}\right)
,
\left(v_{14}, v_{11}\right)
,
\left(v_{14}, v_{18}\right)
,
\left(v_{15}, v_{18}\right)
,
\left(v_{15}, v_{20}\right)
,
\left(v_{16}, v_{12}\right)
,
\left(v_{17}, v_{12}\right)
,\\&\hspace{10mm}
\left(v_{17}, v_{19}\right)
,
\left(v_{18}, v_{8}\right)
,
\left(v_{18}, v_{10}\right)
,
\left(v_{19}, v_{8}\right)
,
\left(v_{20}, v_{9}\right)
,
\left(v_{20}, v_{13}\right)
\}.
    \end{align*}
\end{lemma}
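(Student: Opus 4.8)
The plan is to reduce the statement to a finite computation and run it with Sage in Section~\ref{sec:appendix}, exactly as for Lemma~\ref{lem:dominoes-direction2-U}. Recall that $\Omega_\V$ is the Wang shift of the $21$-tile set $\V$ of Proposition~\ref{prop:wecandesubstituteU}, hence a SFT whose only forbidden patterns are the edge mismatches~\eqref{eq:1}--\eqref{eq:2}. Unravelling the definition, a pair $(v,v')\in\V^2$ lies in $D_{\V,\be_1,1}$ precisely when the domino $v\odot^1 v'$ occurs at position $(1,1)$ inside some $4\times 3$ pattern of $\L(\Omega_\V)$, that is, inside a $4\times 3$ block of an actual tiling of $\Z^2$ by $\V$.

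For each $k\geq 1$ let $E_k\subseteq\V^2$ be the set of dominoes occurring at the centre of an edge-compatible rectangular $\V$-pattern of shape $(2+2k,\,1+2k)$; each $E_k$ is finite and computable by plain enumeration. Every pattern of $\L(\Omega_\V)$ is edge-compatible and is the central window of an edge-compatible pattern of every radius, so $D_{\V,\be_1,1}\subseteq E_k$ for all $k$; the family $(E_k)_k$ is non-increasing, because a domino at the centre of an edge-compatible $(2+2(k+1))\times(1+2(k+1))$ pattern also lies at the centre of its inner $(2+2k)\times(1+2k)$ sub-pattern; and a compactness argument over the finite alphabet $\V$ gives $\bigcap_k E_k=D_{\V,\be_1,1}$ (a domino occurring at the centre of edge-compatible patterns of all radii occurs in a genuine tiling). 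So the computation amounts to evaluating $E_1,E_2,\dots$ until the sequence is seen to be constant; for $\V$ it is already constant at small radius, equal to the displayed set, which is then $D_{\V,\be_1,1}$.

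The main obstacle is to certify that this numerical stabilisation is genuine — equivalently, that the edge-compatible patterns counted by $E_k$ really do extend to full tilings rather than over-approximating $\L(\Omega_\V)$, a question that is undecidable for an arbitrary SFT. For $\V$ this is a finite verification, carried out with Sage in Section~\ref{sec:appendix}; it is also consistent a posteriori with the hierarchical description of $\Omega_\V$ coming from Proposition~\ref{prop:wecandesubstituteU} (the recognizable morphism $\alpha:\Omega_\V\to\Omega_\U$, onto up to a shift, together with the supertile decomposition of Figure~\ref{fig:Usupertiles}), which realises each of the $30$ listed dominoes with a radius-$1$ surrounding and shows no other pair admits one. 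The same remarks apply verbatim to Lemma~\ref{lem:dominoes-direction2-U}, with $\be_2$ and radius $2$ in place of $\be_1$ and radius $1$.
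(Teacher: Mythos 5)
Your reduction to a finite computation is the right instinct, but the way you close it has a gap, and the gap comes from reading $D_{\V,\be_1,1}$ as requiring the surrounding $4\times 3$ pattern to lie in $\L(\Omega_\V)$, i.e.\ to occur in an actual tiling of the plane. The set the paper actually computes (the call \texttt{V.dominoes\_with\_surrounding(i=1,radius=1)} in Section~\ref{sec:appendix}) and the only property of $D_{\V,\be_1,1}$ ever used downstream --- the inclusion $\{(u,v)\mid u\odot^1 v\in\L(\Omega_\V)\}\subseteq D_{\V,\be_1,1}$ in Lemma~\ref{lem:markersforV}, and the construction of $P_1,K_1$ in Proposition~\ref{prop:wecandesubstituteV}, which Theorem~\ref{thm:exist-homeo} permits for \emph{any} radius $r$ --- is your $E_1$: the dominoes extendable to an edge-compatible $4\times3$ pattern. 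That is a single finite enumeration, and it is the entire proof; the radius-$2$ computation reported in the appendix is a sanity check on stabilization, not a logical ingredient.

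Under your global reading the lemma becomes the assertion $\bigcap_k E_k=E_1$, and your justification of that step does not work. Observing $E_1=E_2$ says nothing about $E_3,E_4,\dots$; certifying that every radius-$1$-surroundable domino extends to a tiling of $\Z^2$ is exactly the completion problem you yourself flag as undecidable in general, and it is not ``a finite verification carried out with Sage.'' The fallback appeal to Proposition~\ref{prop:wecandesubstituteU} and the supertile decomposition is circular at this point of the paper: the substitutive description of $\Omega_\V$, and the fact that the $30$ listed dominoes genuinely occur in tilings (which ultimately rests on the self-similarity and minimality results of Section~\ref{sec:aperiodicityofU}), are derived \emph{from} this lemma rather than available before it. So either take the statement to be about the radius-$1$ locally admissible set --- in which case your computation of $E_1$ alone is the proof and the compactness scaffolding should be dropped --- or accept that the stronger global version cannot be established at this stage.
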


The proof of Lemma~\ref{lem:dominoes-direction1-V} is done with Sage
in Section~\ref{sec:appendix}.

\begin{lemma}\label{lem:markersforV}
    The set
    $R = \{v_0, v_1, v_3, v_8, v_9, v_{14}, v_{15}\}$
    is a set of markers in the direction $\be_1$ for~$\V$.
\end{lemma}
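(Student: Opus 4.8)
The plan is to mirror exactly the proof of Lemma~\ref{lem:markersforU}, but now working in the direction $\be_1$ instead of $\be_2$. By Definition~\ref{def:markers}, it suffices to establish three forbidden-pattern statements in $\Omega_\V$: that $R\odot^1 R$, $R\odot^2(\V\setminus R)$, and $(\V\setminus R)\odot^2 R$ are all forbidden. The first is a horizontal adjacency condition and will be read off from the domino computation of Lemma~\ref{lem:dominoes-direction1-V}; the other two are vertical adjacency conditions and will follow from a disjointness property of the top and bottom colours of the tiles.

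For $R\odot^1 R$, I would use that the language of horizontal dominoes actually occurring in $\Omega_\V$ is contained in the set $D_{\V,\be_1,1}$ of horizontal dominoes admitting a surrounding of radius $1$, computed explicitly in Lemma~\ref{lem:dominoes-direction1-V}. Thus it is enough to check that $D_{\V,\be_1,1}\cap R^2=\varnothing$ for $R=\{v_0,v_1,v_3,v_8,v_9,v_{14},v_{15}\}$, which is a finite scan of the thirty listed dominoes: whenever a domino has its left tile in $R$ (this happens precisely for $(v_0,v_4)$, $(v_1,v_5)$, $(v_3,v_6)$, $(v_8,v_{16})$, $(v_9,v_{17})$, $(v_{14},v_{11})$, $(v_{14},v_{18})$, $(v_{15},v_{18})$, $(v_{15},v_{20})$) its right tile lies in $\V\setminus R$, and symmetrically no domino has its right tile in $R$ together with a left tile in $R$. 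Hence $(R\odot^1 R)\cap\L(\Omega_\V)=\varnothing$.

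For the two vertical patterns I would simply tabulate colours. One computes $\sctop(R)=\scbottom(R)=\{L,O\}$ and $\sctop(\V\setminus R)=\scbottom(\V\setminus R)=\{K,M,P\}$, which are disjoint. Since a valid vertical domino $u\odot^2 v$ requires $\sctop(u)=\scbottom(v)$, no tile of $R$ can sit directly below a tile of $\V\setminus R$ and no tile of $\V\setminus R$ can sit directly below a tile of $R$; thus $R\odot^2(\V\setminus R)$ and $(\V\setminus R)\odot^2 R$ are forbidden, in fact already in $\V^{\Z^2}$. Combining the three points with Definition~\ref{def:markers} gives that $R$ is a set of markers in the direction $\be_1$ for $\V$. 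There is no real mathematical obstacle here; the only subtlety, already exploited in Lemma~\ref{lem:markersforU}, is the logical point that $D_{\V,\be_1,1}$ is merely a superset of the true horizontal-domino language of $\Omega_\V$, which is all one needs since disjointness from $R^2$ is inherited by any subset. The main work is the bookkeeping—verifying the emptiness of the intersection with $R^2$ and the colour split $\{L,O\}$ versus $\{K,M,P\}$—and it is routine.
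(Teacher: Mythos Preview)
Your proposal is correct and follows essentially the same approach as the paper's own proof: verify $D_{\V,\be_1,1}\cap R^2=\varnothing$ via Lemma~\ref{lem:dominoes-direction1-V} to handle $R\odot^1 R$, and use the disjoint colour split $\scbottom(R)=\sctop(R)=\{L,O\}$ versus $\scbottom(\V\setminus R)=\sctop(\V\setminus R)=\{K,M,P\}$ to handle the two vertical patterns. The only difference is that you spell out the nine dominoes with left tile in $R$, which the paper leaves implicit.
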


\begin{proof}
    First we show that $R\odot^1 R$ is forbidden in $\Omega_\V$.
    The language of dominoes in direction $\be_1$ in $\Omega_\V$ is a subset
    of the set of dominoes in direction $\be_1$ allowing a surrounding of
    radius~1 in $\Omega_\V$:
    \[
        \left\{(u,v)\in\V^2\mid u\odot^1v\in\L(\Omega_\V)\right\}
         \subseteq D_{\V,\be_1,1}
    \]
    computed in Lemma~\ref{lem:dominoes-direction1-V}.
    Since $D_{\V,\be_1,1}\cap R^2=\varnothing$, we deduce that
    $(R\odot^1 R)\cap\L(\Omega_\V)=\varnothing$, that is
    $R\odot^1 R$ is forbidden in $\Omega_\V$.

    Finally, we remark that
    \begin{align*}
    \scbottom(\V\setminus R) &= \sctop(\V\setminus R) = \{K,M,P\},\\
    \scbottom(R) &= \sctop(R) = \{L,O\},
    \end{align*}
    which are disjoint.
    Thus
    $R\odot^2 (\V\setminus R)$ and
    $(\V\setminus R)\odot^2 R$ 
    are forbidden in $\Omega_\V$.
    We conclude that $R$
    is a set of markers in the direction $\be_1$ for $\V$.
\end{proof}

\begin{figure}[h]
\begin{center}
    \input{article1_beta.tex}
\end{center}
    \caption{The morphism $\beta:\Omega_\W\to\Omega_\V$.}
    \label{fig:beta}
\end{figure}

\begin{proposition}\label{prop:wecandesubstituteV}
    There exists a tile set $\W$ of cardinality 19 and
    a $d$-dimensional morphism $\beta:\Omega_\W\to\Omega_\V$
    that is recognizable in $\Omega_\W$
    and $\beta(\Omega_\W)\cup\sigma^{\be_1}\beta(\Omega_\W)=\Omega_\V$.
\end{proposition}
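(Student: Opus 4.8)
The plan is to mirror the proof of Proposition~\ref{prop:wecandesubstituteU}, now working in the direction $\be_1$ instead of $\be_2$. By Lemma~\ref{lem:markersforV}, the set $R = \{v_0, v_1, v_3, v_8, v_9, v_{14}, v_{15}\}$ is a set of markers in the direction $\be_1$ for $\V$, so Theorem~\ref{thm:exist-homeo} applies with $i=1$; it only remains to make the tile set $\S$ and morphism $\omega$ of that theorem explicit in the present situation and to verify that the resulting set has cardinality $19$.

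First I would record from Lemma~\ref{lem:dominoes-direction1-V} the set $D_{\V,\be_1,1}$ of horizontal dominoes admitting a surrounding of radius $1$; the value $r=1$ is admissible in Theorem~\ref{thm:exist-homeo} precisely because $D_{\V,\be_1,1}\cap R^2=\varnothing$, which was already checked inside the proof of Lemma~\ref{lem:markersforV}. Then I would set
\[
  P_1 = \left\{\, u\boxminus^1 v \;\middle|\; u\in\V\setminus R,\ v\in R,\ (u,v)\in D_{\V,\be_1,1}\,\right\},
\]
\[
  K_1 = \left\{\, u\in\V\setminus R \;\middle|\; \exists\, v\in\V\setminus R \text{ with } (u,v)\in D_{\V,\be_1,1}\,\right\},
\]
and $\W := K_1\cup P_1$. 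Going through the $30$ pairs of Lemma~\ref{lem:dominoes-direction1-V} and keeping those whose right-hand tile lies in $R$ yields the $13$ fusions $v_7\boxminus^1 v_0$, $v_2\boxminus^1 v_3$, $v_4\boxminus^1 v_1$, $v_5\boxminus^1 v_1$, $v_6\boxminus^1 v_1$, $v_7\boxminus^1 v_1$, $v_{20}\boxminus^1 v_9$, $v_{18}\boxminus^1 v_8$, $v_{19}\boxminus^1 v_8$, $v_{12}\boxminus^1 v_{15}$, $v_{11}\boxminus^1 v_{15}$, $v_{13}\boxminus^1 v_{15}$, $v_{10}\boxminus^1 v_{14}$ making up $P_1$; and those tiles of $\V\setminus R$ having a $\be_1$-successor still in $\V\setminus R$ give $K_1 = \{v_4, v_5, v_{16}, v_{17}, v_{18}, v_{20}\}$. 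Hence $|\W| = 13 + 6 = 19$, and I would present $\W$ together with the morphism explicitly in a figure analogous to Figure~\ref{fig:alpha}.

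Finally, I would take $\beta:\Omega_\W\to\Omega_\V$ to be the $2$-dimensional morphism of Equation~\eqref{eq:omegaStoT} with $i=1$ — that is, $u\boxminus^1 v\mapsto u\odot^1 v$ for $u\boxminus^1 v\in P_1$ and $k\mapsto k$ for $k\in K_1$ — and invoke Theorem~\ref{thm:exist-homeo} to conclude that $\beta$ is recognizable in $\Omega_\W$ and that $\beta(\Omega_\W)\cup\sigma^{\be_1}\beta(\Omega_\W)=\Omega_\V$. There is no substantive obstacle beyond bookkeeping: the one point to execute carefully is the partition of the domino list of Lemma~\ref{lem:dominoes-direction1-V} according to whether the right-hand tile of each domino belongs to the marker set $R$, since this drives both the definition of $\W$ and the count $|\W|=19$; one should also confirm, when rewriting the fused tiles of $P_1$ as genuine Wang tiles (with fused vertical colours and the fused horizontal colours $KO$, $MO$, $PO$, $PL$), that distinct pairs give distinct tiles, which they do.
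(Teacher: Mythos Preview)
Your proposal is correct and follows exactly the paper's approach: invoke Lemma~\ref{lem:markersforV}, apply Theorem~\ref{thm:exist-homeo} with $i=1$ and $r=1$, and make the sets $P_1$, $K_1$ explicit from the domino list $D_{\V,\be_1,1}$. Your computations $|P_1|=13$ and $K_1=\{v_4,v_5,v_{16},v_{17},v_{18},v_{20}\}$ are accurate (indeed the paper's printed line $K_1=\{u_8,u_9,u_{11},u_{13},u_{14},u_{15},u_{16},u_{17}\}$ is a copy-paste slip from the proof for $\U$; the explicit tile list for $\W$ confirms your six-element $K_1$). One tiny wording quibble: under $\boxminus^1$ only the horizontal (top/bottom) colours are concatenated, not the vertical ones.
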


\newcommand\WTileO{
\begin{tikzpicture}
[scale=0.900000000000000]
\tikzstyle{every node}=[font=\tiny]
\node at (0.5, 0.5) {0};
\draw (0, 0) -- ++ (0,1);
\draw (0, 0) -- ++ (1,0);
\draw (1, 0) -- ++ (0,1);
\draw (0, 1) -- ++ (1,0);
\node[rotate=0] at (0.800000000000000, 0.5) {I};
\node[rotate=0] at (0.5, 0.800000000000000) {K};
\node[rotate=0] at (0.200000000000000, 0.5) {A};
\node[rotate=0] at (0.5, 0.200000000000000) {K};
\end{tikzpicture}
} 
\newcommand\WTileI{
\begin{tikzpicture}
[scale=0.900000000000000]
\tikzstyle{every node}=[font=\tiny]
\node at (0.5, 0.5) {1};
\draw (0, 0) -- ++ (0,1);
\draw (0, 0) -- ++ (1,0);
\draw (1, 0) -- ++ (0,1);
\draw (0, 1) -- ++ (1,0);
\node[rotate=0] at (0.800000000000000, 0.5) {I};
\node[rotate=0] at (0.5, 0.800000000000000) {K};
\node[rotate=0] at (0.200000000000000, 0.5) {B};
\node[rotate=0] at (0.5, 0.200000000000000) {M};
\end{tikzpicture}
} 
\newcommand\WTileII{
\begin{tikzpicture}
[scale=0.900000000000000]
\tikzstyle{every node}=[font=\tiny]
\node at (0.5, 0.5) {2};
\draw (0, 0) -- ++ (0,1);
\draw (0, 0) -- ++ (1,0);
\draw (1, 0) -- ++ (0,1);
\draw (0, 1) -- ++ (1,0);
\node[rotate=0] at (0.800000000000000, 0.5) {A};
\node[rotate=0] at (0.5, 0.800000000000000) {PL};
\node[rotate=0] at (0.200000000000000, 0.5) {I};
\node[rotate=0] at (0.5, 0.200000000000000) {KO};
\end{tikzpicture}
} 
\newcommand\WTileIII{
\begin{tikzpicture}
[scale=0.900000000000000]
\tikzstyle{every node}=[font=\tiny]
\node at (0.5, 0.5) {3};
\draw (0, 0) -- ++ (0,1);
\draw (0, 0) -- ++ (1,0);
\draw (1, 0) -- ++ (0,1);
\draw (0, 1) -- ++ (1,0);
\node[rotate=0] at (0.800000000000000, 0.5) {G};
\node[rotate=0] at (0.5, 0.800000000000000) {PL};
\node[rotate=0] at (0.200000000000000, 0.5) {I};
\node[rotate=0] at (0.5, 0.200000000000000) {PO};
\end{tikzpicture}
} 
\newcommand\WTileIV{
\begin{tikzpicture}
[scale=0.900000000000000]
\tikzstyle{every node}=[font=\tiny]
\node at (0.5, 0.5) {4};
\draw (0, 0) -- ++ (0,1);
\draw (0, 0) -- ++ (1,0);
\draw (1, 0) -- ++ (0,1);
\draw (0, 1) -- ++ (1,0);
\node[rotate=0] at (0.800000000000000, 0.5) {B};
\node[rotate=0] at (0.5, 0.800000000000000) {KO};
\node[rotate=0] at (0.200000000000000, 0.5) {A};
\node[rotate=0] at (0.5, 0.200000000000000) {KO};
\end{tikzpicture}
} 
\newcommand\WTileV{
\begin{tikzpicture}
[scale=0.900000000000000]
\tikzstyle{every node}=[font=\tiny]
\node at (0.5, 0.5) {5};
\draw (0, 0) -- ++ (0,1);
\draw (0, 0) -- ++ (1,0);
\draw (1, 0) -- ++ (0,1);
\draw (0, 1) -- ++ (1,0);
\node[rotate=0] at (0.800000000000000, 0.5) {B};
\node[rotate=0] at (0.5, 0.800000000000000) {KO};
\node[rotate=0] at (0.200000000000000, 0.5) {B};
\node[rotate=0] at (0.5, 0.200000000000000) {MO};
\end{tikzpicture}
} 
\newcommand\WTileVI{
\begin{tikzpicture}
[scale=0.900000000000000]
\tikzstyle{every node}=[font=\tiny]
\node at (0.5, 0.5) {6};
\draw (0, 0) -- ++ (0,1);
\draw (0, 0) -- ++ (1,0);
\draw (1, 0) -- ++ (0,1);
\draw (0, 1) -- ++ (1,0);
\node[rotate=0] at (0.800000000000000, 0.5) {B};
\node[rotate=0] at (0.5, 0.800000000000000) {PO};
\node[rotate=0] at (0.200000000000000, 0.5) {I};
\node[rotate=0] at (0.5, 0.200000000000000) {KO};
\end{tikzpicture}
} 
\newcommand\WTileVII{
\begin{tikzpicture}
[scale=0.900000000000000]
\tikzstyle{every node}=[font=\tiny]
\node at (0.5, 0.5) {7};
\draw (0, 0) -- ++ (0,1);
\draw (0, 0) -- ++ (1,0);
\draw (1, 0) -- ++ (0,1);
\draw (0, 1) -- ++ (1,0);
\node[rotate=0] at (0.800000000000000, 0.5) {B};
\node[rotate=0] at (0.5, 0.800000000000000) {PO};
\node[rotate=0] at (0.200000000000000, 0.5) {G};
\node[rotate=0] at (0.5, 0.200000000000000) {KO};
\end{tikzpicture}
} 
\newcommand\WTileVIII{
\begin{tikzpicture}
[scale=0.900000000000000]
\tikzstyle{every node}=[font=\tiny]
\node at (0.5, 0.5) {8};
\draw (0, 0) -- ++ (0,1);
\draw (0, 0) -- ++ (1,0);
\draw (1, 0) -- ++ (0,1);
\draw (0, 1) -- ++ (1,0);
\node[rotate=90] at (0.800000000000000, 0.5) {IH};
\node[rotate=0] at (0.5, 0.800000000000000) {K};
\node[rotate=90] at (0.200000000000000, 0.5) {GF};
\node[rotate=0] at (0.5, 0.200000000000000) {K};
\end{tikzpicture}
} 
\newcommand\WTileIX{
\begin{tikzpicture}
[scale=0.900000000000000]
\tikzstyle{every node}=[font=\tiny]
\node at (0.5, 0.5) {9};
\draw (0, 0) -- ++ (0,1);
\draw (0, 0) -- ++ (1,0);
\draw (1, 0) -- ++ (0,1);
\draw (0, 1) -- ++ (1,0);
\node[rotate=90] at (0.800000000000000, 0.5) {ID};
\node[rotate=0] at (0.5, 0.800000000000000) {M};
\node[rotate=90] at (0.200000000000000, 0.5) {AF};
\node[rotate=0] at (0.5, 0.200000000000000) {K};
\end{tikzpicture}
} 
\newcommand\WTileX{
\begin{tikzpicture}
[scale=0.900000000000000]
\tikzstyle{every node}=[font=\tiny]
\node at (0.5, 0.5) {10};
\draw (0, 0) -- ++ (0,1);
\draw (0, 0) -- ++ (1,0);
\draw (1, 0) -- ++ (0,1);
\draw (0, 1) -- ++ (1,0);
\node[rotate=90] at (0.800000000000000, 0.5) {ID};
\node[rotate=0] at (0.5, 0.800000000000000) {M};
\node[rotate=90] at (0.200000000000000, 0.5) {BF};
\node[rotate=0] at (0.5, 0.200000000000000) {M};
\end{tikzpicture}
} 
\newcommand\WTileXI{
\begin{tikzpicture}
[scale=0.900000000000000]
\tikzstyle{every node}=[font=\tiny]
\node at (0.5, 0.5) {11};
\draw (0, 0) -- ++ (0,1);
\draw (0, 0) -- ++ (1,0);
\draw (1, 0) -- ++ (0,1);
\draw (0, 1) -- ++ (1,0);
\node[rotate=90] at (0.800000000000000, 0.5) {IJ};
\node[rotate=0] at (0.5, 0.800000000000000) {M};
\node[rotate=90] at (0.200000000000000, 0.5) {GF};
\node[rotate=0] at (0.5, 0.200000000000000) {K};
\end{tikzpicture}
} 
\newcommand\WTileXII{
\begin{tikzpicture}
[scale=0.900000000000000]
\tikzstyle{every node}=[font=\tiny]
\node at (0.5, 0.5) {12};
\draw (0, 0) -- ++ (0,1);
\draw (0, 0) -- ++ (1,0);
\draw (1, 0) -- ++ (0,1);
\draw (0, 1) -- ++ (1,0);
\node[rotate=90] at (0.800000000000000, 0.5) {AF};
\node[rotate=0] at (0.5, 0.800000000000000) {KO};
\node[rotate=90] at (0.200000000000000, 0.5) {ID};
\node[rotate=0] at (0.5, 0.200000000000000) {KO};
\end{tikzpicture}
} 
\newcommand\WTileXIII{
\begin{tikzpicture}
[scale=0.900000000000000]
\tikzstyle{every node}=[font=\tiny]
\node at (0.5, 0.5) {13};
\draw (0, 0) -- ++ (0,1);
\draw (0, 0) -- ++ (1,0);
\draw (1, 0) -- ++ (0,1);
\draw (0, 1) -- ++ (1,0);
\node[rotate=90] at (0.800000000000000, 0.5) {AF};
\node[rotate=0] at (0.5, 0.800000000000000) {KO};
\node[rotate=90] at (0.200000000000000, 0.5) {GF};
\node[rotate=0] at (0.5, 0.200000000000000) {KO};
\end{tikzpicture}
} 
\newcommand\WTileXIV{
\begin{tikzpicture}
[scale=0.900000000000000]
\tikzstyle{every node}=[font=\tiny]
\node at (0.5, 0.5) {14};
\draw (0, 0) -- ++ (0,1);
\draw (0, 0) -- ++ (1,0);
\draw (1, 0) -- ++ (0,1);
\draw (0, 1) -- ++ (1,0);
\node[rotate=90] at (0.800000000000000, 0.5) {GF};
\node[rotate=0] at (0.5, 0.800000000000000) {KO};
\node[rotate=90] at (0.200000000000000, 0.5) {ID};
\node[rotate=0] at (0.5, 0.200000000000000) {PO};
\end{tikzpicture}
} 
\newcommand\WTileXV{
\begin{tikzpicture}
[scale=0.900000000000000]
\tikzstyle{every node}=[font=\tiny]
\node at (0.5, 0.5) {15};
\draw (0, 0) -- ++ (0,1);
\draw (0, 0) -- ++ (1,0);
\draw (1, 0) -- ++ (0,1);
\draw (0, 1) -- ++ (1,0);
\node[rotate=90] at (0.800000000000000, 0.5) {GF};
\node[rotate=0] at (0.5, 0.800000000000000) {KO};
\node[rotate=90] at (0.200000000000000, 0.5) {GF};
\node[rotate=0] at (0.5, 0.200000000000000) {PO};
\end{tikzpicture}
} 
\newcommand\WTileXVI{
\begin{tikzpicture}
[scale=0.900000000000000]
\tikzstyle{every node}=[font=\tiny]
\node at (0.5, 0.5) {16};
\draw (0, 0) -- ++ (0,1);
\draw (0, 0) -- ++ (1,0);
\draw (1, 0) -- ++ (0,1);
\draw (0, 1) -- ++ (1,0);
\node[rotate=90] at (0.800000000000000, 0.5) {GF};
\node[rotate=0] at (0.5, 0.800000000000000) {PO};
\node[rotate=90] at (0.200000000000000, 0.5) {IH};
\node[rotate=0] at (0.5, 0.200000000000000) {PL};
\end{tikzpicture}
} 
\newcommand\WTileXVII{
\begin{tikzpicture}
[scale=0.900000000000000]
\tikzstyle{every node}=[font=\tiny]
\node at (0.5, 0.5) {17};
\draw (0, 0) -- ++ (0,1);
\draw (0, 0) -- ++ (1,0);
\draw (1, 0) -- ++ (0,1);
\draw (0, 1) -- ++ (1,0);
\node[rotate=90] at (0.800000000000000, 0.5) {GF};
\node[rotate=0] at (0.5, 0.800000000000000) {PO};
\node[rotate=90] at (0.200000000000000, 0.5) {IJ};
\node[rotate=0] at (0.5, 0.200000000000000) {PO};
\end{tikzpicture}
} 
\newcommand\WTileXVIII{
\begin{tikzpicture}
[scale=0.900000000000000]
\tikzstyle{every node}=[font=\tiny]
\node at (0.5, 0.5) {18};
\draw (0, 0) -- ++ (0,1);
\draw (0, 0) -- ++ (1,0);
\draw (1, 0) -- ++ (0,1);
\draw (0, 1) -- ++ (1,0);
\node[rotate=90] at (0.800000000000000, 0.5) {BF};
\node[rotate=0] at (0.5, 0.800000000000000) {MO};
\node[rotate=90] at (0.200000000000000, 0.5) {GF};
\node[rotate=0] at (0.5, 0.200000000000000) {KO};
\end{tikzpicture}
} 

\begin{proof}
    From Lemma~\ref{lem:markersforV},
    $R = \{v_0, v_1, v_3, v_8, v_9, v_{14}, v_{15}\}$
    is a set of markers in the direction $\be_1$ for~$\V$.
    Therefore, Theorem~\ref{thm:exist-homeo} applies.
    Let $P_1$ be the set
    \begin{align*}
        P_1 &= \left\{u\boxminus^1 v \mid 
            u\in\V\setminus R, v\in R 
            \text{ and } 
            u\odot^1v \in D_{\V,\be_1,1}
            \right\}\\
          &=\left\{ \begin{array}{l}
                        
\begin{array}{|cc|}\hline v_{2} & v_{3} \\ \hline \end{array}
\,,\,
\begin{array}{|cc|}\hline v_{4} & v_{1} \\ \hline \end{array}
\,,\,
\begin{array}{|cc|}\hline v_{5} & v_{1} \\ \hline \end{array}
\,,\,
\begin{array}{|cc|}\hline v_{6} & v_{1} \\ \hline \end{array}
\,,\,
\begin{array}{|cc|}\hline v_{7} & v_{0} \\ \hline \end{array}
\,,\,
\begin{array}{|cc|}\hline v_{7} & v_{1} \\ \hline \end{array}
\,,\,
\begin{array}{|cc|}\hline v_{10} & v_{14} \\ \hline \end{array}
\,,\,\\[2mm]
\begin{array}{|cc|}\hline v_{11} & v_{15} \\ \hline \end{array}
\,,\,
\begin{array}{|cc|}\hline v_{12} & v_{15} \\ \hline \end{array}
\,,\,
\begin{array}{|cc|}\hline v_{13} & v_{15} \\ \hline \end{array}
\,,\,
\begin{array}{|cc|}\hline v_{18} & v_{8} \\ \hline \end{array}
\,,\,
\begin{array}{|cc|}\hline v_{19} & v_{8} \\ \hline \end{array}
\,,\,
\begin{array}{|cc|}\hline v_{20} & v_{9} \\ \hline \end{array}

                        \end{array} \right\}
    \end{align*}
    according to Lemma~\ref{lem:dominoes-direction1-V}.
    Let $K_1$ be the set
    \begin{align*}
        K_1 &= \left\{u\in\V\setminus R\mid \text{ there exists } v\in\V\setminus
            R \text{ such that } (u,v) \in D_{\V,\be_1,1}\right\}\\
          &= \{u_8, u_9, u_{11}, u_{13}, u_{14}, u_{15}, u_{16}, u_{17}\}.
    \end{align*}
    Let $\Omega_\W$ be the Wang shift of the Wang tile set $\W=K_1\cup P_1$ 
    of cardinality~19:
\begin{equation*}\label{eq:tiles9}
\begin{array}{c}
\W= \left\{ 
w_0=\hspace{-2mm}\raisebox{-3mm}{\WTileO},
w_1=\hspace{-2mm}\raisebox{-3mm}{\WTileI},
w_2=\hspace{-2mm}\raisebox{-3mm}{\WTileII},
w_3=\hspace{-2mm}\raisebox{-3mm}{\WTileIII},
w_4=\hspace{-2mm}\raisebox{-3mm}{\WTileIV},
w_5=\hspace{-2mm}\raisebox{-3mm}{\WTileV},
w_6=\hspace{-2mm}\raisebox{-3mm}{\WTileVI},
\right.\\
\hspace{1cm}\left.
w_7=\hspace{-2mm}\raisebox{-3mm}{\WTileVII},
w_8=\hspace{-2mm}\raisebox{-3mm}{\WTileVIII},
w_9=\hspace{-2mm}\raisebox{-3mm}{\WTileIX},
w_{10}=\hspace{-2mm}\raisebox{-3mm}{\WTileX},
w_{11}=\hspace{-2mm}\raisebox{-3mm}{\WTileXI},
w_{12}=\hspace{-2mm}\raisebox{-3mm}{\WTileXII},
\right.\\
\hspace{1cm}\left.
w_{13}=\hspace{-2mm}\raisebox{-3mm}{\WTileXIII},
w_{14}=\hspace{-2mm}\raisebox{-3mm}{\WTileXIV},
w_{15}=\hspace{-2mm}\raisebox{-3mm}{\WTileXV},
w_{16}=\hspace{-2mm}\raisebox{-3mm}{\WTileXVI},
w_{17}=\hspace{-2mm}\raisebox{-3mm}{\WTileXVII},
w_{18}=\hspace{-2mm}\raisebox{-3mm}{\WTileXVIII}
\right\}.
\end{array}
\end{equation*}
    Let $\beta:\Omega_\W\to\Omega_\V$ be the $2$-dimensional morphism
    defined by
    \begin{equation*}
        \beta:
    \begin{cases}
        u\boxminus^1 v
        \quad \mapsto \quad
        u\odot^1 v
        & \text{ if }
        u\boxminus^1 v \in P_1\\
        k \quad \mapsto \quad
        k & \text{ if } k \in K_1.
    \end{cases}
    \end{equation*}
    Then $\beta:\Omega_\W\to\Omega_\V$
    is recognizable in $\Omega_\W$
    and $\beta(\Omega_\W)\cup\sigma^{\be_1}\beta(\Omega_\W)=\Omega_\V$.
\end{proof}

The morphism $\beta$ in terms of the
$\V=\{v_i\}_{0\leq i\leq 20}$ and $\W=\{w_i\}_{0\leq i\leq 18}$ 
is in Figure~\ref{fig:beta}.
The tile set $\W$ uses the
following alphabet of 10 vertical colors:
\begin{equation*}
\{A, B, G, I, AF, BF, GF, ID, IH, IJ\}
\end{equation*}
and the following alphabet of 6 horizontal colors:
\begin{equation*}
\{K, M, KO, MO, PL, PO\}.
\end{equation*}
Notice that the set of horizontal left and right colors has increased in size
compared to $\V$ and that the set of vertical bottom and top colors
has \emph{decreased} in size. Indeed, colors E, CH and EH have disappeared.

\section{Self-similarity and aperiodicity of $\Omega_\U$}\label{sec:aperiodicityofU}

Let $\gamma:\U\to\W$ be the map defined
by the rule $u_i\mapsto w_i$ for every $i\in\{0,1,\dots,18\}$.
On tiles, the effect is shown below where the position of tile $u_i$ in the
table on the left is the same as the position of the tile $w_i$ 
in the table on the right:
\begin{equation*}
    \gamma:
\left\{
    \arraycolsep=.5pt
\begin{array}{ccccc}
 \raisebox{-3mm}{\UTileO}    
&\raisebox{-3mm}{\UTileI}   
&\raisebox{-3mm}{\UTileII}  
&\raisebox{-3mm}{\UTileIII} 
&\raisebox{-3mm}{\UTileIV}  
    \\                   
 \raisebox{-3mm}{\UTileV}    
&\raisebox{-3mm}{\UTileVI}  
&\raisebox{-3mm}{\UTileVII} 
&\raisebox{-3mm}{\UTileVIII}
&\raisebox{-3mm}{\UTileIX}  
    \\                   
 \raisebox{-3mm}{\UTileX}    
&\raisebox{-3mm}{\UTileXI}  
&\raisebox{-3mm}{\UTileXII} 
&\raisebox{-3mm}{\UTileXIII}
&\raisebox{-3mm}{\UTileXIV} 
    \\                   
 \raisebox{-3mm}{\UTileXV}   
&\raisebox{-3mm}{\UTileXVI} 
&\raisebox{-3mm}{\UTileXVII}
&\raisebox{-3mm}{\UTileXVIII}
\end{array}
\mapsto
\begin{array}{ccccc}
 \raisebox{-3mm}{\WTileO}
&\raisebox{-3mm}{\WTileI}
&\raisebox{-3mm}{\WTileII}
&\raisebox{-3mm}{\WTileIII}
&\raisebox{-3mm}{\WTileIV}
\\
 \raisebox{-3mm}{\WTileV}
&\raisebox{-3mm}{\WTileVI}
&\raisebox{-3mm}{\WTileVII}
&\raisebox{-3mm}{\WTileVIII}
&\raisebox{-3mm}{\WTileIX}
\\
 \raisebox{-3mm}{\WTileX}
&\raisebox{-3mm}{\WTileXI}
&\raisebox{-3mm}{\WTileXII}
&\raisebox{-3mm}{\WTileXIII}
&\raisebox{-3mm}{\WTileXIV}
\\
 \raisebox{-3mm}{\WTileXV}
&\raisebox{-3mm}{\WTileXVI}
&\raisebox{-3mm}{\WTileXVII}
&\raisebox{-3mm}{\WTileXVIII}
\end{array}
\right.
\end{equation*}

\begin{lemma}\label{lem:homeoUtoW}
$\U$ and $\W$ are equivalent Wang tile sets.
The map $\gamma:\U\to\W$ defines a
$d$-dimensional morphism $\gamma:\Omega_\U\to\Omega_\W$
which is a bijection.
\end{lemma}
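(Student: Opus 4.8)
The plan is to exhibit explicitly the pair of color bijections witnessing the equivalence of $\U$ and $\W$, and then to observe that a letter-to-letter map arising from a tile-set equivalence automatically induces a conjugacy of the associated Wang shifts.

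First I would read off, from the list $u_0,\dots,u_{18}$ (equivalently from Figure~\ref{fig:the-tile-set-U}) and from the list $w_0,\dots,w_{18}$ of Section~\ref{sec:desubstitionofV}, the right/top/left/bottom colors of each pair $(u_k,w_k)$. Comparing the two tables, one finds that all $19$ assignments $u_k\mapsto w_k$ are governed by a single pair of maps: the bijection $\phi$ on vertical colors given by
\[
    A\mapsto IJ,\quad B\mapsto IH,\quad C\mapsto BF,\quad D\mapsto G,\quad E\mapsto AF,\quad
    F\mapsto I,\quad G\mapsto ID,\quad H\mapsto B,\quad I\mapsto GF,\quad J\mapsto A,
\]
and the bijection $\psi$ on horizontal colors given by
\[
    K\mapsto PO,\quad L\mapsto M,\quad M\mapsto PL,\quad N\mapsto MO,\quad O\mapsto K,\quad P\mapsto KO.
\]
One checks that $\phi$ maps the $10$ vertical colors of $\U$ bijectively onto the $10$ vertical colors $\{A,AF,B,BF,G,GF,I,ID,IH,IJ\}$ of $\W$, that $\psi$ maps the $6$ horizontal colors of $\U$ bijectively onto the $6$ horizontal colors $\{K,KO,M,MO,PL,PO\}$ of $\W$, and that for every $k\in\{0,\dots,18\}$, writing $u_k=(a,b,c,d)$ one has $w_k=(\phi(a),\psi(b),\phi(c),\psi(d))$. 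This verification is tile-by-tile and mechanical; the only point requiring care is that the two color maps, read off from any single tile, are consistent across all $19$ tiles, which they are. This establishes that $\U$ and $\W$ are equivalent Wang tile sets and that $\gamma\colon u_k\mapsto w_k$ is precisely the letter-to-letter map $(a,b,c,d)\mapsto(\phi(a),\psi(b),\phi(c),\psi(d))$.

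Next I would deduce the statement about the subshifts. Since $\scright(\gamma(\tau))=\phi(\scright(\tau))$, $\scleft(\gamma(\tau))=\phi(\scleft(\tau))$, $\sctop(\gamma(\tau))=\psi(\sctop(\tau))$ and $\scbottom(\gamma(\tau))=\psi(\scbottom(\tau))$ for every $\tau\in\U$, if $x\colon\Z^2\to\U$ satisfies \eqref{eq:1}--\eqref{eq:2} then so does $\gamma\circ x\colon\Z^2\to\W$; hence $x\mapsto\gamma\circ x$ maps $\Omega_\U$ into $\Omega_\W$. Applied coordinatewise it is a $d$-dimensional morphism, since a letter-to-letter map trivially commutes with $\odot^1$ and $\odot^2$. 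The inverse color bijections $\phi^{-1},\psi^{-1}$ show in the same way that $\gamma^{-1}\colon w_k\mapsto u_k$ induces a $d$-dimensional morphism $\Omega_\W\to\Omega_\U$, and the two induced maps are mutually inverse. Therefore $\gamma\colon\Omega_\U\to\Omega_\W$ is a bijection, as claimed.

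I do not expect any genuine difficulty here: the argument is a finite check plus a one-line transport of the matching conditions through $\phi$ and $\psi$. The only ``obstacle'' is the bookkeeping of extracting the color data of all $19$ tiles of $\U$ and of $\W$ correctly and confirming that the resulting maps $\phi$ and $\psi$ are well defined and surjective onto the color sets of $\W$.
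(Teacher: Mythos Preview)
Your proof is correct and follows essentially the same approach as the paper: you exhibit the same pair of color bijections (your $\phi,\psi$ coincide with the paper's $k,h$) and verify tile-by-tile that $u_k\mapsto w_k$ is the induced map. Your additional paragraph explaining why the letter-to-letter map extends to a bijection $\Omega_\U\to\Omega_\W$ is a welcome elaboration that the paper leaves implicit.
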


\begin{proof}
    We consider the following bijection between the colors of $\U$
and the colors of $\W$:
    \[
h:
\left\{
\begin{array}{ll}
 O\mapsto K& P\mapsto KO\\
 L\mapsto M& N\mapsto MO\\
           & M\mapsto PL\\
           & K\mapsto PO\\
\end{array}\right.,
\qquad
k:
\left\{
\begin{array}{ll}
J\mapsto A&E\mapsto AF\\
H\mapsto B&C\mapsto BF\\
D\mapsto G&I\mapsto GF\\
F\mapsto I&G\mapsto ID\\
          &B\mapsto IH\\
          &A\mapsto IJ\\
\end{array}\right.,
\]
where $h$ relabels the horizontal (bottom and top) colors
and
where $k$ relabels the vertical (left and right) colors.
    For every $i$ in $\{0,1,\ldots, 18\}$, we observe
    that the application of the map $h$ 
    on the bottom and left colors 
    and $k$ on the left and right colors
    transforms bijectively $u_i$ into $w_i$:
    \[
        \gamma(\U) 
        = \{(k(a),h(b),k(c),h(d))\mid(a,b,c,d)\in\U\}
        = \W.
    \]
Thus $\gamma$ is a bijection and $\U$ and $\W$ are equivalent Wang tile sets.
\end{proof}

Consider the $d$-dimensional morphism 
$\omega:\Omega_\U\to\Omega_\U$ defined as
$\omega=\alpha\circ\beta\circ\gamma$ shown in Figure~\ref{fig:omega}.

\begin{figure}[h]
\begin{center}
    \input{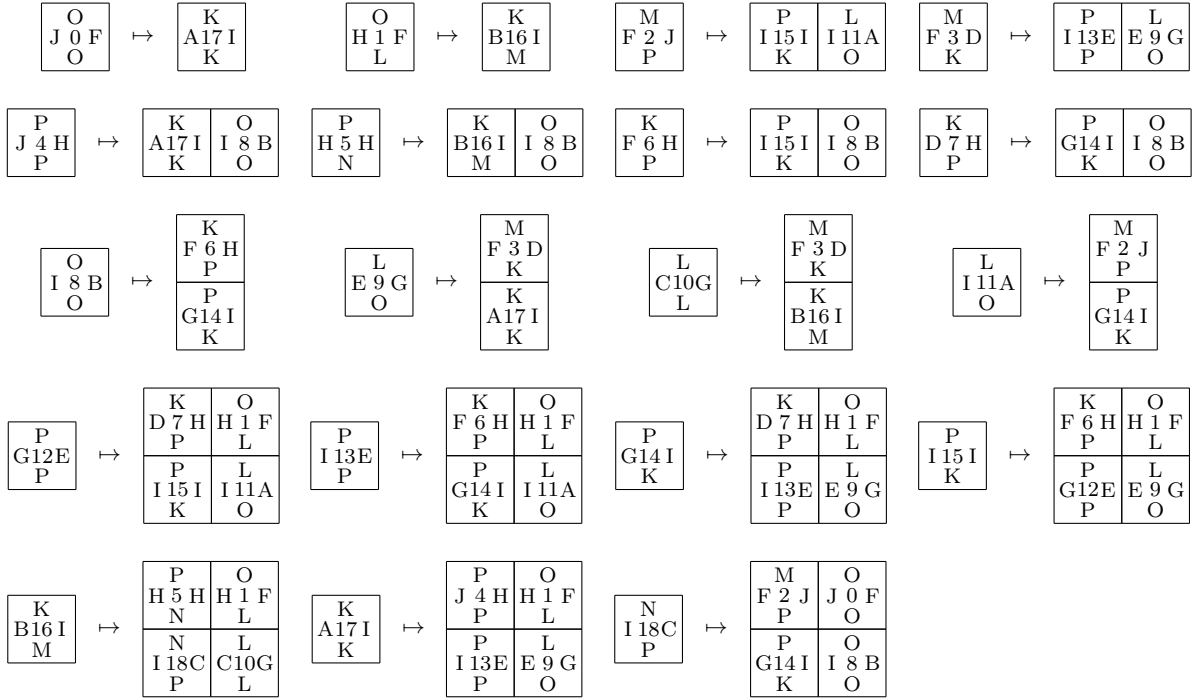}
\end{center}
    \caption{The morphism $\omega:\Omega_\U\to\Omega_\U$.
    The image of each tile $\omega(u_i)$ corresponds to the supertiles
    shown in Figure~\ref{fig:Usupertiles}.}
    \label{fig:omega}
\end{figure}


\begin{lemma}\label{lem:omega-is-primitive}
    The morphism $\omega$ is primitive.
    The characteristic polynomial of its incidence matrix $M$ is
    \[
    \chi_M(x) = x^{3} \cdot (x - 1)^{4} \cdot (x + 1)^{4} \cdot (x^{2} - 3x +
    1) \cdot (x^{2} + x - 1)^{3}.
    \]
    The dominant eigenvalue is
    $\varphi^2=\varphi+1=(3+\sqrt{5})/2$.
    The associated positive right eigenvector is
    \[
    \left(1, 3\varphi^3, \varphi^2, \varphi^3, \varphi, \varphi^2, \varphi^4,
    \varphi^3, \varphi^4, 2\varphi^3, \varphi^2, \varphi^3, \varphi,
    \varphi^4, \varphi^4+\varphi^2, \varphi^3, \varphi^4, \varphi^3, \varphi^2
    \right)^t
    \]
    and associated positive left eigenvector is
    \[
        \left(1, 1, 
        \varphi, \varphi, \varphi, \varphi, \varphi, \varphi, \varphi, \varphi, \varphi, \varphi, 
        \varphi^2, \varphi^2, \varphi^2, \varphi^2, \varphi^2, \varphi^2, \varphi^2\right).
    \]
\end{lemma}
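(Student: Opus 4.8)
The plan is to work entirely with the $19\times19$ \emph{incidence matrix} $M$ of $\omega=\alpha\beta\gamma$, defined by letting $M_{ij}$ be the number of occurrences of the tile $u_i$ in the rectangular word $\omega(u_j)$. Since $\gamma$ maps letters to letters, $\beta$ maps letters to letters or horizontal dominoes (Figure~\ref{fig:beta}), and $\alpha$ maps letters to letters or vertical dominoes (Figure~\ref{fig:alpha}), each $\omega(u_j)$ is a well-defined rectangular pattern --- a single tile, a domino, or a $2\times2$ square, as displayed in Figure~\ref{fig:omega} --- so $M$ is well-defined; and because $\omega$ is a $2$-dimensional morphism $\Omega_\U\to\Omega_\U$, the iterate $\omega^m(u_j)$ is a well-defined word for every $m\in\N$, whose multiset of tiles has population vector $M^m\be_{j+1}$ under the natural indexing. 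In particular, the notion of primitivity from Section~\ref{sec:preliminaries} --- there exists $m$ with $b$ occurring in $\omega^m(a)$ for all $a,b\in\U$ --- is equivalent to the existence of $m$ with $M^m$ strictly positive.

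First I would write down $M$ explicitly from Figure~\ref{fig:omega} and check, by a short computer computation (in the spirit of the Sage proofs of Section~\ref{sec:appendix}), that $M^m$ is strictly positive for some small $m$; this proves primitivity. Then I would compute the characteristic polynomial $\chi_M(x)=\det(xI-M)$ and factor it over $\Q$, obtaining $x^{3}(x-1)^{4}(x+1)^{4}(x^{2}-3x+1)(x^{2}+x-1)^{3}$. Its roots are $0$, $\pm1$, the roots $(3\pm\sqrt5)/2$ of $x^{2}-3x+1$, and the roots $(-1\pm\sqrt5)/2$ of $x^{2}+x-1$. Since $M$ is a primitive nonnegative integer matrix, the Perron--Frobenius theorem guarantees that its spectral radius is a simple eigenvalue strictly dominating all others in modulus; comparing the roots above, this Perron eigenvalue is $(3+\sqrt5)/2=\varphi^2=\varphi+1$, with $\varphi=(1+\sqrt5)/2$, because $(3-\sqrt5)/2<1$, $|(-1\pm\sqrt5)/2|\le\varphi<\varphi^2$, and $1<\varphi^2$.

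It remains to exhibit the Perron eigenvectors. I would write down the proposed right eigenvector $v$ and left eigenvector $u$ and verify $Mv=\varphi^2v$ and $u^{t}M=\varphi^2u^{t}$ by direct substitution, reducing everything to the single identity $\varphi^2=\varphi+1$; positivity of $v$ and $u$ is then immediate, every entry being a nonnegative integer combination of powers of $\varphi>1$. Here the left eigenvector is the transparent one: its entry at $u_j$ equals $1$, $\varphi$ or $\varphi^2$ according to whether $\omega(u_j)$ is a tile, a domino, or a $2\times2$ square, and for each $j$ the relation $u^{t}M=\varphi^2u^{t}$ becomes the tautology $\sum_{u_i\text{ in }\omega(u_j)}u_i=\varphi^2 u_j$, i.e.\ one of $\varphi^2=\varphi^2$, $\varphi^2+\varphi=\varphi^3$, or $\varphi^2+2\varphi+1=\varphi^4$; the right eigenvector is checked the same way, its less uniform entries reflecting backward rather than forward substitution counts.

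The entire argument is routine linear algebra together with one finite matrix computation, so there is no serious obstacle; the only subtlety worth stating is the reduction at the start --- that primitivity in the sense of $d$-dimensional morphisms is faithfully captured by positivity of a power of $M$, and that $\omega^m$ makes sense on individual tiles despite $\omega$ being non-uniform --- which is exactly the content of $\omega$ being a $2$-dimensional morphism on $\Omega_\U$.
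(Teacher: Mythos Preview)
Your proposal is correct and follows essentially the same approach as the paper: write down the incidence matrix $M$, verify computationally that some power $M^m$ is strictly positive (the paper uses $m=7$), and compute the characteristic polynomial and eigenvectors by machine over $\Q(\varphi)$. Your added remarks --- the explicit Perron--Frobenius comparison of roots and the interpretation of the left eigenvector entries as the areas $1,\varphi,\varphi^2$ of the images $\omega(u_j)$ --- are nice conceptual glosses (the latter is exactly the stone-inflation picture of Section~\ref{sec:stone}) but do not change the method.
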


\begin{proof}
The incidence matrix $M$ of $\omega$ is
\begin{equation*}
\left(\begin{array}{rrrrrrrrrrrrrrrrrrr}
0 & 0 & 0 & 0 & 0 & 0 & 0 & 0 & 0 & 0 & 0 & 0 & 0 & 0 & 0 & 0 & 0 & 0 & 1 \\
0 & 0 & 0 & 0 & 0 & 0 & 0 & 0 & 0 & 0 & 0 & 0 & 1 & 1 & 1 & 1 & 1 & 1 & 0 \\
0 & 0 & 0 & 0 & 0 & 0 & 0 & 0 & 0 & 0 & 0 & 1 & 0 & 0 & 0 & 0 & 0 & 0 & 1 \\
0 & 0 & 0 & 0 & 0 & 0 & 0 & 0 & 0 & 1 & 1 & 0 & 0 & 0 & 0 & 0 & 0 & 0 & 0 \\
0 & 0 & 0 & 0 & 0 & 0 & 0 & 0 & 0 & 0 & 0 & 0 & 0 & 0 & 0 & 0 & 0 & 1 & 0 \\
0 & 0 & 0 & 0 & 0 & 0 & 0 & 0 & 0 & 0 & 0 & 0 & 0 & 0 & 0 & 0 & 1 & 0 & 0 \\
0 & 0 & 0 & 0 & 0 & 0 & 0 & 0 & 1 & 0 & 0 & 0 & 0 & 1 & 0 & 1 & 0 & 0 & 0 \\
0 & 0 & 0 & 0 & 0 & 0 & 0 & 0 & 0 & 0 & 0 & 0 & 1 & 0 & 1 & 0 & 0 & 0 & 0 \\
0 & 0 & 0 & 0 & 1 & 1 & 1 & 1 & 0 & 0 & 0 & 0 & 0 & 0 & 0 & 0 & 0 & 0 & 1 \\
0 & 0 & 0 & 1 & 0 & 0 & 0 & 0 & 0 & 0 & 0 & 0 & 0 & 0 & 1 & 1 & 0 & 1 & 0 \\
0 & 0 & 0 & 0 & 0 & 0 & 0 & 0 & 0 & 0 & 0 & 0 & 0 & 0 & 0 & 0 & 1 & 0 & 0 \\
0 & 0 & 1 & 0 & 0 & 0 & 0 & 0 & 0 & 0 & 0 & 0 & 1 & 1 & 0 & 0 & 0 & 0 & 0 \\
0 & 0 & 0 & 0 & 0 & 0 & 0 & 0 & 0 & 0 & 0 & 0 & 0 & 0 & 0 & 1 & 0 & 0 & 0 \\
0 & 0 & 0 & 1 & 0 & 0 & 0 & 0 & 0 & 0 & 0 & 0 & 0 & 0 & 1 & 0 & 0 & 1 & 0 \\
0 & 0 & 0 & 0 & 0 & 0 & 0 & 1 & 1 & 0 & 0 & 1 & 0 & 1 & 0 & 0 & 0 & 0 & 1 \\
0 & 0 & 1 & 0 & 0 & 0 & 1 & 0 & 0 & 0 & 0 & 0 & 1 & 0 & 0 & 0 & 0 & 0 & 0 \\
0 & 1 & 0 & 0 & 0 & 1 & 0 & 0 & 0 & 0 & 1 & 0 & 0 & 0 & 0 & 0 & 0 & 0 & 0 \\
1 & 0 & 0 & 0 & 1 & 0 & 0 & 0 & 0 & 1 & 0 & 0 & 0 & 0 & 0 & 0 & 0 & 0 & 0 \\
0 & 0 & 0 & 0 & 0 & 0 & 0 & 0 & 0 & 0 & 0 & 0 & 0 & 0 & 0 & 0 & 1 & 0 & 0
\end{array}\right)
\end{equation*}
whose $7$-th power $M^7$ is positive. Therefore $M$ is a primitive matrix
and the morphism is primitive.
The computation of characteristic polynomial, eigenvalues and eigenvectors of $M$ is done using Sage
in Section~\ref{sec:appendix}.
\end{proof}

We now prove the main result using the composition-decomposition approach.
Informally, the morphism $\alpha\beta:\Omega_\W\to\Omega_\U$ allows to group
tiles in any tilings admitted by $\U$ in a unique way into supertiles of the
form $\alpha\beta(w_i)$ for $i\in\{0,1,\dots,18\}$, thus satisfying condition 
\ref{cond:C1}.
The morphism $\gamma:\Omega_\U\to\Omega_\W$ implies that
the markings on the supertiles $\W$
imply a matching condition for the supertiles which is exactly equivalent to
that originally specified for the tiles $\U$, thus satisfying condition
\ref{cond:C2}.

\begin{proposition}\label{prop:pre-main}
The $d$-dimensional morphism 
$\omega:\Omega_\U\to\Omega_\U$ defined as
$\omega=\alpha\circ\beta\circ\gamma$ is
expansive, recognizable in $\Omega_\U$
and satisfies
\begin{equation}\label{eq:omega-is-kinda-onto}
    \Omega_\U = \omega(\Omega_\U)
    \cup\sigma^{\be_1}\omega(\Omega_\U)
    \cup\sigma^{\be_2}\omega(\Omega_\U)
    \cup\sigma^{\be_1+\be_2}\omega(\Omega_\U).
\end{equation}
\end{proposition}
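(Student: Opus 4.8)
The plan is to prove the three assertions — that $\omega$ is expansive, that $\omega$ is recognizable in $\Omega_\U$, and the identity \eqref{eq:omega-is-kinda-onto} — separately, each by chaining the corresponding property of the three factors $\gamma\colon\Omega_\U\to\Omega_\W$, $\beta\colon\Omega_\W\to\Omega_\V$, $\alpha\colon\Omega_\V\to\Omega_\U$ supplied by Lemma~\ref{lem:homeoUtoW}, Proposition~\ref{prop:wecandesubstituteV} and Proposition~\ref{prop:wecandesubstituteU}. Since each factor is a $d$-dimensional morphism, the composite $\omega=\alpha\circ\beta\circ\gamma$ is a $d$-dimensional morphism $\Omega_\U\to\Omega_\U$; in particular $\omega(\Omega_\U)\subseteq\Omega_\U$ and, $\Omega_\U$ being a subshift, $\sigma^{\bn}\omega(\Omega_\U)\subseteq\Omega_\U$ for all $\bn$, so only the reverse inclusion in \eqref{eq:omega-is-kinda-onto} needs an argument.

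\textbf{Expansiveness.} For a finite $2$-dimensional word $z$ and each $i\in\{1,2\}$, the $i$-th coordinate of $\shape(\omega(z))$ is a sum of $\bigl(\shape(z)\bigr)_i$ strictly positive terms, hence $\shape(\omega^{m+1}(a))\geq\shape(\omega^m(a))$ coordinatewise for every $a\in\U$ and $m\in\N$. By Lemma~\ref{lem:omega-is-primitive} the morphism $\omega$ is primitive, so there is $m_0$ with $\omega^{m_0}(a)$ containing every tile of $\U$ as a subword, for all $a$; inspecting Figure~\ref{fig:omega}, $\U$ contains tiles whose $\omega$-image has width $2$ and tiles whose $\omega$-image has height $2$, which by primitivity forces neither coordinate of $\shape(\omega^m(a))$ to be eventually constant. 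Hence $\min(\shape(\omega^m(a)))\to\infty$ and $\omega$ is expansive. (Equivalently: by Lemma~\ref{lem:omega-is-primitive} the expansion factor $\varphi^2=(3+\sqrt5)/2$ of the incidence matrix is $>1$.)

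\textbf{Recognizability.} I would first establish the general fact that a composition of recognizable morphisms is recognizable: if $\omega_1\colon X_0\to X_1$ is recognizable in $X_0$ and $\omega_2\colon X_1\to X_2$ is recognizable in $X_1$, then $\omega_2\circ\omega_1$ is recognizable in $X_0$. Indeed, let $(\bk,x)$ and $(\bk',x')$ be centered $(\omega_2\circ\omega_1)$-representations of the same point $z$, and put $y=\omega_1(x)$, $y'=\omega_1(x')\in X_1$. Since $\bk<\shape((\omega_2\circ\omega_1)(x_\zero))=\shape(\omega_2(\omega_1(x_\zero)))$, the cell at position $\bk$ of $\omega_2(y)$ lies in the $\omega_2$-image of the tile $y_\bp$ of $\omega_1(x_\zero)$ at a unique position $\bp$ with $\zero\leq\bp<\shape(\omega_1(x_\zero))$, at offset $\bu$ with $\zero\leq\bu<\shape(\omega_2(y_\bp))$. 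Shifting $y$ so that $y_\bp$ sits at the origin shifts $\omega_2(y)$ so that $\omega_2(y_\bp)$ sits at the origin, giving $z=\sigma^{\bu}\omega_2(\sigma^{\bp}y)$; thus $(\bu,\sigma^{\bp}y)$ is a centered $\omega_2$-representation of $z$ and $(\bp,x)$ a centered $\omega_1$-representation of $\sigma^{\bp}y$. Doing the same with primed data and applying recognizability of $\omega_2$ in $X_1$ yields $\bu=\bu'$ and $\sigma^{\bp}y=\sigma^{\bp'}y'$; recognizability of $\omega_1$ in $X_0$ then gives $\bp=\bp'$ and $x=x'$, hence $y=y'$, and finally $\bk=\bk'$ because $\bk$ is recovered from $x$, $\bp$ and $\bu$. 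Applying this twice — first to $\beta\circ\gamma$, using that $\gamma$ is a letter-to-letter bijection (Lemma~\ref{lem:homeoUtoW}), hence trivially recognizable in $\Omega_\U$, and that $\beta$ is recognizable in $\Omega_\W$ (Proposition~\ref{prop:wecandesubstituteV}); then to $\alpha\circ(\beta\circ\gamma)$, using that $\alpha$ is recognizable in $\Omega_\V$ (Proposition~\ref{prop:wecandesubstituteU}) — proves $\omega$ is recognizable in $\Omega_\U$.

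\textbf{Surjectivity up to a shift, and the main obstacle.} By Lemma~\ref{lem:homeoUtoW}, $\gamma(\Omega_\U)=\Omega_\W$, so Proposition~\ref{prop:wecandesubstituteV} gives $\Omega_\V=\beta\gamma(\Omega_\U)\cup\sigma^{\be_1}\beta\gamma(\Omega_\U)$, hence $\alpha(\Omega_\V)=\alpha\bigl(\beta\gamma(\Omega_\U)\bigr)\cup\alpha\bigl(\sigma^{\be_1}\beta\gamma(\Omega_\U)\bigr)$. Every tile of $\V$ is sent by $\alpha$ to a tile or a vertical domino — an image of width $1$ — so each column of $y\in\Omega_\V$ maps to one column of $\alpha(y)$ and $\alpha$ commutes with $\sigma^{\be_1}$; therefore $\alpha(\Omega_\V)=\omega(\Omega_\U)\cup\sigma^{\be_1}\omega(\Omega_\U)$. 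Substituting this into $\Omega_\U=\alpha(\Omega_\V)\cup\sigma^{\be_2}\alpha(\Omega_\V)$ from Proposition~\ref{prop:wecandesubstituteU} and using $\sigma^{\be_2}\sigma^{\be_1}=\sigma^{\be_1+\be_2}$ yields exactly \eqref{eq:omega-is-kinda-onto}. Of the three parts, surjectivity is a short computation and expansiveness reduces to primitivity; the delicate point is the recognizability composition, precisely because the shift vector $\bk$ of a centered $\omega$-representation need not be a centered shift vector for $\alpha$ alone, so the offset must be re-based through the tiling $\omega_1(x_\zero)$ before the recognizability of either factor (and ultimately Lemma~\ref{lem:recognizable}, which powers Proposition~\ref{prop:wecandesubstituteU} and Proposition~\ref{prop:wecandesubstituteV}) can be invoked.
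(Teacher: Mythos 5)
Your proof is correct and follows essentially the same route as the paper: expansiveness from primitivity of the incidence matrix plus the existence of letters whose images grow in both directions, the identity \eqref{eq:omega-is-kinda-onto} from chaining the two surjectivity-up-to-shift equations via the commutation $\alpha\sigma^{\be_1}=\sigma^{\be_1}\alpha$, and recognizability by composing the recognizability of $\alpha$, $\beta$ and $\gamma$. The only difference is that you spell out the lemma that a composition of recognizable morphisms is recognizable (re-basing the centered shift vector through $\omega_1(x_\zero)$), a step the paper asserts without proof; your argument for it is sound.
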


\begin{proof}
From Proposition~\ref{prop:wecandesubstituteU},
Proposition~\ref{prop:wecandesubstituteV} and
Lemma~\ref{lem:homeoUtoW},
we have the sequence of recognizable $2$-dimensional morphisms:
\[
    \Omega_\U
    \xleftarrow{\alpha}
    \Omega_\V
    \xleftarrow{\beta}
    \Omega_\W
    \xleftarrow{\gamma}
    \Omega_\U.
\]
and the equations
$\alpha(\Omega_\V)\cup\sigma^{\be_2}\alpha(\Omega_\V)=\Omega_\U$
and
$\beta(\Omega_\W)\cup\sigma^{\be_1}\beta(\Omega_\W)=\Omega_\V$.
Consider the $d$-dimensional morphism $\omega=\alpha\beta\gamma$ which
is recognizable in $\Omega_\U$.
Since $\alpha\sigma^{\be_1}= \sigma^{\be_1}\alpha$, we deduce Equation
\eqref{eq:omega-is-kinda-onto}.
As shown in  Lemma~\ref{lem:omega-is-primitive}, $\omega$ is primitive
and sends at least one letter to a $2$-dimensional word of shape $(2,2)$.
Thus it is expansive.
\end{proof}

We can now prove the aperiodicity of $\U$ from the
recognizability of $\omega$.

\begin{corollary}\label{cor:OmegaU-is-aperiodic}
    $\overline{\omega(\Omega_\U)}^{\sigma}=\Omega_\U$,
    $\Omega_\U$ is self-similar and aperiodic.
\end{corollary}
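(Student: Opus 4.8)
The plan is to assemble Corollary~\ref{cor:OmegaU-is-aperiodic} directly from Proposition~\ref{prop:pre-main} and Proposition~\ref{prop:expansive-recognizable-aperiodic}, the only genuine step being the identity $\overline{\omega(\Omega_\U)}^{\sigma}=\Omega_\U$. First I would prove that identity. The inclusion $\overline{\omega(\Omega_\U)}^{\sigma}\subseteq\Omega_\U$ is immediate: by Proposition~\ref{prop:pre-main} the morphism $\omega=\alpha\beta\gamma$ maps $\Omega_\U$ into $\Omega_\U$, and $\Omega_\U$ is shift-invariant, so every $\sigma^{\bk}\omega(x)$ with $x\in\Omega_\U$ and $\bk\in\Z^2$ lies in $\Omega_\U$. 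For the reverse inclusion, Equation~\eqref{eq:omega-is-kinda-onto} of Proposition~\ref{prop:pre-main} states $\Omega_\U=\omega(\Omega_\U)\cup\sigma^{\be_1}\omega(\Omega_\U)\cup\sigma^{\be_2}\omega(\Omega_\U)\cup\sigma^{\be_1+\be_2}\omega(\Omega_\U)$, and each of the four sets on the right is, by the very definition of $\overline{\,\cdot\,}^{\sigma}$, contained in $\overline{\omega(\Omega_\U)}^{\sigma}$. Hence $\Omega_\U\subseteq\overline{\omega(\Omega_\U)}^{\sigma}$ and the two sets are equal.

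Self-similarity of $\Omega_\U$ then follows at once from the definition of a self-similar subshift: the map $\omega$ restricted to tiles is an expansive $2$-dimensional morphism $\U\to\U^{*^2}$ (Proposition~\ref{prop:pre-main}, cf.\ Figure~\ref{fig:omega}) satisfying $\overline{\omega(\Omega_\U)}^{\sigma}=\Omega_\U$.

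For aperiodicity I would invoke Proposition~\ref{prop:expansive-recognizable-aperiodic} with $X=\Omega_\U$: we have just shown $\Omega_\U$ is a self-similar subshift with $\overline{\omega(\Omega_\U)}^{\sigma}=\Omega_\U$, the morphism $\omega$ is expansive, and $\omega$ is recognizable in $\Omega_\U$ (Proposition~\ref{prop:pre-main}); the proposition therefore yields that $\Omega_\U$ is aperiodic. It remains only to record that $\Omega_\U\neq\varnothing$, so that the conclusion is not vacuous: since $\omega$ is primitive (Lemma~\ref{lem:omega-is-primitive}) and expansive, and maps each tile of $\U$ to a valid Wang patch over $\U$ (Figure~\ref{fig:omega}), the patches $\omega^{n}(a)$ for $a\in\U$ are valid, their shapes tend to infinity, and a limit point of suitably translated copies of them is a tiling in $\Omega_\U$.

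I do not expect any serious obstacle in this corollary itself, since all the analytic content has been pushed upstream: the bookkeeping of which translations can occur (only $\bk\in\{\zero,\be_1,\be_2,\be_1+\be_2\}$, because $\alpha$ contributes vertical dominoes and $\beta$ horizontal ones, so $\omega$ contributes tiles, horizontal or vertical dominoes, or $2\times 2$ squares) is precisely what Proposition~\ref{prop:pre-main} packages into Equation~\eqref{eq:omega-is-kinda-onto}, and recognizability of $\omega$ is inherited from the recognizability of $\alpha$ and $\beta$ established via Theorem~\ref{thm:exist-homeo} in Sections~\ref{sec:desubstitionofU} and~\ref{sec:desubstitionofV}. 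The one point requiring a sentence of care is making sure ``aperiodic'' is understood in the nonempty sense, which is why I would include the short nonemptiness remark above.
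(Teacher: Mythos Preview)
Your proof is correct and follows essentially the same route as the paper: derive $\overline{\omega(\Omega_\U)}^{\sigma}=\Omega_\U$ from Equation~\eqref{eq:omega-is-kinda-onto} of Proposition~\ref{prop:pre-main}, read off self-similarity, and then apply Proposition~\ref{prop:expansive-recognizable-aperiodic} using the expansiveness and recognizability also supplied by Proposition~\ref{prop:pre-main}. The paper compresses your two inclusions into a single line (``taking the closure under the shift on both sides of~\eqref{eq:omega-is-kinda-onto}'') and omits your nonemptiness remark, but the substance is identical.
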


\begin{proof}
    From Proposition~\ref{prop:pre-main}, $\omega$ is expansive and
    recognizable in $\Omega_\U$.
    By taking the closure under the shift $\sigma$ on both sides of
    Equation~\eqref{eq:omega-is-kinda-onto}, we deduce that
    $\Omega_\U=\overline{\Omega_\U}^\sigma
    =\overline{\omega(\Omega_\U)}^{\sigma}$.
    Thus $\Omega_\U$ is self-similar.
    We conclude aperiodicity of $\Omega_\U$ from
    Proposition~\ref{prop:expansive-recognizable-aperiodic}.
\end{proof}

The next lemma shows that the subwords of shape $(2,2)$ in 
$\Omega_\U$ and $\X_\omega$ are the same.

\begin{lemma}\label{lem:50tiles-2x2-in-OmegaU}
    We have
    $\L(\X_\omega)\cap \U^{(2,2)} = \L(\Omega_\U)\cap \U^{(2,2)}=S$
    where $S$ is
    \begin{equation*}\arraycolsep=1pt
        \left\{\begin{array}{cccccccc}
\left(\begin{array}{rr}
u_{8} & u_{16} \\
u_{0} & u_{3}
\end{array}\right)
,&
\left(\begin{array}{rr}
u_{8} & u_{16} \\
u_{1} & u_{2}
\end{array}\right)
,&
\left(\begin{array}{rr}
u_{8} & u_{16} \\
u_{1} & u_{3}
\end{array}\right)
,&
\left(\begin{array}{rr}
u_{9} & u_{14} \\
u_{1} & u_{6}
\end{array}\right)
,&
\left(\begin{array}{rr}
u_{11} & u_{17} \\
u_{1} & u_{6}
\end{array}\right)
,&
\left(\begin{array}{rr}
u_{16} & u_{8} \\
u_{2} & u_{0}
\end{array}\right)
,&
\left(\begin{array}{rr}
u_{16} & u_{13} \\
u_{2} & u_{4}
\end{array}\right)
,&
\left(\begin{array}{rr}
u_{16} & u_{15} \\
u_{3} & u_{7}
\end{array}\right)
,\\[4mm]
\left(\begin{array}{rr}
u_{13} & u_{9} \\
u_{4} & u_{1}
\end{array}\right)
,&
\left(\begin{array}{rr}
u_{13} & u_{9} \\
u_{5} & u_{1}
\end{array}\right)
,&
\left(\begin{array}{rr}
u_{14} & u_{8} \\
u_{6} & u_{1}
\end{array}\right)
,&
\left(\begin{array}{rr}
u_{14} & u_{11} \\
u_{6} & u_{1}
\end{array}\right)
,&
\left(\begin{array}{rr}
u_{14} & u_{13} \\
u_{6} & u_{5}
\end{array}\right)
,&
\left(\begin{array}{rr}
u_{17} & u_{8} \\
u_{6} & u_{1}
\end{array}\right)
,&
\left(\begin{array}{rr}
u_{17} & u_{13} \\
u_{6} & u_{5}
\end{array}\right)
,&
\left(\begin{array}{rr}
u_{15} & u_{8} \\
u_{7} & u_{1}
\end{array}\right)
,\\[4mm]
\left(\begin{array}{rr}
u_{15} & u_{11} \\
u_{7} & u_{1}
\end{array}\right)
,&
\left(\begin{array}{rr}
u_{0} & u_{3} \\
u_{8} & u_{16}
\end{array}\right)
,&
\left(\begin{array}{rr}
u_{9} & u_{14} \\
u_{8} & u_{16}
\end{array}\right)
,&
\left(\begin{array}{rr}
u_{11} & u_{17} \\
u_{8} & u_{16}
\end{array}\right)
,&
\left(\begin{array}{rr}
u_{1} & u_{2} \\
u_{9} & u_{14}
\end{array}\right)
,&
\left(\begin{array}{rr}
u_{1} & u_{6} \\
u_{9} & u_{14}
\end{array}\right)
,&
\left(\begin{array}{rr}
u_{10} & u_{12} \\
u_{9} & u_{14}
\end{array}\right)
,&
\left(\begin{array}{rr}
u_{1} & u_{6} \\
u_{10} & u_{12}
\end{array}\right)
,\\[4mm]
\left(\begin{array}{rr}
u_{1} & u_{6} \\
u_{10} & u_{14}
\end{array}\right)
,&
\left(\begin{array}{rr}
u_{1} & u_{3} \\
u_{11} & u_{17}
\end{array}\right)
,&
\left(\begin{array}{rr}
u_{10} & u_{14} \\
u_{11} & u_{17}
\end{array}\right)
,&
\left(\begin{array}{rr}
u_{6} & u_{1} \\
u_{12} & u_{9}
\end{array}\right)
,&
\left(\begin{array}{rr}
u_{4} & u_{1} \\
u_{13} & u_{9}
\end{array}\right)
,&
\left(\begin{array}{rr}
u_{7} & u_{1} \\
u_{13} & u_{9}
\end{array}\right)
,&
\left(\begin{array}{rr}
u_{18} & u_{10} \\
u_{13} & u_{9}
\end{array}\right)
,&
\left(\begin{array}{rr}
u_{2} & u_{0} \\
u_{14} & u_{8}
\end{array}\right)
,\\[4mm]
\left(\begin{array}{rr}
u_{2} & u_{4} \\
u_{14} & u_{13}
\end{array}\right)
,&
\left(\begin{array}{rr}
u_{6} & u_{1} \\
u_{14} & u_{11}
\end{array}\right)
,&
\left(\begin{array}{rr}
u_{6} & u_{5} \\
u_{14} & u_{18}
\end{array}\right)
,&
\left(\begin{array}{rr}
u_{12} & u_{9} \\
u_{14} & u_{8}
\end{array}\right)
,&
\left(\begin{array}{rr}
u_{7} & u_{1} \\
u_{15} & u_{11}
\end{array}\right)
,&
\left(\begin{array}{rr}
u_{13} & u_{9} \\
u_{15} & u_{8}
\end{array}\right)
,&
\left(\begin{array}{rr}
u_{18} & u_{10} \\
u_{15} & u_{11}
\end{array}\right)
,&
\left(\begin{array}{rr}
u_{3} & u_{7} \\
u_{16} & u_{13}
\end{array}\right)
,\\[4mm]
\left(\begin{array}{rr}
u_{3} & u_{7} \\
u_{16} & u_{15}
\end{array}\right)
,&
\left(\begin{array}{rr}
u_{14} & u_{11} \\
u_{16} & u_{8}
\end{array}\right)
,&
\left(\begin{array}{rr}
u_{14} & u_{18} \\
u_{16} & u_{13}
\end{array}\right)
,&
\left(\begin{array}{rr}
u_{14} & u_{13} \\
u_{16} & u_{15}
\end{array}\right)
,&
\left(\begin{array}{rr}
u_{14} & u_{18} \\
u_{16} & u_{15}
\end{array}\right)
,&
\left(\begin{array}{rr}
u_{17} & u_{13} \\
u_{16} & u_{15}
\end{array}\right)
,&
\left(\begin{array}{rr}
u_{3} & u_{7} \\
u_{17} & u_{13}
\end{array}\right)
,&
\left(\begin{array}{rr}
u_{14} & u_{11} \\
u_{17} & u_{8}
\end{array}\right)
,\\[4mm]
\left(\begin{array}{rr}
u_{14} & u_{18} \\
u_{17} & u_{13}
\end{array}\right)
,&
\left(\begin{array}{rr}
u_{5} & u_{1} \\
u_{18} & u_{10}
\end{array}\right)
\end{array}\right\}
    \end{equation*}
\end{lemma}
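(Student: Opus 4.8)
The plan is to establish the two equalities by proving the chain of inclusions
\[
    S
    \;\subseteq\;
    \L(\X_\omega)\cap\U^{(2,2)}
    \;\subseteq\;
    \L(\Omega_\U)\cap\U^{(2,2)}
    \;\subseteq\;
    S,
\]
from which all three sets coincide. The middle inclusion is free: by Lemma~\ref{lem:omega-is-primitive} the morphism $\omega$ is primitive and by Corollary~\ref{cor:OmegaU-is-aperiodic} we have $\overline{\omega(\Omega_\U)}^\sigma=\Omega_\U$, so $\X_\omega$ being the smallest nonempty subshift fixed by $\omega$ gives $\X_\omega\subseteq\Omega_\U$ and hence $\L(\X_\omega)\subseteq\L(\Omega_\U)$. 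It is important that the proof of this lemma not invoke minimality of $\Omega_\U$: the statement is precisely condition~(iv) of Lemma~\ref{lem:substitutive-equivalent-conditions} for $L=\L(\Omega_\U)$, and it is this lemma together with that equivalence that will later yield $\Omega_\U=\X_\omega$ and its minimality.

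For the first inclusion I would first note that $\L(\X_\omega)\cap\U^{(2,2)}=\L_\omega\cap\U^{(2,2)}$, the standard fact for primitive morphisms (every shape-$(2,2)$ word of $\L_\omega$ appears in some $\omega^n(u_i)$, hence by primitivity in $\omega^m(u_j)$ for every $j$ and all large $m$, hence in the language of some element of $\X_\omega$); in particular this set is finite and computable by iterating $\omega$ until the collection of shape-$(2,2)$ subwords of the $\omega^n(u_i)$ stabilizes. Then one checks that each of the $50$ patterns listed in $S$ occurs as a shape-$(2,2)$ subword of $\omega^n(u_i)$ for some $i$ and some small $n$ --- several already occur inside the images $\omega(u_i)$ of single tiles (Figure~\ref{fig:omega}), and the rest occur in $\omega^2$ or $\omega^3$. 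This is a finite verification, carried out with Sage in Section~\ref{sec:appendix}.

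The bulk of the work is the last inclusion $\L(\Omega_\U)\cap\U^{(2,2)}\subseteq S$. The key observation is that any shape-$(2,2)$ word occurring in a valid tiling of $\Omega_\U$ extends, for every $r\in\N$, to a valid pattern of shape $(2r+2,2r+2)$, i.e.\ it admits a surrounding of radius $r$ in $\Omega_\U$. So it suffices to compute, for one sufficiently large radius $r$, the set of shape-$(2,2)$ words admitting a surrounding of radius $r$ in $\Omega_\U$ --- exactly as the sets of dominoes were computed in Lemma~\ref{lem:dominoes-direction2-U} and Lemma~\ref{lem:dominoes-direction1-V}, but now in two dimensions, using iterated products of the transducer $M_\U$ in both directions (Equation~\eqref{eq:boxminus_with_transducer}) together with recursive removal of source and sink states --- and to verify that this set equals $S$. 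Since the previous step gives $S\subseteq\L(\X_\omega)\subseteq\L(\Omega_\U)$, this pins all three sets down to $S$.

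I expect this last computation to be the main obstacle: two-dimensional surroundings are far more expensive to enumerate than the one-dimensional dominoes of the earlier lemmas, so the transducer encoding and the source/sink filtering are essential to keep the intermediate tile sets from exploding; moreover one must check that the radius $r$ chosen is large enough for the computed set to have stabilized to $S$, which is guaranteed here by the recognizable, primitive, self-similar structure already established and is confirmed by the computation. The Sage script and the explicit value of $r$ are deferred to Section~\ref{sec:appendix}.
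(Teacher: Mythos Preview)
Your proposal is correct and follows essentially the same approach as the paper: the same chain of inclusions, the same use of $\L_\omega\subseteq\L(\Omega_\U)$ for the middle inclusion, a Sage enumeration of the $2\times 2$ factors of $\omega$ for $S\subseteq\L_\omega$, and a surrounding-radius computation for $\L(\Omega_\U)\cap\U^{(2,2)}\subseteq S$. The only difference is concreteness: the paper uses radius $r=1$ directly (filtering the $55$ tiles of $(\U\boxminus^1\U)\boxminus^2(\U\boxminus^1\U)$ by existence of a $3\times 3$ extension) rather than leaving $r$ unspecified, and note that your stabilization concern is moot---once the radius-$r$ set equals $S$ and $S\subseteq\L(\X_\omega)$ is already established, the chain closes without any separate stabilization check.
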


The proof of Lemma~\ref{lem:50tiles-2x2-in-OmegaU} is done using Sage
in Section~\ref{sec:appendix}.

\begin{proposition}\label{prop:OmegaU-is-self-similar}
    $\Omega_\U$ is minimal. More precisely,
    $\Omega_\U=\X_\omega$ where $\omega=\alpha\beta\gamma$.
\end{proposition}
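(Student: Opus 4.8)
The plan is to apply Lemma~\ref{lem:substitutive-equivalent-conditions} to the morphism $\omega=\alpha\beta\gamma:\Omega_\U\to\Omega_\U$. By Lemma~\ref{lem:omega-is-primitive}, $\omega$ is expansive and primitive. By Corollary~\ref{cor:OmegaU-is-aperiodic}, the language $L=\L(\Omega_\U)$ satisfies $L=\overline{\omega(L)}^{Fact}$ (this is exactly the restatement via Lemma~\ref{lem:existence-omega-representation} of $\overline{\omega(\Omega_\U)}^\sigma=\Omega_\U$). So all hypotheses of Lemma~\ref{lem:substitutive-equivalent-conditions} hold with $d=2$, $\A=\U$ and this $L$; in particular $\L_\omega\subseteq\L(\Omega_\U)$, hence $\X_\omega\subseteq\Omega_\U$, and the four conditions (i)--(iv) are equivalent.

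The strategy is then to verify condition (iv): $L\cap\U^{(2,2)}=\L_\omega\cap\U^{(2,2)}$, i.e. the set of $2\times2$ patterns occurring in tilings of $\Omega_\U$ equals the set of $2\times2$ patterns occurring in the iterates $\omega^n(u_i)$. This is precisely the content of Lemma~\ref{lem:50tiles-2x2-in-OmegaU}, which asserts $\L(\X_\omega)\cap\U^{(2,2)}=\L(\Omega_\U)\cap\U^{(2,2)}=S$ for the explicit $50$-element set $S$. Invoking that lemma closes the argument: by the equivalence (iv)$\Rightarrow$(i) we get $\L(\Omega_\U)=\L_\omega$, hence $\Omega_\U=\X_{\L(\Omega_\U)}=\X_{\L_\omega}=\X_\omega$, and by (iv)$\Rightarrow$(ii) $\X_L$ is minimal, so $\Omega_\U$ is minimal. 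This proves both assertions of the proposition.

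The one point that needs care is matching the two forms of the hypothesis ``$L=\overline{\omega(L)}^{Fact}$'' used by Lemma~\ref{lem:substitutive-equivalent-conditions} with what we actually have. We have $\overline{\omega(\Omega_\U)}^\sigma=\Omega_\U$ from Corollary~\ref{cor:OmegaU-is-aperiodic}, and Lemma~\ref{lem:existence-omega-representation} (applied with $X=Y=\Omega_\U$) translates this into $\L_{\Omega_\U}=\overline{\omega(\L_{\Omega_\U})}^{Fact}$, which is exactly the hypothesis needed. The only genuine computational input is Lemma~\ref{lem:50tiles-2x2-in-OmegaU}, whose Sage proof appears in Section~\ref{sec:appendix}; everything else is a bookkeeping application of the equivalences already established. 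I expect the main (and essentially the only) obstacle to be this finite check that no $2\times2$ pattern survives in $\Omega_\U$ that is absent from $\L_\omega$ — but since it is an explicitly stated lemma here, the proof of the proposition itself is short.

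\begin{proof}
    By Lemma~\ref{lem:omega-is-primitive}, $\omega=\alpha\beta\gamma$ is
    expansive and primitive.
    By Corollary~\ref{cor:OmegaU-is-aperiodic},
    $\overline{\omega(\Omega_\U)}^{\sigma}=\Omega_\U$, which by
    Lemma~\ref{lem:existence-omega-representation} is equivalent to
    $\L(\Omega_\U)=\overline{\omega(\L(\Omega_\U))}^{Fact}$.
    Thus the hypotheses of
    Lemma~\ref{lem:substitutive-equivalent-conditions}
    hold with $d=2$, $\A=\U$ and $L=\L(\Omega_\U)$, so
    $\L_\omega\subseteq\L(\Omega_\U)$ and conditions (i)--(iv) of that lemma are
    equivalent for $L=\L(\Omega_\U)$.
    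By Lemma~\ref{lem:50tiles-2x2-in-OmegaU},
    \[
        \L(\Omega_\U)\cap\U^{(2,2)} = S = \L(\X_\omega)\cap\U^{(2,2)}
        = \L_\omega\cap\U^{(2,2)},
    \]
    which is condition (iv).
    Hence condition (i) holds: $\L(\Omega_\U)=\L_\omega$, so
    $\Omega_\U=\X_{\L(\Omega_\U)}=\X_{\L_\omega}=\X_\omega$.
    Moreover condition (ii) holds, so $\Omega_\U=\X_{\L(\Omega_\U)}$ is minimal.
\end{proof}
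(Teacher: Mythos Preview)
Your proof is correct and follows essentially the same approach as the paper's: both invoke Corollary~\ref{cor:OmegaU-is-aperiodic} and Lemma~\ref{lem:existence-omega-representation} to put $\L(\Omega_\U)$ into the hypotheses of Lemma~\ref{lem:substitutive-equivalent-conditions}, then use Lemma~\ref{lem:50tiles-2x2-in-OmegaU} to verify condition~(iv) and conclude. One minor citation issue: Lemma~\ref{lem:omega-is-primitive} only states primitivity; expansiveness of $\omega$ is established in Proposition~\ref{prop:pre-main}, so you should cite that as well (as the paper does).
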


\begin{proof}
    From Corollary~\ref{cor:OmegaU-is-aperiodic},
    we have 
    $\overline{\omega(\Omega_\U)}^{\sigma}=\Omega_\U$.
    From Lemma~\ref{lem:existence-omega-representation},
    this is equivalent to $\L(\Omega_\U) = \overline{\omega(\L(\Omega_\U))}^{Fact}$.
    From Lemma~\ref{lem:50tiles-2x2-in-OmegaU} 
    we have
    $\L(\X_\omega)\cap \U^{(2,2)} = \L(\Omega_\U)\cap \U^{(2,2)} $.
    From Lemma~\ref{lem:omega-is-primitive}
    and Proposition~\ref{prop:pre-main}, $\omega$ is primitive and expansive.
    From Lemma~\ref{lem:substitutive-equivalent-conditions},
    we conclude that $\L(\X_\omega) = \L(\Omega_\U)$,
    $\X_\omega= \Omega_\U$
    and $\Omega_\U$ is minimal.
\end{proof}

\begin{proof}[Proof of Theorem~\ref{thm:main}]
    It is proved that $\U$ is self-similar and aperiodic in
    Corollary~\ref{cor:OmegaU-is-aperiodic} and
    that $\Omega_\U$ is minimal in
    Proposition~\ref{prop:OmegaU-is-self-similar}.
\end{proof}

We deduce also that $\V$ is aperiodic from
Lemma~\ref{lem:aperiodic-implies-aperiodic} since $\alpha$ sends periodic
tilings onto periodic tilings.

\begin{proposition}
The vector of frequency of the tiles $\{u_i\}_{0\leq i\leq 18}$ in
    any tiling of plane by $\U$ is
    \[
\frac{1}{2}
    \left(
 \frac{1}{\varphi^8},
 \frac{3}{\varphi^5},
 \frac{1}{\varphi^6},
 \frac{1}{\varphi^5},
 \frac{1}{\varphi^7},
 \frac{1}{\varphi^6},
 \frac{1}{\varphi^4},
 \frac{1}{\varphi^5},
 \frac{1}{\varphi^4},
 \frac{2}{\varphi^5},
 \frac{1}{\varphi^6},
 \frac{1}{\varphi^5},
 \frac{1}{\varphi^7},
 \frac{1}{\varphi^4},
 \frac{1}{\varphi^4}+\frac{1}{\varphi^6},
 \frac{1}{\varphi^5},
 \frac{1}{\varphi^4},
 \frac{1}{\varphi^5},
 \frac{1}{\varphi^6}\right)^t.
    \]
where
\[
\frac{1}{2\varphi^4}\approx
0.0729,\quad
\frac{1}{2\varphi^5}\approx
0.0451,\quad
\frac{1}{2\varphi^6}\approx
0.0279,\quad
\frac{1}{2\varphi^7}\approx
0.0172,\quad
\frac{1}{2\varphi^8}\approx
0.0106,
\]
and
\[
\frac{2}{2\varphi^5}\approx
0.0902,\quad
\frac{3}{2\varphi^5}\approx
0.1353,\quad
\frac{1}{2\varphi^4}+\frac{1}{2\varphi^6}\approx
0.1008.
\]
\end{proposition}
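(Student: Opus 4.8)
The plan is to compute the tile frequency vector as the normalized positive right eigenvector of the incidence matrix $M$ of the primitive morphism $\omega=\alpha\beta\gamma$. Since $\Omega_\U=\X_\omega$ is minimal (Proposition~\ref{prop:OmegaU-is-self-similar}) and $\omega$ is primitive (Lemma~\ref{lem:omega-is-primitive}), uniform convergence of letter frequencies is guaranteed: for any tiling $x\in\Omega_\U$ the number of occurrences of $u_i$ in a large box, divided by the box area, converges to a limit independent of $x$. The standard argument is that $\omega$-desubstitution applied iteratively, combined with recognizability (Corollary~\ref{cor:OmegaU-is-aperiodic}), tiles the plane by level-$n$ supertiles $\omega^n(u_j)$; counting occurrences of $u_i$ inside $\omega^n(u_j)$ is exactly $(M^n)_{ij}$, and the area of $\omega^n(u_j)$ is the $j$-th coordinate of $M^n$ applied to the all-ones vector. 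Primitivity of $M$ (its $7$th power is positive) then forces, by Perron--Frobenius, that both ratios are governed by the dominant eigenvalue $\varphi^2$ and its eigenvectors.

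Concretely, I would proceed as follows. First, recall from Lemma~\ref{lem:omega-is-primitive} that the dominant eigenvalue of $M$ is $\lambda=\varphi^2=\varphi+1=(3+\sqrt5)/2$, with positive right eigenvector
\[
R=\left(1, 3\varphi^3, \varphi^2, \varphi^3, \varphi, \varphi^2, \varphi^4, \varphi^3, \varphi^4, 2\varphi^3, \varphi^2, \varphi^3, \varphi, \varphi^4, \varphi^4+\varphi^2, \varphi^3, \varphi^4, \varphi^3, \varphi^2\right)^t
\]
and positive left eigenvector $L=(1,1,\varphi,\dots,\varphi,\varphi^2,\dots,\varphi^2)$ as listed there. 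The frequency vector $\mathbf f$ is proportional to $R$, the proportionality constant being fixed by $\sum_i f_i=1$, equivalently $\mathbf f = R/\langle \mathbf 1, R\rangle$. So the second step is to evaluate $\langle \mathbf 1, R\rangle=\sum_i R_i$ using $\varphi^2=\varphi+1$; a short computation collapses this sum to $2\varphi^8$ (one checks $\sum R_i = 1 + 3\varphi^3 + \dots = 2\varphi^8$ after reducing every power modulo $\varphi^2-\varphi-1$). Dividing $R$ by $2\varphi^8$ and simplifying each entry $R_i/(2\varphi^8)$ back into the form $\frac{1}{2\varphi^{k}}$ (using $\varphi^4/\varphi^8=\varphi^{-4}$, etc., and $3\varphi^3/(2\varphi^8)=\frac{3}{2\varphi^5}$, $(\varphi^4+\varphi^2)/(2\varphi^8)=\frac{1}{2\varphi^4}+\frac{1}{2\varphi^6}$) yields exactly the stated vector. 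The numerical approximations follow from $\varphi\approx1.618$.

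The only genuine content beyond bookkeeping is justifying that letter frequencies exist and equal $R/\langle\mathbf 1,R\rangle$; I would state this as a consequence of minimality together with primitivity of the substitution, citing the hierarchical/self-similar structure established in Proposition~\ref{prop:pre-main} and Proposition~\ref{prop:OmegaU-is-self-similar}, exactly as in the classical one-dimensional case. The potential pitfall is the normalization: one must be careful that "frequency" here means frequency per unit area of $\Z^2$ (the $u_i$ are genuine unit tiles, all of the same area), so no weighting by supertile shapes enters the final answer — the area weighting is precisely what is encoded in $\langle\mathbf 1,R\rangle$, and the left eigenvector $L$ only appears if one wanted frequencies of supertiles, which we do not. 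Hence the main obstacle is simply verifying the scalar identity $\sum_i R_i = 2\varphi^8$ and matching each normalized coordinate to the displayed closed form, which is routine but must be done carefully entry by entry.
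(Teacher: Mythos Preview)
Your proposal is correct and follows the same approach as the paper: normalize the right Perron eigenvector of Lemma~\ref{lem:omega-is-primitive} by its entry sum $2\varphi^8$. The paper's proof is in fact a one-liner that simply asserts $\sum_i R_i = 2\varphi^8 = 42\varphi+26$ and divides, without your added justification for why the frequency vector exists and equals the normalized Perron eigenvector.
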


\begin{proof}
The sum of the values of the right eigenvector given in Lemma~\ref{lem:omega-is-primitive}
is $2\varphi^8=42\varphi+26$.
Therefore, the frequency vector is $\frac{1}{2\varphi^8}$ times the right eigenvector.
\end{proof}

We remark that the morphism $\omega$ on $\U$ is not
prolongable on any Wang tile but its square $\omega^2$ is prolongable on
many of them (see Figure~\ref{fig:omega_prolongable}).

\begin{figure}[h]
\begin{center}
\begin{tikzpicture}
[auto,scale=0.9]

\node (A0) at (0,0) {
\begin{tikzpicture}
\tikzstyle{every node}=[font=\tiny]
\node at (0.5, 0.5) {16};
\draw (0, 0) -- ++ (0,1);
\draw (0, 0) -- ++ (1,0);
\draw (1, 0) -- ++ (0,1);
\draw (0, 1) -- ++ (1,0);
\node[rotate=0] at (0.8, 0.5) {I};
\node[rotate=0] at (0.5, 0.8) {K};
\node[rotate=0] at (0.2, 0.5) {B};
\node[rotate=0] at (0.5, 0.2) {M};
\end{tikzpicture}
};

\node (A1) at (3.0,0) {
\begin{tikzpicture}
\tikzstyle{every node}=[font=\tiny]
\node at (0.5, 0.5) {18};
\draw (0, 0) -- ++ (0,1);
\draw (0, 0) -- ++ (1,0);
\draw (1, 0) -- ++ (0,1);
\draw (0, 1) -- ++ (1,0);
\node[rotate=0] at (0.8, 0.5) {C};
\node[rotate=0] at (0.5, 0.8) {N};
\node[rotate=0] at (0.2, 0.5) {I};
\node[rotate=0] at (0.5, 0.2) {P};
\node at (0.5, 1.5) {5};
\draw (0, 1) -- ++ (0,1);
\draw (0, 1) -- ++ (1,0);
\draw (1, 1) -- ++ (0,1);
\draw (0, 2) -- ++ (1,0);
\node[rotate=0] at (0.8, 1.5) {H};
\node[rotate=0] at (0.5, 1.8) {P};
\node[rotate=0] at (0.2, 1.5) {H};
\node[rotate=0] at (0.5, 1.2) {N};
\node at (1.5, 0.5) {10};
\draw (1, 0) -- ++ (0,1);
\draw (1, 0) -- ++ (1,0);
\draw (2, 0) -- ++ (0,1);
\draw (1, 1) -- ++ (1,0);
\node[rotate=0] at (1.8, 0.5) {G};
\node[rotate=0] at (1.5, 0.8) {L};
\node[rotate=0] at (1.2, 0.5) {C};
\node[rotate=0] at (1.5, 0.2) {L};
\node at (1.5, 1.5) {1};
\draw (1, 1) -- ++ (0,1);
\draw (1, 1) -- ++ (1,0);
\draw (2, 1) -- ++ (0,1);
\draw (1, 2) -- ++ (1,0);
\node[rotate=0] at (1.8, 1.5) {F};
\node[rotate=0] at (1.5, 1.8) {O};
\node[rotate=0] at (1.2, 1.5) {H};
\node[rotate=0] at (1.5, 1.2) {L};
\end{tikzpicture}};

\node (A2) at (7.0,0) {
\begin{tikzpicture}
\tikzstyle{every node}=[font=\tiny]
\node at (0.5, 0.5) {14};
\draw (0, 0) -- ++ (0,1);
\draw (0, 0) -- ++ (1,0);
\draw (1, 0) -- ++ (0,1);
\draw (0, 1) -- ++ (1,0);
\node[rotate=0] at (0.8, 0.5) {I};
\node[rotate=0] at (0.5, 0.8) {P};
\node[rotate=0] at (0.2, 0.5) {G};
\node[rotate=0] at (0.5, 0.2) {K};
\node at (0.5, 1.5) {2};
\draw (0, 1) -- ++ (0,1);
\draw (0, 1) -- ++ (1,0);
\draw (1, 1) -- ++ (0,1);
\draw (0, 2) -- ++ (1,0);
\node[rotate=0] at (0.8, 1.5) {J};
\node[rotate=0] at (0.5, 1.8) {M};
\node[rotate=0] at (0.2, 1.5) {F};
\node[rotate=0] at (0.5, 1.2) {P};
\draw[fill=lightgray] (0, 2) rectangle (1, 3);
\node at (0.5, 2.5) {16};
\draw (0, 2) -- ++ (0,1);
\draw (0, 2) -- ++ (1,0);
\draw (1, 2) -- ++ (0,1);
\draw (0, 3) -- ++ (1,0);
\node[rotate=0] at (0.8, 2.5) {I};
\node[rotate=0] at (0.5, 2.8) {K};
\node[rotate=0] at (0.2, 2.5) {B};
\node[rotate=0] at (0.5, 2.2) {M};
\node at (1.5, 0.5) {8};
\draw (1, 0) -- ++ (0,1);
\draw (1, 0) -- ++ (1,0);
\draw (2, 0) -- ++ (0,1);
\draw (1, 1) -- ++ (1,0);
\node[rotate=0] at (1.8, 0.5) {B};
\node[rotate=0] at (1.5, 0.8) {O};
\node[rotate=0] at (1.2, 0.5) {I};
\node[rotate=0] at (1.5, 0.2) {O};
\node at (1.5, 1.5) {0};
\draw (1, 1) -- ++ (0,1);
\draw (1, 1) -- ++ (1,0);
\draw (2, 1) -- ++ (0,1);
\draw (1, 2) -- ++ (1,0);
\node[rotate=0] at (1.8, 1.5) {F};
\node[rotate=0] at (1.5, 1.8) {O};
\node[rotate=0] at (1.2, 1.5) {J};
\node[rotate=0] at (1.5, 1.2) {O};
\node at (1.5, 2.5) {8};
\draw (1, 2) -- ++ (0,1);
\draw (1, 2) -- ++ (1,0);
\draw (2, 2) -- ++ (0,1);
\draw (1, 3) -- ++ (1,0);
\node[rotate=0] at (1.8, 2.5) {B};
\node[rotate=0] at (1.5, 2.8) {O};
\node[rotate=0] at (1.2, 2.5) {I};
\node[rotate=0] at (1.5, 2.2) {O};
\draw[fill=lightgray] (2, 0) rectangle (3, 1);
\node at (2.5, 0.5) {16};
\draw (2, 0) -- ++ (0,1);
\draw (2, 0) -- ++ (1,0);
\draw (3, 0) -- ++ (0,1);
\draw (2, 1) -- ++ (1,0);
\node[rotate=0] at (2.8, 0.5) {I};
\node[rotate=0] at (2.5, 0.8) {K};
\node[rotate=0] at (2.2, 0.5) {B};
\node[rotate=0] at (2.5, 0.2) {M};
\node at (2.5, 1.5) {3};
\draw (2, 1) -- ++ (0,1);
\draw (2, 1) -- ++ (1,0);
\draw (3, 1) -- ++ (0,1);
\draw (2, 2) -- ++ (1,0);
\node[rotate=0] at (2.8, 1.5) {D};
\node[rotate=0] at (2.5, 1.8) {M};
\node[rotate=0] at (2.2, 1.5) {F};
\node[rotate=0] at (2.5, 1.2) {K};
\draw[fill=lightgray] (2, 2) rectangle (3, 3);
\node at (2.5, 2.5) {16};
\draw (2, 2) -- ++ (0,1);
\draw (2, 2) -- ++ (1,0);
\draw (3, 2) -- ++ (0,1);
\draw (2, 3) -- ++ (1,0);
\node[rotate=0] at (2.8, 2.5) {I};
\node[rotate=0] at (2.5, 2.8) {K};
\node[rotate=0] at (2.2, 2.5) {B};
\node[rotate=0] at (2.5, 2.2) {M};
\end{tikzpicture}};


\draw[->] (A0) edge node {$\omega$} (A1);
\draw[->] (A1) edge node {$\omega$} (A2);

\end{tikzpicture}
\end{center}
\caption{The gray background color identify tiles where the morphism 
    $\omega^2:\Omega_\U\to\Omega_\U$ is prolongable on the initial tile
    $u_{16}$.}
\label{fig:omega_prolongable}
\end{figure}
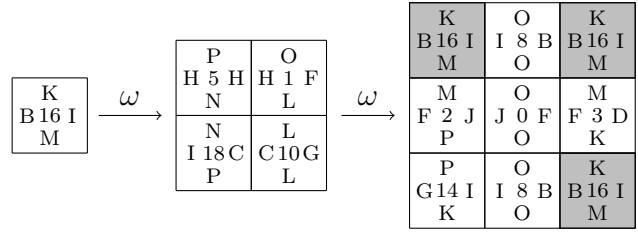

\section{A stone inflation}\label{sec:stone}

In this short section, we illustrate how the substitution $\omega$ can be seen
as a \emph{stone inflation} \cite[p.  147--148]{MR3136260}.
We consider the following correspondence between colors of the tile set $\U$
and length of edges:
\begin{align*}
    &\{A,B,C,E,G,I\} &\mapsto &&\text{ vertical edge of length } 1\\
    &\{D,F,H,J\}     &\mapsto &&\text{ vertical edge of length } \varphi^{-1}\\
    &\{K,M,N,P\} &\mapsto &&\text{ horizontal edge of length } 1\\
    &\{L,O\}     &\mapsto &&\text{ horizontal edge of length }\varphi^{-1}
\end{align*}
To each tile in $\U$ we associate a rectangle $\subset\R^2$ according to the rule
\[
\textsc{rectangle}(u_i) = 
\begin{cases}
[0,\varphi^{-1}]\times[0,\varphi^{-1}]  & \text{ if } i \in \{0,1\},\\
[0,1]\times[0,\varphi^{-1}]  & \text{ if } i \in \{2,3,4,5,6,7\},\\
[0,\varphi^{-1}]\times[0,1]  & \text{ if } i \in \{8,9,10,11\},\\
[0,1]\times[0,1]             & \text{ if } i \in \{12,13,14,15,16,17,18\}.\\
\end{cases}
\]
The relative area of each rectangle is proportional to the left eigenvector of the incidence matrix of $\omega$ (Lemma~\ref{lem:omega-is-primitive}).
Then the substitution $\omega:\Omega_\U\to\Omega_\U$ defines a
stone inflation on the finite set of tiles $\{\textsc{rectangle}(u_i)\}_{0\leq
i\leq 18}$ with expansion factor $\varphi=(1+\sqrt{5})/2$
(see Figure~\ref{fig:stoneinflation}).

\begin{figure}[h]
\begin{center}
\newcommand\SMALLSQUARE[2]{
\begin{tikzpicture}
\node (A) at (0,0) {
\begin{tikzpicture}
    \draw (0,0) rectangle node {$u_{#1}$} (1/\p,1/\p);
\end{tikzpicture}};
\node (B) at (\p,0) {
\begin{tikzpicture}
\draw (0,0) rectangle (1,1);
\node at (.5,.5)  {$u_{#2}$};
\end{tikzpicture}};
\draw[-to,thick] (A) to (B);
\end{tikzpicture}}

\newcommand\HDOMINO[3]{
\begin{tikzpicture}
\node (A) at (0,0) {
\begin{tikzpicture}
\draw (0,0) rectangle node {$u_{#1}$} (1,1/\p);
\end{tikzpicture}};
\node (B) at (2,0) {
\begin{tikzpicture}
\draw (0,0) rectangle (\p,1);
\draw (1,0) -- (1,1);
\node at (.5,.5) {$u_{#2}$};
\node at (1.3,.5) {$u_{#3}$};
\end{tikzpicture}};
\draw[-to,thick] (A) to (B);
\end{tikzpicture}}

\newcommand\VDOMINO[3]{
\begin{tikzpicture}
\node (A) at (0,0) {
\begin{tikzpicture}
\draw (0,0) rectangle node {$u_{#1}$} (1/\p,1);
\end{tikzpicture}};
\node (B) at (\p,0) {
\begin{tikzpicture}
\draw (0,0) rectangle (1,\p);
\draw (0,1) -- (1,1);
\node at (.5,1.3) {$u_{#3}$};
\node at (.5,.5)  {$u_{#2}$};
\end{tikzpicture}};
\draw[-to,thick] (A) to (B);
\end{tikzpicture}}

\newcommand\BIGSQUARE[5]{
\begin{tikzpicture}
\node (A) at (0,0) {
\begin{tikzpicture}
    \draw (0,0) rectangle node {$u_{#1}$} (1,1);
\end{tikzpicture}};
\node (B) at (2,0) {
\begin{tikzpicture}
\draw (0,0) rectangle (\p,\p);
\draw (1,0) -- (1,\p);
\draw (0,1) -- (\p,1);
\node at (.5,.5) {$u_{#2}$};
\node at (.5,1.3) {$u_{#3}$};
\node at (1.3,.5) {$u_{#4}$};
\node at (1.3,1.3) {$u_{#5}$};
\end{tikzpicture}};
\draw[-to,thick] (A) to (B);
\end{tikzpicture}}

\begin{tikzpicture}
\matrix{
\node{\SMALLSQUARE{0}{17}}; &
\node{\SMALLSQUARE{1}{16}}; &
\node{\HDOMINO{2}{15}{11}}; &
\node{\HDOMINO{3}{13}{9}}; \\[-3mm]
\node{\HDOMINO{4}{17}{8}}; &
\node{\HDOMINO{5}{16}{8}}; &
\node{\HDOMINO{6}{15}{8}}; &
\node{\HDOMINO{7}{14}{8}}; \\[-3mm]
\node{\VDOMINO{8}{14}{6}}; &
\node{\VDOMINO{9}{17}{3}}; &
\node{\VDOMINO{10}{16}{3}}; &
\node{\VDOMINO{11}{14}{2}}; \\[-3mm]
\node{\BIGSQUARE{12}{15}{7}{11}{1}}; &
\node{\BIGSQUARE{13}{14}{6}{11}{1}}; &
\node{\BIGSQUARE{14}{13}{7}{9}{1}}; &
\node{\BIGSQUARE{15}{12}{6}{9}{1}}; \\[-3mm]
\node{\BIGSQUARE{16}{18}{5}{10}{1}}; &
\node{\BIGSQUARE{17}{13}{4}{9}{1}}; &
\node{\BIGSQUARE{18}{14}{2}{8}{0}}; \\
};
\end{tikzpicture}
\end{center}
\caption{The morphism $\omega$ seen as a stone inflation.}
\label{fig:stoneinflation}
\end{figure}

\section{Proofs of lemmas based on Sage}\label{sec:appendix}

This section gathers proofs 
of Lemma~\ref{lem:dominoes-direction2-U},
Lemma~\ref{lem:dominoes-direction1-V},
Lemma~\ref{lem:omega-is-primitive} and
Lemma~\ref{lem:50tiles-2x2-in-OmegaU}
made using version 8.2 of Sage \cite{sagemathv8.2}
together with the optional Sage package \texttt{slabbe-0.4.2}
\cite{labbe_slabbe_0_4_2_2018} which can be installed by running the command
\texttt{sage -pip install slabbe}.
The computations will be faster if the Gurobi linear program solver
\cite{gurobi} is installed and available in Sage.
First we import the necessary libraries from \texttt{slabbe}:
{\footnotesize
\begin{verbatim}
    sage: from slabbe import WangTileSet, TikzPicture
\end{verbatim}
}
\noindent
We create the tile set \texttt{U}:
{\footnotesize
\begin{verbatim}
    sage: L = ['FOJO', 'FOHL', 'JMFP', 'DMFK', 'HPJP', 'HPHN', 'HKFP', 'HKDP', 'BOIO',
    ....: 'GLEO', 'GLCL', 'ALIO', 'EPGP', 'EPIP', 'IPGK', 'IPIK', 'IKBM', 'IKAK', 'CNIP']
    sage: U = WangTileSet(L); U
    Wang tile set of cardinality 19
\end{verbatim}}
\noindent
This allows to create the transducer graph of $\U$ shown at
Figure~\ref{fig:T7transducer}.
{\footnotesize
\begin{verbatim}
    sage: G = U.to_transducer_graph(); G
    Looped digraph on 10 vertices
    sage: TikzPicture.from_graph(G).pdf()
\end{verbatim}}

\begin{proof}[Proof of Lemma~\ref{lem:dominoes-direction2-U}]
    We compute the number of dominoes in the direction $\be_2$ allowing a
    surrounding of radius $1,2,3$ in $\Omega_\U$. We obtain 35 dominoes in the
    direction $\be_2$  allowing a surrounding of radius $2$ and $3$.
{\footnotesize
\begin{verbatim}
    sage: [len(U.dominoes_with_surrounding(i=2,radius=r)) for r in [1,2,3]]
    [37, 35, 35]
    sage: U.dominoes_with_surrounding(i=2,radius=2)
    {(0, 8), (1, 8), (1, 9), (1, 11), (2, 16), (3, 16), (4, 13), (5, 13), (6, 14),
     (6, 17), (7, 15), (8, 0), (8, 9), (8, 11), (9, 1), (9, 10), (10, 1), (11, 1),
     (11, 10), (12, 6), (13, 4), (13, 7), (13, 18), (14, 2), (14, 6), (14, 12), (15, 7),
     (15, 13), (15, 18), (16, 3), (16, 14), (16, 17), (17, 3), (17, 14), (18, 5)}
\end{verbatim}}
\end{proof}

\noindent
Now we create the tile set \texttt{V}:
{\footnotesize
\begin{verbatim}
    sage: L = [('A','L','I','O'), ('B','O','I','O'), ('E','P','I','P'), 
    ....:  ('G','L','E','O'), ('I','K','A','K'), ('I','K','B','M'),
    ....:  ('I','P','G','K'), ('I','P','I','K'), ('AF','O','IH','O'),
    ....:  ('BF','O','IJ','O'), ('CH','P','IH','P'), ('EH','K','GF','P'),
    ....:  ('EH','K','ID','P'), ('EH','P','IJ','P'), ('GF','O','CH','L'),
    ....:  ('GF','O','EH','O'), ('ID','M','AF','K'), ('ID','M','BF','M'),
    ....:  ('IH','K','GF','K'), ('IH','K','ID','K'), ('IJ','M','GF','K')]
    sage: V = WangTileSet(L); V
    Wang tile set of cardinality 21
\end{verbatim}}

\begin{proof}[Proof of Lemma~\ref{lem:dominoes-direction1-V}]
    We compute the number of dominoes in the direction $\be_1$ allowing a
    surrounding of radius $1,2$ in $\Omega_\V$. We obtain 30 dominoes in the
    direction $\be_1$ allowing a surrounding of radius $1$ and $2$.
{\footnotesize
\begin{verbatim}
    sage: [len(V.dominoes_with_surrounding(i=1,radius=r)) for r in [1,2]]
    [30, 30]
    sage: V.dominoes_with_surrounding(i=1,radius=1)
    {(0, 4), (1, 5), (2, 3), (3, 6), (4, 1), (4, 2), (5, 1), (5, 2), (5, 7), (6, 1),
     (7, 0), (7, 1), (8, 16), (9, 17), (10, 14), (11, 15), (12, 15), (13, 15),
     (14, 11), (14, 18), (15, 18), (15, 20), (16, 12), (17, 12), (17, 19), (18, 8),
     (18, 10), (19, 8), (20, 9), (20, 13)}
\end{verbatim}}
\end{proof}

\begin{proof}[Proof of Lemma~\ref{lem:omega-is-primitive}]
We create the matrix \texttt{M} in Sage.
The characteristic polynomial, the eigenvalues and the eigenvectors of
\texttt{M} can be computed easily in Sage. 
After changing the ring of the
matrix to the number field containing $\varphi$, we obtain the result stated in
the proposition.
First we create the substitution $\omega$:
{\footnotesize
\begin{verbatim}
    sage: from slabbe import Substitution2d
    sage: da = {0: [[11]], 1: [[8]], 2: [[13]], 3: [[9]], 4: [[17]], 5: [[16]], 6: [[14]],
    ....:       7: [[15]], 8: [[11, 1]], 9: [[8, 0]], 10: [[18, 5]], 11: [[12, 6]],
    ....:       12: [[13, 7]], 13: [[13, 4]], 14: [[10, 1]], 15: [[9, 1]], 16: [[17, 3]],
    ....:       17: [[16, 3]], 18: [[14, 6]], 19: [[15, 7]], 20: [[14, 2]]}
    sage: alpha = Substitution2d(da)
    sage: db = {0: [[4]], 1: [[5]], 2: [[7], [0]], 3: [[2], [3]], 4: [[4], [1]],
    ....:  5: [[5], [1]], 6: [[7], [1]], 7: [[6], [1]], 8: [[18]], 9: [[16]],
    ....:  10: [[17]], 11: [[20]], 12: [[19], [8]], 13: [[18], [8]], 14: [[12], [15]],
    ....:  15: [[11], [15]], 16: [[10], [14]], 17: [[13], [15]], 18: [[20], [9]]}
    sage: beta = Substitution2d(db)
    sage: omega = alpha*beta
\end{verbatim}}
Then we create its incidence matrix, we check it is primitive, we compute it
characteristic polynomial, its eigenvalues and left and right eigenvectors:
{\footnotesize
\begin{verbatim}
    sage: M = matrix(omega)
    sage: from slabbe.matrices import is_primitive
    sage: is_primitive(M)
    True
    sage: M.charpoly().factor()
    x^3 * (x - 1)^4 * (x + 1)^4 * (x^2 - 3*x + 1) * (x^2 + x - 1)^3
    sage: z = polygen(QQ, 'z')
    sage: K.<phi> = NumberField(z**2-z-1, 'phi', embedding=RR(1.6))
    sage: MK = M.change_ring(K)
    sage: MK.eigenvalues()
    [phi + 1, -phi + 2, phi - 1, phi - 1, phi - 1, 0, 0, 0, -phi, -phi, -phi, 
        1, 1, 1, 1, -1, -1, -1, -1]
    sage: MK.eigenvectors_right()[0][1][0]
    (1, 6*phi + 3, phi + 1, 2*phi + 1, phi, phi + 1, 3*phi + 2, 2*phi + 1,
     3*phi + 2, 4*phi + 2, phi + 1, 2*phi + 1, phi, 3*phi + 2, 4*phi + 3, 2*phi + 1,
     3*phi + 2, 2*phi + 1, phi + 1)
    sage: MK.eigenvectors_left()[0][1][0]
    (1, 1, phi, phi, phi, phi, phi, phi, phi, phi, phi, phi,
     phi + 1, phi + 1, phi + 1, phi + 1, phi + 1, phi + 1, phi + 1)
\end{verbatim}}\qedhere
\end{proof}


\begin{proof}
    [Proof of Lemma~\ref{lem:50tiles-2x2-in-OmegaU}]
From Corollary~\ref{cor:OmegaU-is-aperiodic},
we know that $\L(\Omega_\U) = \omega(\L(\Omega_\U))$.
Therefore from Lemma~\ref{lem:substitutive-equivalent-conditions},
    we have $\L_\omega\subseteq\L(\Omega_\U)$.

    Now we show using Sage that $\L(\Omega_\U)\cap \U^{(2,2)}\subseteq S$.
    First we compute $U\boxminus^1\U$. It contains 35 tiles (after recursively
    removing any source or sink state).
    Then we compute $(U\boxminus^1\U)\boxminus^2(U\boxminus^1\U)$ for which
    the resulting transducer after recursively
    removing any source or sink state contains 55
    transitions (or tiles).
    Among them, 5 can not be surrounded, that is, there is no valid tiling of a
    $3\times 3$ rectangle with them in the middle.
    For each tile $t$ of the remaining 50 tiles, we compute all valid ways 
    of writing $t=(u\boxminus^1 v)\boxminus^2(w\boxminus^1 z)$
    with $u,v,w,z\in\U$. It turns out that there is a unique way in each case
    leading to a set $S\subset\U^{(2,2)}$ of 50 subwords of shape $(2,2)$
    satisfying $\L(\Omega_\U)\cap \U^{(2,2)}\subseteq S$.
    The following takes 4s if using \verb|solver='Gurobi'| \cite{gurobi}
    and 4 min if using \verb|solver='dancing_links'|:
{\footnotesize
\begin{verbatim}
    sage: tilings = U.tiling_with_surrounding(2,2,radius=1,solver='Gurobi')
    sage: len(tilings)
    50
    sage: S = sorted(t.table() for t in tilings)
    sage: [matrix.column([col[::-1] for col in s]) for s in S]
    [
    [ 8 16]  [ 8 16]  [ 8 16]  [ 9 14]  [11 17]  [16  8]  [16 13]  [16 15]
    [ 0  3], [ 1  2], [ 1  3], [ 1  6], [ 1  6], [ 2  0], [ 2  4], [ 3  7],
    
    [13  9]  [13  9]  [14  8]  [14 11]  [14 13]  [17  8]  [17 13]  [15  8]
    [ 4  1], [ 5  1], [ 6  1], [ 6  1], [ 6  5], [ 6  1], [ 6  5], [ 7  1],
    
    [15 11]  [ 0  3]  [ 9 14]  [11 17]  [ 1  2]  [ 1  6]  [10 12]  [ 1  6]
    [ 7  1], [ 8 16], [ 8 16], [ 8 16], [ 9 14], [ 9 14], [ 9 14], [10 12],
    
    [ 1  6]  [ 1  3]  [10 14]  [ 6  1]  [ 4  1]  [ 7  1]  [18 10]  [ 2  0]
    [10 14], [11 17], [11 17], [12  9], [13  9], [13  9], [13  9], [14  8],
    
    [ 2  4]  [ 6  1]  [ 6  5]  [12  9]  [ 7  1]  [13  9]  [18 10]  [ 3  7]
    [14 13], [14 11], [14 18], [14  8], [15 11], [15  8], [15 11], [16 13],
    
    [ 3  7]  [14 11]  [14 18]  [14 13]  [14 18]  [17 13]  [ 3  7]  [14 11]
    [16 15], [16  8], [16 13], [16 15], [16 15], [16 15], [17 13], [17  8],
    
    [14 18]  [ 5  1]
    [17 13], [18 10]
    ]
\end{verbatim}}

Now we show using Sage that $S\subseteq\L_\omega$.
We compute the set $\L_\omega\cap \U^{(2,2)}$ using Sage.
{\footnotesize
\begin{verbatim}
    sage: F = omega.list_2x2_factors()
    sage: len(F)
    50
    sage: sorted(F) == S
    True
\end{verbatim}}
Thus we have shown 
$S\subseteq\L_\omega\cap \U^{(2,2)}
  \subseteq\L(\Omega_\U)\cap \U^{(2,2)}\subseteq S$.
\end{proof}

\section{Perspectives}

In this contribution, we have chosen to study the substitutive structure
of $\Omega_\U$ as $\alpha\circ\beta\circ\gamma$ where $\alpha$ involves
vertical dominoes, $\beta$ involves horizontal dominoes and $\gamma$ is a
bijection. We could have done it the other way around as
$\alpha'\circ\beta'\circ\gamma'$ where $\alpha'$ involves horizontal dominoes,
$\beta'$ involves vertical dominoes and $\gamma'$ is some other bijection as
shown below. It would be interesting to investigate all of these possible sequences
of substitutions describing $\Omega_\U$.
\begin{center}
\begin{tikzpicture}[xscale=1.5,yscale=.4,auto]
    \node (U) at (0,0) {$\Omega_\U$};
    \node (V) at (1,1) {$\Omega_\V$};
    \node (V2) at (1,-1) {$\Omega_{\V'}$};
    \node (W) at (2,1) {$\Omega_\W$};
    \node (W2) at (2,-1) {$\Omega_{\W'}$};
    \node (U2) at (3,0) {$\Omega_\U$};
    \draw[to-,very thick] (U) to node {$\alpha$} (V);
    \draw[to-,very thick] (V) to node {$\beta$} (W);
    \draw[to-,very thick] (W) to node {$\gamma$} (U2);
    \draw[to-,very thick] (U) to node[swap] {$\alpha'$} (V2);
    \draw[to-,very thick] (V2) to node[swap] {$\beta'$} (W2);
    \draw[to-,very thick] (W2) to node[swap] {$\gamma'$} (U2);
\end{tikzpicture}
\end{center}

In \cite[p. 595]{MR857454}, an aperiodic set of 16 Wang tiles is deduced from
the Ammann tiling A2 \cite{MR1156132,akiyama_note_2012} without much details.
A substitution showing its self-similar structure is also given with a
structure very close to the $2$-dimensional morphism
$\omega:\Omega_\U\to\Omega_\U$. Is there a link between Ammann tilings A2 and
$\Omega_\U$? Can we find a concave hexagonal shape with decorations related
to $\Omega_\U$? Can we factor the substitution as product of morphisms sending letters to letters or dominoes? The previous question is
related to the notion of elementary substitutions in D0L systems.

Many one-dimensional properties of $\Omega_\U$ can be extracted from the
morphism $\omega$. For example, considering the tiles $u_1$, $u_5$ and $u_6$,
one can see that the effect of $\omega^2$ on the indices of tiles is
\[
1\mapsto \left(\begin{array}{rr}
5 & 1 \\
18 & 10
\end{array}\right),
5\mapsto \left(\begin{array}{rrr}
5 & 1 & 6 \\
18 & 10 & 14
\end{array}\right),
6\mapsto \left(\begin{array}{rrr}
6 & 1 & 6 \\
12 & 9 & 14
\end{array}\right)
\]
which contains the one-dimensional morphism:
$\mu:1 \mapsto 51,5 \mapsto 516,6 \mapsto 616$.
This can be used to show that there exist tilings in $\Omega_\U$ that contains
rows using only the tiles in the set $\{u_1, u_5, u_6\}$.

As we have seen $\omega^2$ is prolongable on many letters thus it admits many
fixed point $x\in\Omega_\U\subset\U^{\Z^2}$.  Such fixed point were called
\emph{shape-symmetric} $d$-dimensional word by Arnaud Maes when the image of
every letter on the diagonal are squares.  In \cite{MR2579856} (see also
\cite[Chapter 4]{charlier_abstract_2009}), it is proved that a multidimensional
infinite word $x:\N^d\to\A$ over a finite alphabet $\A$ is the image by a coding
of a shape-symmetric infinite word if and only if $x$ is $S$-automatic for some
abstract numeration system $S$ built on a regular language containing the empty
word. Recall that an infinite word is $S$-automatic if, for all $n\geq0$, its
$(n+1)$-th letter is the output of a deterministic automaton fed with the
representation of $n$ in the numeration system $S$. Can we find the numeration
system $S$ and the deterministic automaton associated to $\Omega_\U$?

As noticed by the anonymous referee, the factor complexity of the language of
$\Omega_\U$ is quadratic, the first values for the number of distinct
$2$-dimensional word of shape $(n,n)$ for $n\geq 1$ being: 19, 50, 94, 154,
229, 317, 420. Tilings generated by expansive and primitive are \emph{linearly
repetitive} (which means that there is some $C>0$ for which every patch of
radius $r$ is found somewhere in every ball of radius $Cr$) and one can show
that a linearly repetitive tiling of $\R^d$ must have factor complexity bounded
by a constant times $r^d$.


\section*{Acknowledgements}

I want to thank Michaël Rao for the talk he made (Combinatorics on words and
tilings, CRM, Montréal, April 2017) from which this work is originated.  I want
to thank Vincent Delecroix for many helpful discussions at LaBRI in Bordeaux
during the preparation of this article. I am also thankful to J\"org
Thuswaldner, Henk Bruin, for inviting me to present this work (Substitutions
and tiling spaces, University of Vienna, September 2017) and to Pierre Arnoux
and Shigeki Akiyama for the same reason (Tiling and Recurrence, CIRM,
Marseille, December 2017). I want to thank Michael Baake for very
helpful discussions on inflation rules and model sets and Shigeki Akiyama for
its enthusiasm toward this project.

The author is grateful to the comments of the referee which leaded to a great
improvement in the presentation while reducing its size and simplifying many
technical proofs into simpler ones. I also wish to thank David Renault for his
careful reading of a preliminary version and his valuable comments.

I acknowledge financial support from the Laboratoire International
Franco-Québécois de Recherche en Combinatoire (LIRCO), the Agence Nationale de
la Recherche ``Dynamique des algorithmes du pgcd : une approche Algorithmique,
Analytique, Arithmétique et Symbolique (Dyna3S)'' (ANR-13-BS02-0003) and the
Horizon  2020  European  Research  Infrastructure  project  OpenDreamKit
(676541).

\bibliographystyle{myalpha} 
\bibliography{../biblio}


\end{document}